\errorcontextlines10
\documentclass[a4paper,10pt,reqno]{amsart}

\usepackage{graphicx,color}
\usepackage{pinlabel-test}
\usepackage[centertags]{amsmath}
\usepackage{amsfonts,amssymb,amsthm,amsmath}
\usepackage{float,longtable}
\usepackage{a4wide}
\usepackage[latin1]{inputenc}
\usepackage{newlfont}
\usepackage{booktabs}
\usepackage{textcomp}
\usepackage{subfigure}

 \theoremstyle{plain}
 \begingroup
 \newtheorem{thm}{Theorem}[section]
 \newtheorem{lem}[thm]{Lemma}
 
 \newtheorem{cor}[thm]{Corollary}
 
 \endgroup

 \theoremstyle{definition}
 \begingroup

 \endgroup

 \theoremstyle{remark}
 \begingroup
 \newtheorem{rem}[thm]{Remark}
 \endgroup

 \numberwithin{equation}{section}



\let\E\eub

\newcommand{\R}{{\mathbb R}}

\def\BVloc{BV_{\mathrm{loc}}}

\def\un#1{\mathop{\textrm{#1}}}  
\def\weakto{\rightharpoonup}

\let\longweakto\longrightharpoonup
\long\def\drop#1{}
\def\wmm{\mu_n\boxtimes\mu_n}
\def\dom{\Omega}
\def\doms{{\Omega^2}}
\def\M{\mathcal M}

\DeclareRobustCommand{\1}[1]{\ensuremath \mathbf{1}_{\{#1\}}}
\DeclareMathOperator{\supp}{supp}
\let\x\xi
\def\d{\mathrm d}
\def\Lebesgue{\mathcal L}
\def\bx{\mathbf x}
\def\by{\mathbf y}


\title[Dislocation Density]
{Asymptotic behaviour of a pile-up of infinite walls\\ of edge dislocations}

\author[]{M.G.D. Geers$^2$}
\author[]{R.H.J. Peerlings$^2$}
\author[]{M.A. Peletier$^{3,4}$}
\author[]{L. Scardia$^{1,2,3}$}

\begin{document}

\begin{abstract}
We consider a system of parallel straight edge dislocations and we analyse its asymptotic behaviour in the limit of many dislocations. The dislocations are represented by points in a plane, and they are arranged in vertical \emph{walls}; each wall is free to move in the horizontal direction. The system is described by a discrete energy depending on the one-dimensional horizontal positions $x_i>0$ of the $n$ walls; the energy contains contributions from repulsive pairwise interactions between all walls, a global shear stress forcing the walls to the left, and a pinned wall at $x=0$ that prevents the walls from leaving through the left boundary.

We study the behaviour of the energy as the number $n$ of walls tends to infinity, and characterise this behaviour in terms of $\Gamma$-convergence. There are five different cases, depending on the asymptotic behaviour of the single dimensionless parameter $\beta_n$, corresponding to $\beta_n\ll 1/n$, $1/n\ll\beta_n \ll 1$, and $\beta_n\gg1$, and the two critical regimes $\beta_n\sim 1/n$ and $\beta_n\sim 1$.
As a consequence we obtain characterisations of the limiting behaviour of stationary states in each of these five regimes.

The results shed new light on the open problem of upscaling large numbers of dislocations. We show how various existing upscaled models arise as special cases of the theorems of this paper. The wide variety of behaviour suggests that upscaled models should incorporate more information than just dislocation densities. This additional information is encoded in the limit of the dimensionless parameter $\beta_n$.
\end{abstract}

\maketitle

\footnotetext[1]{Materials innovation institute (M2i)}
\footnotetext[2]{Department of Mechanical Engineering, Technische Universiteit Eindhoven}
\footnotetext[3]{Department of Mathematics and Computer Sciences, Technische Universiteit Eindhoven}
\footnotetext[4]{Institute for Complex Molecular Systems, Technische Universiteit Eindhoven}

\section{Introduction}

\subsection{Dislocation plasticity}


One of the hard open problems in engineering is the upscaling of large numbers of \emph{dislocations}. Dislocations are defects in the crystal lattice of a metal, and their collective motion gives rise to macroscopic permanent or \emph{plastic} deformation.

For systems of millimeter-size or larger there is a fairly complete theory of macroscopic plasticity, in which dislocations are not modelled explictly (see e.g.~\cite{Hill66,HillRice72,HillHavner82,AsaroRice77}). For smaller systems, however, the so-called \emph{size effect}~\cite{Hall51a,Petch53,FleckMullerAshbyHutchinson94} suggests that it is necessary to take the distribution of dislocations into account. In this point of view the size effect arises when the length scale of the system becomes similar to the typical scale at which the \emph{dislocation density} varies.

To address these small-scale effects a number of competing (mainly phenomenological) \emph{dislocation density models} have been derived by upscaling large numbers of dislocations (e.g.~\cite{DengEl-Azab09,El-Azab00,Groma97,GromaBalogh99,GromaCsikorZaiser03,LimkumnerdVan-der-Giessen08,RoyAcharya06}). The unknowns in this type of model are various types of dislocation densities, whose evolution in time is described via conservation laws equipped with constitutive laws both for the velocity of the dislocations and for their interaction.

The use of densities (as opposed to keeping track of the behaviour of each dislocation) seems reasonable, since the typical number of dislocations in a portion of metal is huge. For topological reasons dislocations are \emph{curves} in three-dimensional space, and therefore the density of  dislocations has dimensions of $\text{length}/\text{volume}$ or $\mathrm m^{-2}$. A  dislocation density of $10^{15}\un{m}^{-2}$ (typical for cold-rolled metal~\cite[p.~20]{HullBacon01}) translates into $1000\un{km}$ of dislocation curve in a cubic millimeter of metal. This high number explains the interest in avoiding the description of the individual behaviour of each dislocation, and focussing on the collective behaviour instead. It also explains the general belief that this should be possible.

\medskip

The research done here, however, suggests that the situation is more subtle. It was triggered by the earlier study~\cite{RoyPeerlingsGeersKasyanyuk08}. The outcome of~\cite{RoyPeerlingsGeersKasyanyuk08} and the results in this paper suggest that the dislocation density alone is \emph{not capable} of describing the evolution of large numbers of dislocations.
To put it succinctly, a density simply does not contain enough information to characterise the behaviour of the system, even in aggregate form. This is because the density only characterizes the local \emph{number} of dislocations per unit area, and needs to be supplemented with more information on their \emph{spatial arrangement} in order to give a satisfactory answer.
We show below how this point arises from the results of this paper.

A separate reason for the analysis of this paper is the uncommon form of the energy.
Although it is a simple two-point interaction energy in one spatial dimension, the  behaviour in the many-dislocation limit does not fit into any of the standard cases as described e.g.\ in~\cite{BraidesMaria06}. This is due to the combination of all-neighbours-interaction (each pair interaction is counted, regardless of distance), and an interaction potential that is globally repulsive.

\subsection{The model of this paper}
We consider a system of pure edge dislocations whose dislocation lines are straight and parallel to one another as in~\cite{RoyPeerlingsGeersKasyanyuk08}. These dislocations can be modelled as points in the plane orthogonal to the direction of the dislocation lines, and this identification has been done systematically in the literature. The \emph{slip planes} are horizontal, i.e.\ parallel to the $\tilde{x}$-coordinate, which implies that the dislocations can only move in the horizontal direction (see Figure~\ref{Wall}). In addition, the dislocations are organized in vertical \emph{walls} with a uniform spacing of size~$h$ (in $\un m$). In the model below there will be a finite number~$n$ of such walls, which each will extend indefinitely in the vertical direction. The total degrees of freedom of the system are therefore the horizontal positions $0\leq\tilde x_1 \leq \dots\leq \tilde x_n$ (in $\un m$) of the walls. A constant global shear stress forces the walls towards a fixed barrier which is modelled as an infinite wall of pinned dislocations at $\tilde{x}_0=0$.
\begin{figure}[htbp]
\labellist
\pinlabel \small$\tilde x_0$ at 90 208
\pinlabel \small$\tilde x_1$ at 124 208
\pinlabel \small$\tilde x_2$ at 184 208
\pinlabel $\tilde x$ at 431 208
\pinlabel $h$ [l] at 421 307
\pinlabel $\sigma$ [b] at 247 410
\pinlabel $\sigma$ [t] at 241 90
\pinlabel {\small slip planes} [l] at 508 220
\endlabellist
\begin{center}
\includegraphics[width=3in]{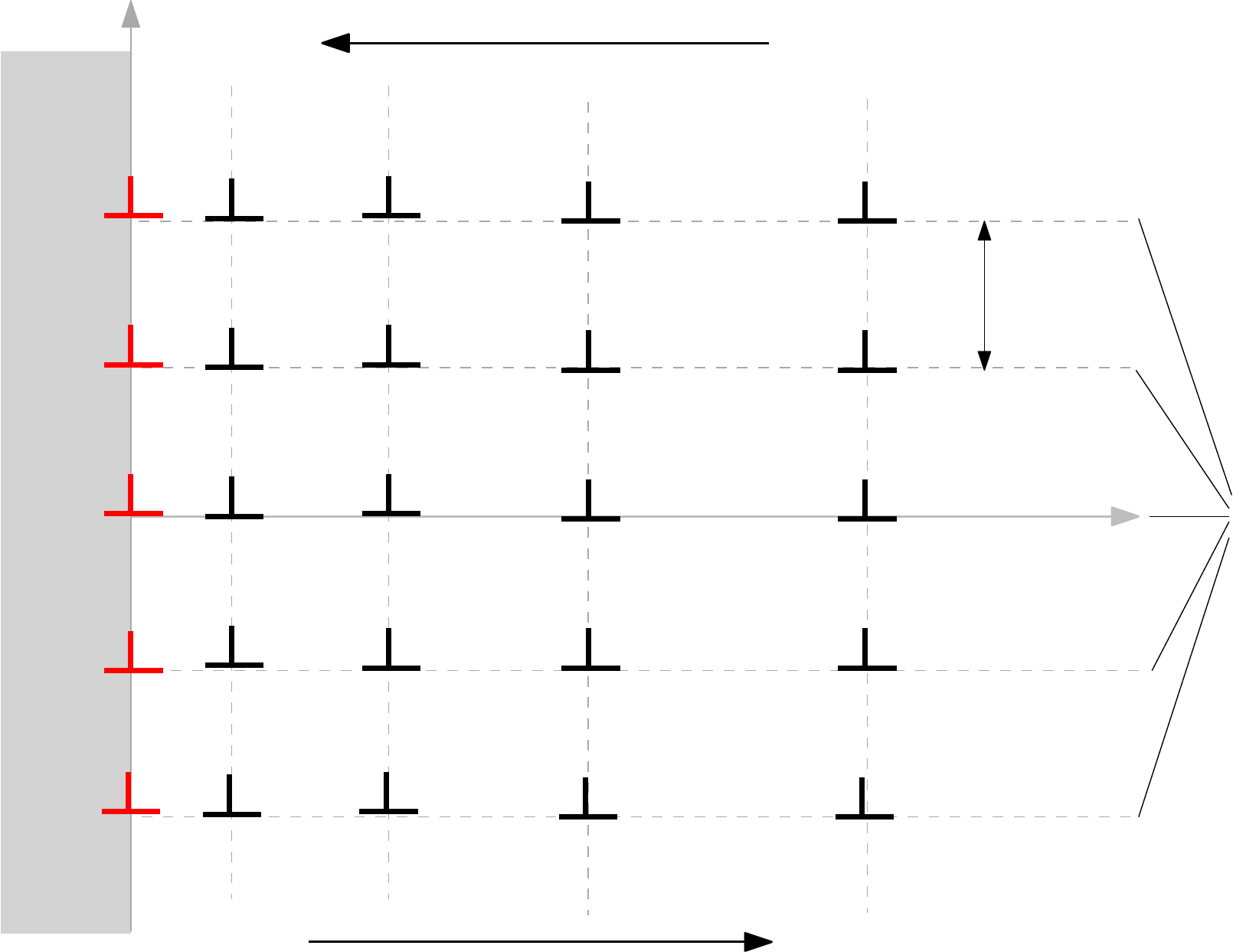}
\end{center}
\caption{The dislocation configuration considered in this paper. Infinite, vertical \emph{walls} of equispaced dislocations are free to move in the horizontal direction. A wall of fixed dislocations is pinned at $\tilde x_0=0$ and acts as a repellent.}
\label{Wall}
\end{figure}

We assume that the dislocations are spaced significantly farther apart than the atomic lattice spacing, which implies that the interactions between dislocation walls are well described by conventional formulae based on linear elasticity.

Given these assumptions, the system is driven by the discrete energy
\begin{equation}
\label{discreteenergy}
\E(\tilde{x}_1,\dots,\tilde{x}_n) = \frac K2\sum_{i=1}^n\sum_{\stackrel{j=0}{j\neq i}}^n V\left(\frac{\tilde x_i-\tilde x_j}{h}\right) +\sigma \sum_{i=1}^n \tilde x_i,
\end{equation}
where $ K := G b \pi/2(1-\nu)$, $G\; [\un{Pa}]$ is the shear modulus, $b \;[ \un m]$ the length of the Burgers vector, $\nu\;[1]$  the Poisson ratio of the material, and $\sigma\;[\un{Pa}]$ the imposed shear stress.
The (dimensionless) interaction energy  $V$ is
\begin{equation}
V(s):= \frac{1}{\pi}s\coth \pi s - \frac{1}{\pi^2}\log (2\sinh\pi s)\label{defV}
= \frac{2}{\pi}\frac{|s|}{(e^{2\pi |s|}-1)} - \frac{1}{\pi^2}\log(1-e^{-2\pi |s|}),
\end{equation}
and its derivative is the (dimensionless) force exerted by a wall on another wall at distance $s$,
\[
V'(s) = -\frac{s}{\sinh^2(\pi s)}.
\]

\begin{figure}[h]
\labellist
\pinlabel $V(s)$ [tr] at 170 100
\pinlabel $s$ at 345 -10
\pinlabel $\sim -\dfrac1{\pi^2}\log|s|$ [tl] at 180 220
\pinlabel $\sim\dfrac2\pi|s|e^{-2\pi|s|}$ [bl] at 240 20
\endlabellist
\begin{center}
\includegraphics[width=4in]{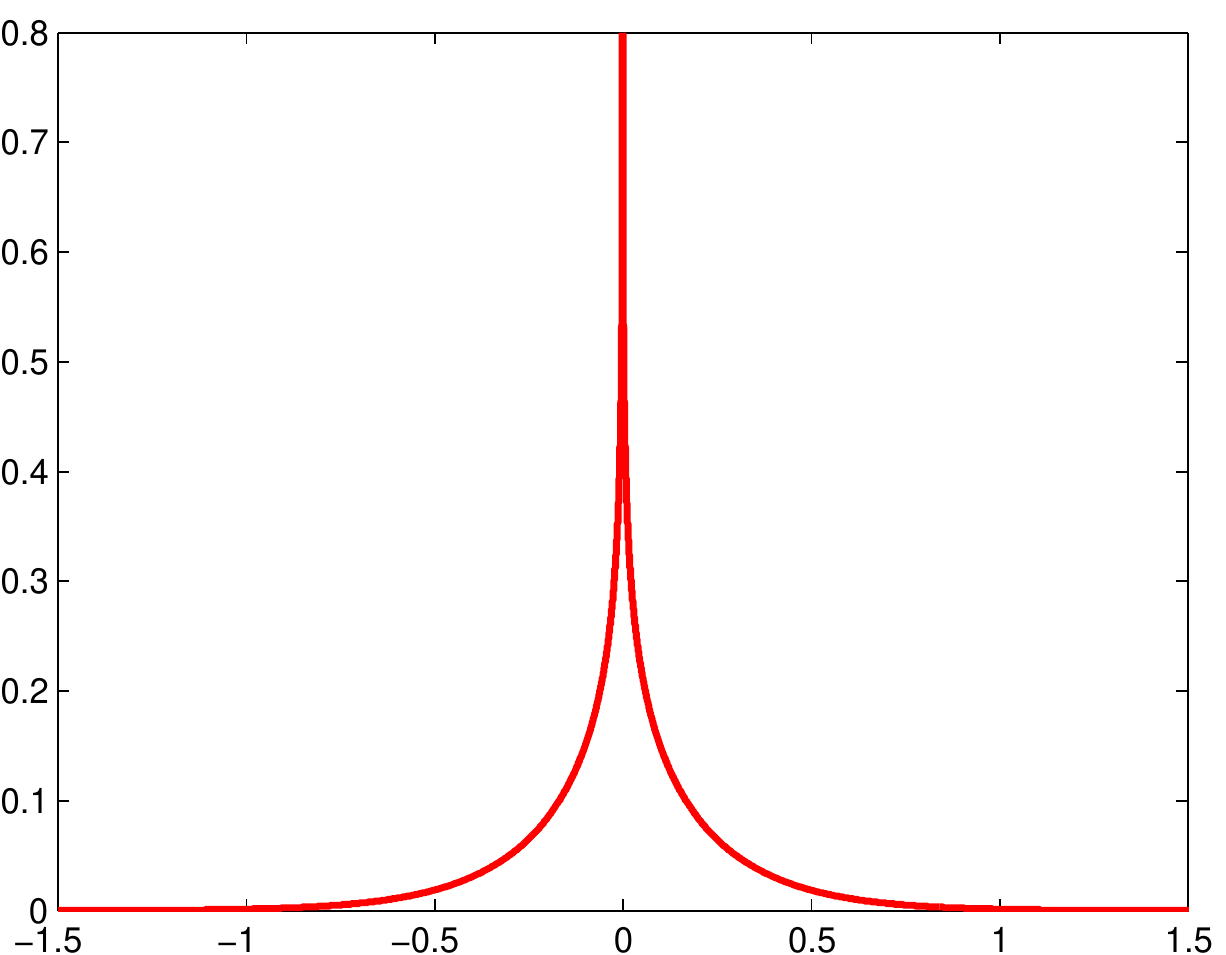}
\end{center}
\caption{The interaction energy  $V$.}
\label{PlotV}
\end{figure}

The first term in the discrete energy $\mathcal{E}$ in \eqref{discreteenergy} is fully repulsive: each pair of walls repels each other, with a potential that diverges logarithmically as the walls approach each other (see Figure~\ref{PlotV}). The second term of the energy, accounting for the global shear stress, drives the walls to the left. The repelling nature of the left barrier is implemented by pinning a wall at $\tilde x_0=0$.

Stationary points of this energy are equilibria of the mechanical system, and under the assumption of a linear drag relation (see e.g.~\cite[Sec.~3.5]{HullBacon01}) the evolution of the system is a gradient flow of this energy.

\medskip

Although the model is highly idealised, it has a number of properties that make it both interesting and not unrealistic. The fact that multiple dislocations move along exactly the same slip plane is natural, because of the way they are generated from \emph{Frank-Read sources} (e.g.~\cite[Sec.~8.6]{HullBacon01}). Moreover, although the assumption of an arrangement in equispaced vertical walls is clearly an idealisation, it is on the other hand not unrealistic since equispaced vertical walls are minimal-energy configurations. Walls of edge dislocations are locally stable, in the sense that if one of the dislocations deviates from its wall position, either horizontally or vertically, it experiences a restoring force from the other dislocations that pushes it back. Finally, the vertical organization in walls is also justified by correlation functions calculated from numerical simulations~(e.g.~\cite{ZaiserMiguelGroma01}).

Another interesting aspect of this model is that existing, phenomenological dislocation-density models can be applied to it to give predictions of the upscaled behaviour---which can then be tested against the rigorous results of this paper. In Section~\ref{sec:consequences} we discuss three of these, whose predictions for this system are summarized in Table \ref{Table}:

\begin{table}[H]\label{Table}
\renewcommand{\arraystretch}{1.2}
\begin{tabular}{lcc}
\toprule
Reference & Stationary state&$\sigma_{\mathrm{int}}$\\\midrule
Head \& Louat 1955~\cite{HeadLouat55} & $\rho(x) =\sqrt{\frac{C-x}{x}}$&\\
Groma, Csikor \& Zaiser 2003~\cite{GromaCsikorZaiser03}  & $\rho(x) = Ce^{-\widehat \sigma x}$ & $-\partial_x\rho/\rho$\\
Evers, Brekelmans and Geers 2004~\cite{EversBrekelmansGeers04}
   & $\rho(x) = C-\widehat\sigma x$& $-\partial_x\rho$\\
\bottomrule\\
\end{tabular}
\caption{Various predictions for the limiting behaviour of this system of dislocations. The system is characterized by the density $\rho$ of dislocations; $\sigma_{\mathrm{int}}$ is the prediction of the stress field generated by $\rho$. Parameters have been absorbed into $C$ and~$\widehat \sigma$ for simplicity.
We give a full discussion in Section~\ref{sec:consequences}.}
\end{table}
As we  show below, the results of this paper allow us to make sense of the three different predictions in this table.

\subsection{Main result}

The main mathematical result of this paper is the characterization of the limit behaviour of $\E$ as $n\to\infty$. Since this behaviour depends strongly on the assumptions on the behaviour of the whole set of other parameters  in this system, that is $h$, $K$ (or $G$, $b$, and $\nu$), and $\sigma$, we assume that {all parameters depend on $n$}. In fact the parameter space is only one-dimensional, since the problem can be rescaled to depend only on the single dimensionless parameter
\begin{equation}\label{betan}
\beta_n := \sqrt{\frac{K_n}{n\sigma_n h_n}} = \sqrt{\frac{\pi G_nb_n}{2n(1-\nu_n)\sigma_n h_n}}.
\end{equation}
In mechanical terms, $\beta_n$ measures the elastic properties of the medium (described by $K_n$) in comparison with the strength of the pile-up driving force $\sigma_n$. Large $\beta_n$, therefore, corresponds to weak forcing, and small $\beta_n$ to strong forcing

We characterize the limiting behaviour of the system by proving five $\Gamma$-convergence results, for five regimes of behaviour of $\beta_n$ as $n \to\infty$, after an appropriate rescaling of $\E$ and $(\tilde x_1,\dots,\tilde x_n)$ (rescalings that lead to the functionals $E^{(k)}_n(x_1,\dots,x_n)$, for $k\in\{1,2,3,4,5\}$, defined in Theorem \ref{th:main}).
A consequence of $\Gamma$-convergence is the convergence of minimizers.

Both the $\Gamma$-convergence of the energy and the convergence of minimizers depend on a concept of convergence for the set of wall positions $(\tilde x_i)_{i=1}^n$, or their rescaled versions $(x_i)_{i=1}^n$, as $n\to\infty$. A natural concept of convergence for such a system of wall positions  is weak convergence  of the corresponding empirical measures (which we prove being equivalent to the weak convergence of the linear interpolations of the wall positions in the space of functions with bounded variation, see Theorem \ref{compactness}).
For a vector $x\in \R^n$ define the \emph{empirical measure} as
\begin{equation}\label{measure}
\mu_n = \frac1n \sum_{i=1}^n \delta_{x_i}.
\end{equation}
The \emph{weak convergence} of $\mu_n$ to $\mu$, written as $\mu_n\longweakto \mu$, is defined by
\[
\int_{\Omega} \varphi(y)\, \mu_n(dy) \stackrel {n\to\infty} \longrightarrow
\int_{\Omega} \varphi(y)\, \mu(dy)\qquad\text{for all continuous and bounded }\varphi,
\]
where $\Omega:=[0,\infty)$. This is the concept of convergence that we use in this paper.

\begin{thm}[Asymptotic behaviour of $\E$]
\label{th:main}
In each of the cases below, boundedness of the functional $E_n^{(k)}$ implies that the empirical measures $\mu_n := \frac1n \sum_{i=1}^n{\delta_{ x_i}}$ are compact in the weak topology. In addition, the functional $E_n^{(k)}$ $\Gamma$-converges to a functional $E^{(k)}$ with respect to the same weak topology.

Case 1: If $\beta_n\ll 1/n$, then define the rescaled positions $x_1,\dots, x_n$ in terms of the physical positions $\tilde x_1,\dots,\tilde x_n$ by
\begin{equation}
\label{rescaling-x:case1}
 x_i = \tilde x_i \frac{ \sigma_n}{nK_n}
\end{equation}
and define
\[
E_n^{(1)}(x_1,\dots x_n) := \frac1{n^2K_n}\, {\E(\tilde x_1,\dots,\tilde x_n)} + \frac1{2\pi^2} (\log 2\pi n^2\beta_n^2 -1).
\]
Then $E_n^{(1)}$ $\Gamma$-converges to
\[
E^{(1)}(\mu):= -\frac{1}{2\pi^2}\iint_{\Omega^2} \log|x-y|\,\mu(dy)\mu(dx)  + \int_{\Omega} x \,\mu(dx).
\]

\bigskip

Cases $(2{-}4)$: If $\beta_n\sim 1/n$, $1/n\ll \beta_n \ll 1$, or $\beta_n\sim 1$, then define the rescaled positions $x_1,\dots, x_n$ as
\begin{equation}
\label{def:rescaling_case_2}
x_i = \tilde x_i \sqrt{\frac{\sigma_n}{n K_nh_n}},
\end{equation}
and define
\[
E_n^{(2-4)}(x_1,\dots,x_n) := \frac1{\sqrt{n^3K_nh_n\sigma_n}}\; \E(\tilde x_1,\dots,\tilde x_n).
\]
Then $E_n^{(2-4)}$ $\Gamma$-converges to
\begin{alignat}3
E^{(2)}(\mu) &:=\frac c 2\iint_{\Omega^2} V\bigl(c(x-y)\bigr) \,\mu(dx)\mu(dy) + \int_{\Omega} x\mu(dx)& \quad\text{if }{n\beta_n\to c},
  \label{def:limitenergyE2}\\
E^{(3)}(\mu) &:= \left\{
  \begin{aligned}
    &\frac12 \Bigl(\int_\R V\Bigr) \int_{\Omega} \rho(x)^2\, dx+ \int_{\Omega} x\rho(x)\, dx
      &\hspace{.25cm} & \text{if }\mu(dx) = \rho(x)\, dx\\
    &+\infty &&\text{otherwise}
  \end{aligned}
  \ \right\}
    &\quad\text{if }{\frac1n \ll \beta_n\ll 1},
    \label{def:limitenergyE3}\\
E^{(4)}(\mu)&:=\left\{
  \begin{aligned}
    &c\int_{\Omega} V_{\mathrm{eff}}\Bigl(\frac c{\rho(x)}\Bigr)\,\rho(x)dx
       +\int_{\Omega} x\rho(x)\, dx
          &\hspace{.1cm} & \text{if }\mu(dx) = \rho(x)\, dx\\
    &+\infty &&\text{otherwise}
  \end{aligned}
  \ \right\}
 &\quad\text{if }{\beta_n\to c},\label{def:limitenergyE4}
\end{alignat}
where the function $V_{\mathrm{eff}}$ in \eqref{def:limitenergyE4} is defined as
\[
V_{\mathrm{eff}}(s) := \sum_{k=1}^\infty V(ks).
\]

\bigskip

Case 5: If $\beta_n\gg1$, then define the rescaled positions $x_1,\dots, x_n$ as
\[
 x_i = \tilde x_i \left(\frac{1}{2\pi}\,nh_n\log\left(\frac{2}{\pi}\,\frac{K_n}{nh_n\sigma_n}\right)\right)^{-1}
\]
and define
\[
E_n^{(5)}(x_1,\dots x_n) := \left(\frac{1}{2\pi}n^2 h_n \sigma_n\log\left(\frac{2}{\pi}
\frac{K_n}{nh_n\sigma_n}\right)\right)^{-1}\mathcal{E}(\tilde{x}_1,\dots,\tilde{x}_n).
\]
Then $E_n^{(5)}$ $\Gamma$-converges to
\begin{align}
E^{(5)}(\mu) &:= \left\{
  \begin{aligned}
    &\int_{\Omega} x\rho(x)\,dx
      &\qquad & \text{if } \mu(dx) = \rho(x)\, dx \,\,\, \text{and } \, \rho\leq 1 \quad \mathcal{L} \, \text{-a.e.}\\
    &+\infty &&\text{otherwise}
  \end{aligned}
  \ \right\},
\end{align}
where $\Lebesgue$ is the Lebesgue measure.
\end{thm}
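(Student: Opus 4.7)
The proof splits into showing weak-compactness of $\mu_n$ and establishing matching $\Gamma$-liminf and $\Gamma$-limsup inequalities in each of the five regimes. A unifying first reformulation is to rewrite $E_n^{(k)}$ in terms of $\mu_n$. With $c_n := n\beta_n$, the stated rescalings yield for \emph{cases 2--5} (up to a vanishing diagonal correction)
\[
E_n^{(k)}(\mu_n) \;=\; \frac{c_n}{2}\iint_{\Omega^2} V\bigl(c_n(x-y)\bigr)\,\mu_n(dx)\mu_n(dy) \;+\; \int_\Omega x\,\mu_n(dx),
\]
whereas in \emph{case 1} the interaction argument scales as $c_n^2(x-y)$ and the expansion $V(s) = -\pi^{-2}\log|s| + W(s)$, with $W$ bounded and continuous on $\R$, lets the additive constant $\tfrac1{2\pi^2}(\log 2\pi n^2\beta_n^2 - 1)$ absorb the divergent log. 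Since $V\geq 0$, an energy bound gives uniform control of the first moment $\int x\,\mu_n(dx)$ in cases 2--5, which is the source of tightness on $\Omega=[0,\infty)$; for case 1 one additionally uses the elementary bound $-\iint\log|x-y|\,d(\mu\otimes\mu) \geq -2\int\log(1+x)\,\mu(dx) - \log 2$ together with $\log(1+x) \leq x$ to extract the moment bound from the renormalized energy.

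\textbf{$\Gamma$-liminf.} Fix $\mu_n \weakto \mu$; the linear term is trivially lower semicontinuous, so the analysis reduces to the interaction. In \emph{cases 1 and 2}, $c_n$ converges to a finite limit and the kernel is lower semicontinuous and bounded below; a standard diagonal-truncation argument combined with weak convergence of $\mu_n\otimes\mu_n$ yields the liminf, the case-1 renormalizing constant being precisely what is needed to cancel the divergence as the truncation shrinks. In \emph{case 3}, $c_n\to\infty$ and the family $c_n V(c_n\cdot)$ acts as an approximate identity of total mass $\int V$; a concentration-of-mass argument shows that any singular part of $\mu$ forces the interaction to $+\infty$, while on absolutely continuous limits a Young/Fatou-type inequality produces $\tfrac12(\int V)\int\rho^2$. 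Cases 4 and 5 require a genuinely discrete two-scale analysis: in \emph{case 4} one localizes on mesoscopic boxes where $\rho$ is nearly constant and compares the empirical arrangement to a uniform lattice of spacing $1/(n\rho)$, whose interaction-per-wall produces exactly $c\,V_{\mathrm{eff}}(c/\rho)$ by summing the series; in \emph{case 5}, a direct computation against a uniform reference shows that the nearest-neighbour interaction per wall scales as $\beta_n^{2(1-1/\rho)}$ up to multiplicative constants, which vanishes for $\rho<1$, remains $O(1)$ at $\rho=1$, and diverges for $\rho>1$, producing the hard constraint $\rho\leq 1$.

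\textbf{$\Gamma$-limsup.} For each admissible $\mu$ I would use the quantile recovery sequence $x_i^{(n)} := F_\mu^{-1}(i/n)$, where $F_\mu$ is the cumulative distribution function of $\mu$. Convergence of the energies follows by dominated convergence when $\mu$ has a smooth, compactly supported density, and the general admissible $\mu$ is obtained by a density/diagonal argument. In cases 4 and 5 one first restricts to smooth $\rho$ strictly below the admissibility threshold before passing to the limit, which ensures that the quantile configuration remains sub-critical at every finite $n$.

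\textbf{Main obstacle.} The genuine difficulty lies in \emph{case 4}, where the limit is neither purely local (as in case 3) nor purely nonlocal (as in cases 1--2): identifying $V_{\mathrm{eff}}$ as the effective per-wall interaction requires uniform control of the empirical arrangement against a locally periodic reference on mesoscales much larger than $1/n$ but much smaller than the macroscopic scale. Case 5 is a close second, since extracting the sharp threshold $\rho=1$ from an interaction that is both logarithmically singular at the origin and exponentially decaying at infinity demands a tight accounting of both the $V(s)\sim-\pi^{-2}\log|s|$ and the $V(s)\sim\tfrac{2}{\pi}|s|e^{-2\pi|s|}$ asymptotics against the precise scaling factor $\log(2K_n/\pi n h_n \sigma_n)$.
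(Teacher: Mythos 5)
Your overall architecture (compactness from the first moment, liminf by lower semicontinuity of the kernel against $\mu_n\otimes\mu_n$ in cases 1--2, quantile recovery sequences, reduction of case 5 to nearest neighbours with the exponent $\beta_n^{2(1-1/\rho)}$) matches the paper for cases 1, 2 and 5. But there is a genuine gap in the liminf of case 3. You claim that since $c_nV(c_n\cdot)$ is an approximate identity, ``a Young/Fatou-type inequality'' yields $\liminf_n \frac{c_n}2\iint V(c_n(x-y))\,\mu_n(dx)\mu_n(dy)\geq \frac12(\int V)\int\rho^2$. No such general inequality exists for empirical measures against a nonnegative concentrating kernel: for instance, with a kernel vanishing near the origin and supported on $[1,2]\cup[-2,-1]$, equispaced points with gap $3/c_n$ give \emph{zero} interaction while $(\int V)\int\rho^2>0$. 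The inequality is rescued here only because $V$ is positive definite: the paper proves $\widehat V\geq 0$ (via an explicit Fourier computation of $\coth$ in the Appendix), writes $V=W\ast W$ with $W=(\widehat V)^{1/2\vee}$, turns the interaction into $\|W_n\ast\mu_n\|_{L^2}^2$ up to a diagonal correction of order $\beta_n\|W^m\|_2^2\to 0$, and then uses weak $L^2$ compactness of $W^m_n\ast\mu_n$ both to get the lower bound and to prove that $\mu\ll\Lebesgue$ with $L^2$ density. This structural input is the heart of case 3 and is absent from your plan; convexity of $V$ (which you use elsewhere) also does not substitute for it in the liminf direction, since Jensen points the wrong way there.

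Two further points. First, in case 4 your mesoscopic-box comparison with a local uniform lattice is a genuinely different route from the paper, which instead rewrites the $k$-th neighbour term as $\int V^k\bigl((\x^n_{k,j})'\bigr)$ for averaged affine interpolants and invokes lower semicontinuity of convex integral functionals under weak-$BV$ convergence, truncating $V_{\mathrm{eff}}$ at $V^N_{\mathrm{eff}}=\sum_{k\le N}V(k\cdot)$; your route must additionally control box-boundary interactions and non-constant or degenerate local densities, which is exactly the difficulty you flag but do not resolve, whereas the paper's argument avoids any locality assumption on $\mu_n$. (Relatedly, your limsup ``density/diagonal argument'' silently needs the relaxation result of the paper's Theorem~\ref{relax} to reduce general increasing $\x\in BV_{\mathrm{loc}}$ to $W^{1,1}$ profiles.) Second, your decomposition $V(s)=-\pi^{-2}\log|s|+W(s)$ with ``$W$ bounded and continuous on $\R$'' is false as stated: since $V$ decays exponentially, $W(s)\sim\pi^{-2}\log|s|$ as $|s|\to\infty$. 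What is true, and what the paper proves as the two-sided bounds $V(t)\geq\pi^{-2}(1-\log 2\pi|t|)$ and $V(t)\leq\pi^{-2}(1+2\pi t-\log 2\pi t)$, is that $W$ is bounded below on $\R$ and bounded above near the origin; this suffices for your case-1 argument, but the claim needs correcting.
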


The limiting energies have the nice property of strict convexity, either with respect to the linear structure in the space of measures, or in the sense of displacement convexity~\cite{McCann97}. This gives uniqueness of minimizers:

\begin{thm}[Existence and uniqueness of limiting minimizers]
\label{lem:uniqueness}
For each $k\in\{1,2,3,4,5\}$, $E^{(k)}$ has a unique minimizer in the set $\M(\Omega)$ of non-negative, unit-mass Borel measures on $\Omega$.
\end{thm}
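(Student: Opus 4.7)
The plan is to treat all five cases by combining existence via the direct method with uniqueness via a strict-convexity argument tailored to each limit functional. In every case the coercive term $\int x\,\mu(dx)$, together with the prescribed unit mass of $\mu$, guarantees that sublevel sets of $E^{(k)}$ are tight; combined with the weak lower semicontinuity of $E^{(k)}$ (which already enters the $\Gamma$-liminf part of Theorem~\ref{th:main}), the direct method yields a minimizer. It therefore remains to show uniqueness in each regime.

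In Cases~1 and~2 the energy has the form $\frac12\iint U(x-y)\,\mu(dx)\mu(dy)+\int x\,\mu(dx)$, a quadratic interaction plus a linear perturbation. Strict convexity in the usual linear structure on $\M(\Omega)$ reduces to showing that $\iint U(x-y)\,\nu(dx)\nu(dy)>0$ for every nonzero signed measure $\nu$ with $\nu(\Omega)=0$. In Case~1 this is the classical conditional positive-definiteness of the logarithmic kernel on $\R$ (see, e.g., the potential-theoretic theory underlying weighted logarithmic equilibria). In Case~2, using the series representation
\[
V(s)=\sum_{k=1}^\infty\Bigl(\frac{2|s|}{\pi}+\frac{1}{\pi^2k}\Bigr)e^{-2\pi k|s|}
\]
obtained by expanding the two terms in~\eqref{defV} as geometric / logarithmic series, one verifies that $\hat V\ge 0$ (and is strictly positive on a set of full measure), which gives the required strict conditional positive-definiteness of the kernel $s\mapsto V(cs)$ and hence strict convexity of $E^{(2)}$.

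Case~3 is the easiest: $E^{(3)}$ is the sum of a strictly convex local functional $\tfrac12(\int V)\int\rho^2\,dx$ (note $V>0$ off the origin, so $\int V>0$) and a linear functional, hence strictly convex in $\rho$. For Case~4 we invoke McCann's displacement convexity~\cite{McCann97}: in one dimension the internal energy $\int f(\rho)\,dx$ is (strictly) displacement convex whenever $s\mapsto sf(1/s)$ is (strictly) convex and non-increasing on $(0,\infty)$. Here $f(\rho)=cV_{\mathrm{eff}}(c/\rho)\rho$, so $sf(1/s)=cV_{\mathrm{eff}}(cs)$; a direct computation from the formula for $V'$ shows $V''(s)>0$ on $(0,\infty)$, whence $V_{\mathrm{eff}}=\sum_k V(k\cdot)$ is strictly convex and strictly decreasing on $(0,\infty)$. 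Since the external term $\int x\rho\,dx$ is displacement affine, $E^{(4)}$ is strictly displacement convex and hence has at most one minimizer among absolutely continuous measures (to which the finite-energy set is already restricted). Case~5 reduces to the classical bathtub principle: among densities with $0\le\rho\le1$ and $\int\rho=1$, the unique minimizer of $\int x\rho\,dx$ is $\mathbf 1_{[0,1]}$. The main technical obstacle is the verification of $\hat V\ge 0$ in Case~2, since the series above does not display positivity of the transform directly and one must either compute the Fourier transforms of the individual summands or argue by comparison with the log-plus-smooth decomposition; once this is in hand, the other cases follow from standard potential-theoretic or optimal-transport arguments together with the elementary bathtub principle.
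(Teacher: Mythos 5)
Your proposal is correct and shares the paper's overall skeleton (direct method for existence, case-by-case strict convexity for uniqueness, bathtub principle for $k=5$, and the same displacement-convexity computation for $k=4$ and the same $\rho\mapsto\rho^2$ argument for $k=3$), but it diverges from the paper on Cases~1 and~2. There you argue strict convexity in the \emph{linear} structure of $\M(\Omega)$ via (conditional) positive-definiteness of the kernels $-\log|\cdot|$ and $V$, i.e.\ via $\iint U(x-y)\,\nu(dx)\nu(dy)>0$ for nonzero $\nu$ with $\nu(\Omega)=0$. The paper instead rewrites $E^{(1)}$ and $E^{(2)}$ in terms of the increasing function $\x$ (the generalized inverse of the distribution function), e.g.
\[
E^{(2)}(\x)=c\int_0^1\!\!\int_0^t V\bigl(c(\x(t)-\x(s))\bigr)\,ds\,dt+\int_0^1\x(s)\,ds,
\]
and deduces strict convexity in $\x$ directly from the strict convexity of $V$ (resp.\ $-\log$) on $(0,\infty)$ --- that is, it uses displacement convexity for \emph{all} of Cases~1, 2 and~4, which is softer and avoids Fourier analysis entirely. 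Your route is also viable, and the ``main technical obstacle'' you flag --- positivity of $\widehat V$ --- is in fact already established in the paper's Appendix (Subsection~\ref{convolution}), where $\widehat V(\xi)=\frac{1}{2\pi^2\xi}\frac{d}{d\xi}\bigl(\xi\coth(\pi\xi)\bigr)$ is computed and shown to be strictly positive for $\xi\neq0$; so that ingredient comes for free, though you would still need the standard Plancherel/energy identity to convert $\widehat V>0$ a.e.\ into strict conditional positive-definiteness for the class of finite-energy signed measures involved. One small gap on the existence side: for $E^{(1)}$ the interaction term is unbounded below (since $-\log|x-y|\to-\infty$), so the coercivity of $\int x\,\mu(dx)$ alone does not control sublevel sets; the paper handles this by the pointwise bound $-\frac1{\pi^2}\log|t|+\frac12|t|\geq\mathrm{const}$ from the proof of Theorem~\ref{th:subcritical}, and your write-up should invoke the same (or an equivalent) estimate rather than asserting tightness uniformly across all five cases.
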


As a consequence we can characterize the behaviour of sequences of minimizers:

\begin{cor}[Convergence of discrete minimizers]
\label{cor:minimizers-converge}
Let the asymptotic behaviour of $\beta_n$ be as in case $k\in\{1,2,3,4,5\}$ of Theorem~\ref{th:main}.
Let $(\tilde x^n_1,\dots,\tilde x^n_n)$ be a sequence of $n$-vectors such that
for each~$n$, $(\tilde x^n_1,\dots,\tilde x^n_n)$ is minimal for $\E$. Then, rescaling $(\tilde x^n_1,\dots,\tilde x^n_n)$ to $( x^n_1,\dots, x^n_n)$ as in Theorem~\ref{th:main}, the corresponding empirical measure $\mu_n = \frac1n \sum_{i=1}^n \delta_{x_i^n}$ converges weakly to the global minimizer of~$E^{(k)}$.
\end{cor}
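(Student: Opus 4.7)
The plan is to use the classical machinery that $\Gamma$-convergence together with equicoercivity yields convergence of minimizers, and then to invoke uniqueness (Theorem~\ref{lem:uniqueness}) to upgrade convergence along subsequences to convergence of the full sequence.

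First I would verify that the sequence $(\mu_n)$ has uniformly bounded energy $E_n^{(k)}$. Since $E^{(k)}$ has a minimizer $\mu^*$ in $\M(\Omega)$ with $E^{(k)}(\mu^*)<\infty$ (inspection of each of the five limit functionals shows that a suitable compactly supported density gives finite energy), the $\limsup$ inequality from the $\Gamma$-convergence in Theorem~\ref{th:main} provides a recovery sequence of empirical measures $\nu_n = \frac1n\sum_i \delta_{y_i^n}$ with $\nu_n \longweakto \mu^*$ and
\[
\limsup_{n\to\infty} E_n^{(k)}(\nu_n) \leq E^{(k)}(\mu^*) < \infty.
\]
Because $(x_1^n,\dots,x_n^n)$ is the rescaled image of a minimizer of $\mathcal{E}$, we have $E_n^{(k)}(\mu_n) \leq E_n^{(k)}(\nu_n)$ and hence $\limsup_n E_n^{(k)}(\mu_n) \leq E^{(k)}(\mu^*)$, so the sequence $(E_n^{(k)}(\mu_n))$ is bounded from above.

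Next I would use the compactness half of Theorem~\ref{th:main}: boundedness of $E_n^{(k)}$ along the sequence $\mu_n$ implies that $(\mu_n)$ is relatively compact in the weak topology. Fix any subsequence $(\mu_{n_j})$; pass to a further subsequence, not relabelled, along which $\mu_{n_j} \longweakto \mu^{**}$ for some $\mu^{**}\in \M(\Omega)$. The $\liminf$ inequality from $\Gamma$-convergence gives
\[
E^{(k)}(\mu^{**}) \leq \liminf_{j\to\infty} E_{n_j}^{(k)}(\mu_{n_j}) \leq \limsup_{n\to\infty} E_n^{(k)}(\nu_n) \leq E^{(k)}(\mu^*),
\]
so $\mu^{**}$ is a minimizer of $E^{(k)}$. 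By Theorem~\ref{lem:uniqueness}, $\mu^{**} = \mu^*$.

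Finally, since every subsequence of $(\mu_n)$ admits a further subsequence converging weakly to the same limit $\mu^*$, the whole sequence $\mu_n$ converges weakly to $\mu^*$. I do not expect any genuine obstacle here: everything follows from the $\Gamma$-convergence and compactness packaged in Theorem~\ref{th:main} and from the uniqueness statement of Theorem~\ref{lem:uniqueness}. The only mildly delicate point is making sure that $E^{(k)}(\mu^*)$ is finite so that the recovery sequence argument produces a meaningful upper bound, which is routine given the explicit formulas \eqref{def:limitenergyE2}--\eqref{def:limitenergyE4} and the Case 1 and Case 5 energies (one may test with, e.g., a uniform density on a bounded interval, truncated or smoothed as needed to satisfy the constraint $\rho\le 1$ in Case 5).
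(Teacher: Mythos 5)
Your proposal is correct and follows essentially the same route as the paper's proof: bound the energies of the minimizing sequence via a recovery sequence for the limit minimizer, invoke the compactness statement of Theorem~\ref{th:main}, identify every subsequential limit as a minimizer of $E^{(k)}$ via the $\liminf$ inequality, and conclude full convergence from the uniqueness in Theorem~\ref{lem:uniqueness}. The only cosmetic difference is that you write out the $\liminf$/$\limsup$ chain explicitly where the paper cites a standard result from Dal Maso's book.
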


The proof of Theorem \ref{th:main} is the subject of Section \ref{sec:proof}; Theorem~\ref{lem:uniqueness} and Corollary \ref{cor:minimizers-converge} are proved in Section \ref{sec:uniqueness}.

\medskip

Figures~\ref{DD1}--\ref{DC5} show some numerical examples of the matching between discrete and continuous energies. Note that the optimal discrete density $\rho_n$ plotted in the Figures below is defined for every $i=2,\dots,n-1$ as
$$
\rho_n(x_i):= \frac{2A_n}{x_{i+1}-x_{i-1}},
$$
where $(x_1,\dots,x_n)$ is the minimiser of the discrete energy $E_n^{(k)}$, for $k=1,\dots,5$ and $A_n$ is a normalization factor ensuring that the area below the linear interpolant of $\rho_n$ is one.

%
\begin{figure}[htbp]
\labellist
\pinlabel {\small Dislocation positions} [b] at 180 -18
\pinlabel \scriptsize $\rho_n$ at 328 242.5
\pinlabel \scriptsize $\mu$ at 326 256
\endlabellist
\begin{center}
\includegraphics[width=3.6in]{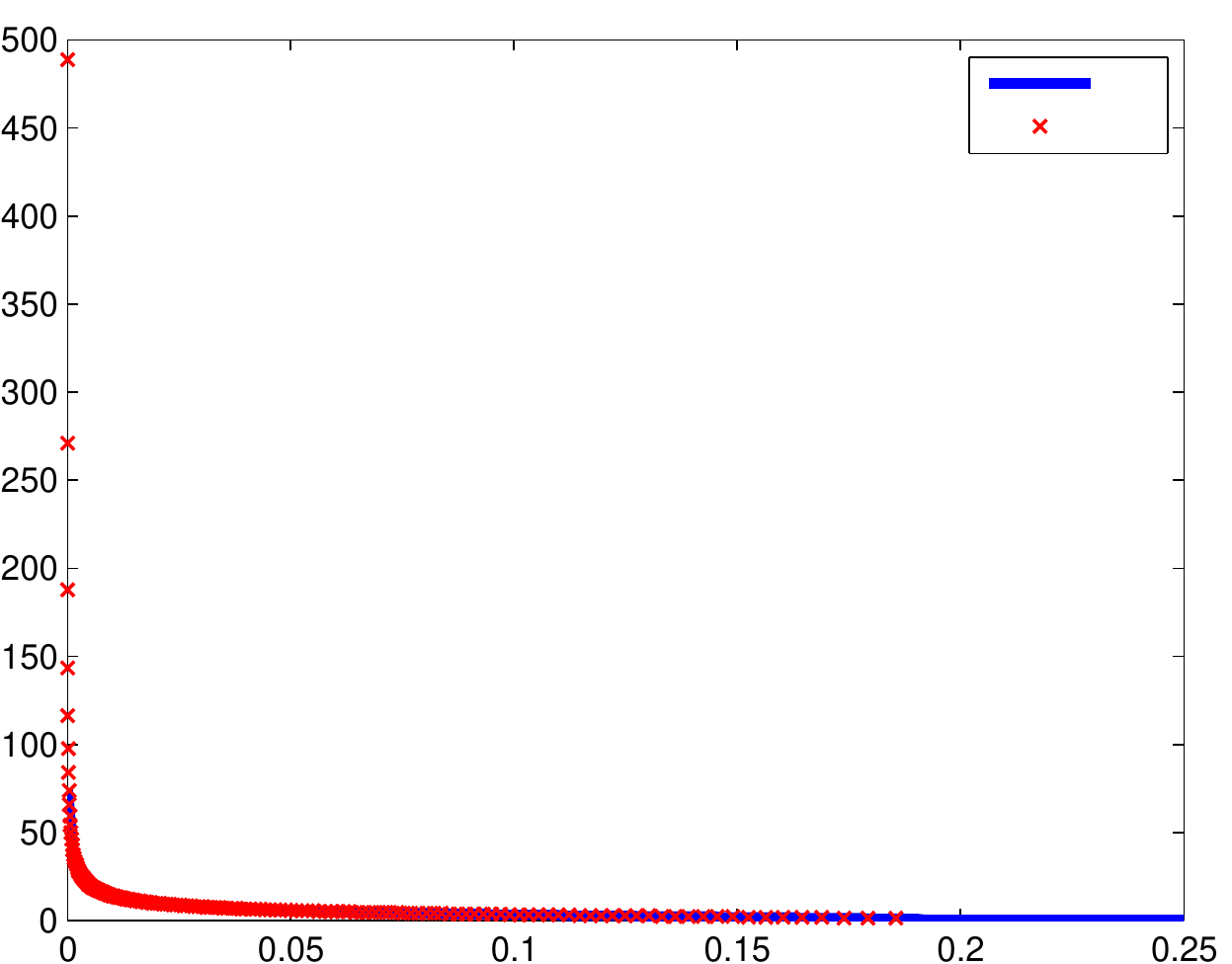}
\end{center}
\caption{Optimal densities relative to $E_n^{(1)}$ and $E^{(1)}$, for $n=150$ and $\beta_n = 6/(n\sqrt n)$.}
\label{DD1}
\end{figure}

\begin{figure}[htbp]
\labellist
\pinlabel {\small Dislocation positions} [b] at 180 -18
\pinlabel \scriptsize $\rho_n$ at 328 242.5
\pinlabel \scriptsize $\mu$ at 326 256
\endlabellist
\begin{center}
\includegraphics[width=3.6in]{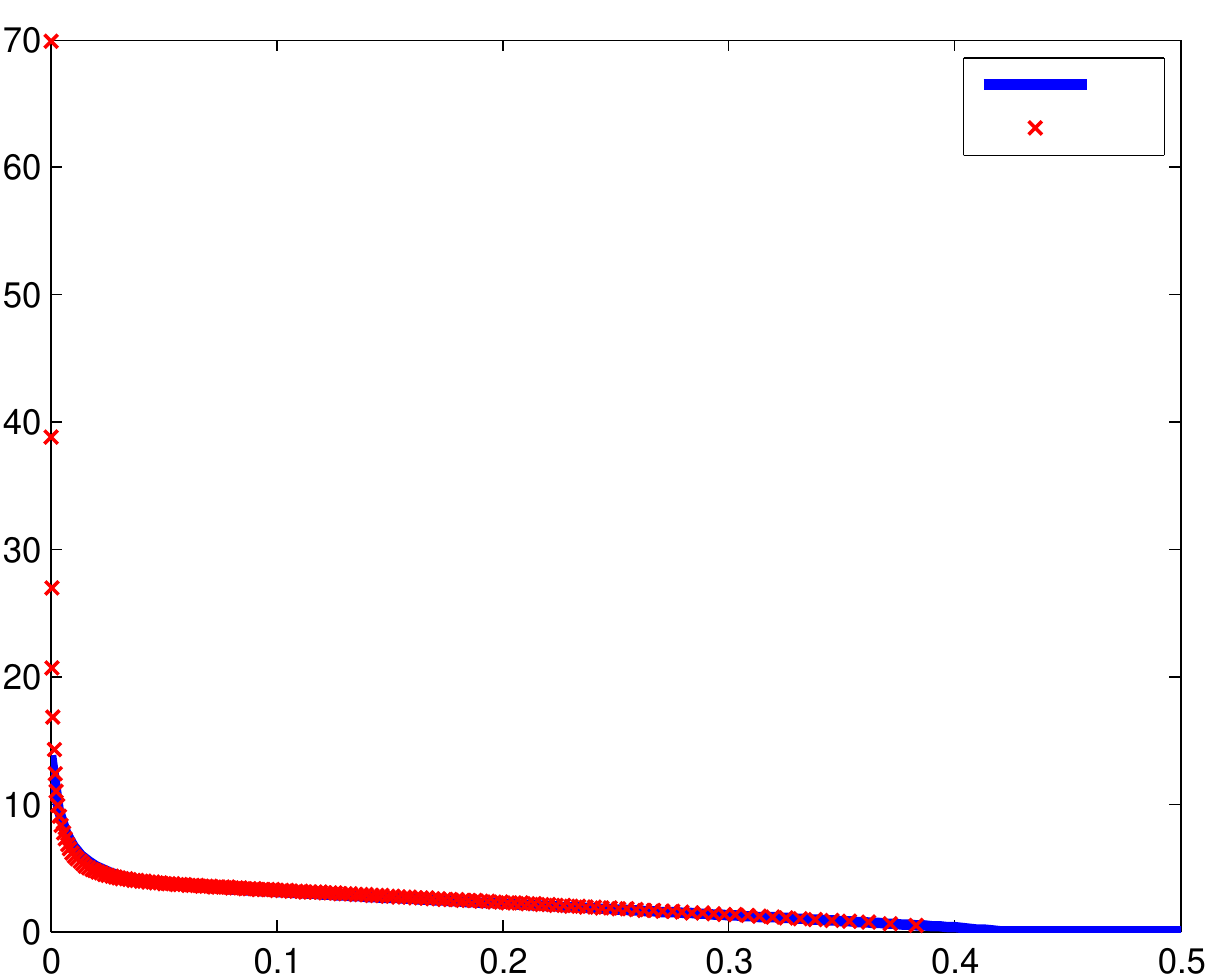}
\end{center}
\caption{Optimal densities relative to $E_n^{(2)}$ and $E^{(2)}$, for $n=150$ $\beta_n = 5/n$.}
\label{DD2}
\end{figure}

\begin{figure}[htbp]
\labellist
\pinlabel {\small Dislocation positions} [b] at 180 -18
\pinlabel \scriptsize $\rho_n$ at 328 242.5
\pinlabel \scriptsize $\mu$ at 326 256
\endlabellist
\begin{center}
\includegraphics[width=3.6in]{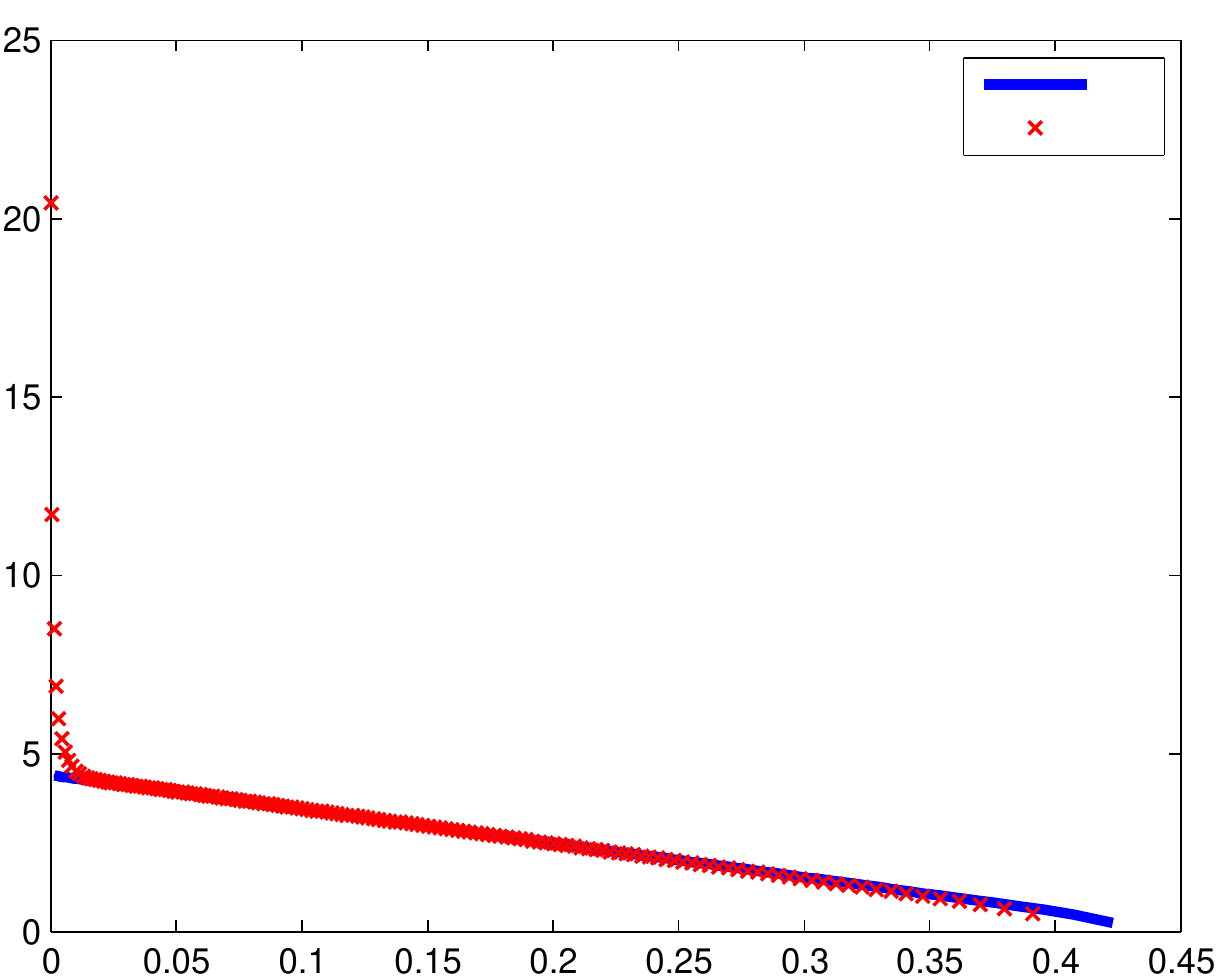}
\end{center}
\caption{Optimal densities relative to $E_n^{(3)}$ and $E^{(3)}$, for $n=150$ and $\beta_n = 1/\sqrt n=1/\sqrt{150}$.}
\label{DC3}
\end{figure}

\begin{figure}[htbp]
\labellist
\pinlabel {\small Dislocation positions} [b] at 180 -17
\pinlabel \scriptsize $\rho_n$ at 322 242.5
\pinlabel \scriptsize $\mu$ at 318 255
\endlabellist
\begin{center}
\includegraphics[width=3.6in]{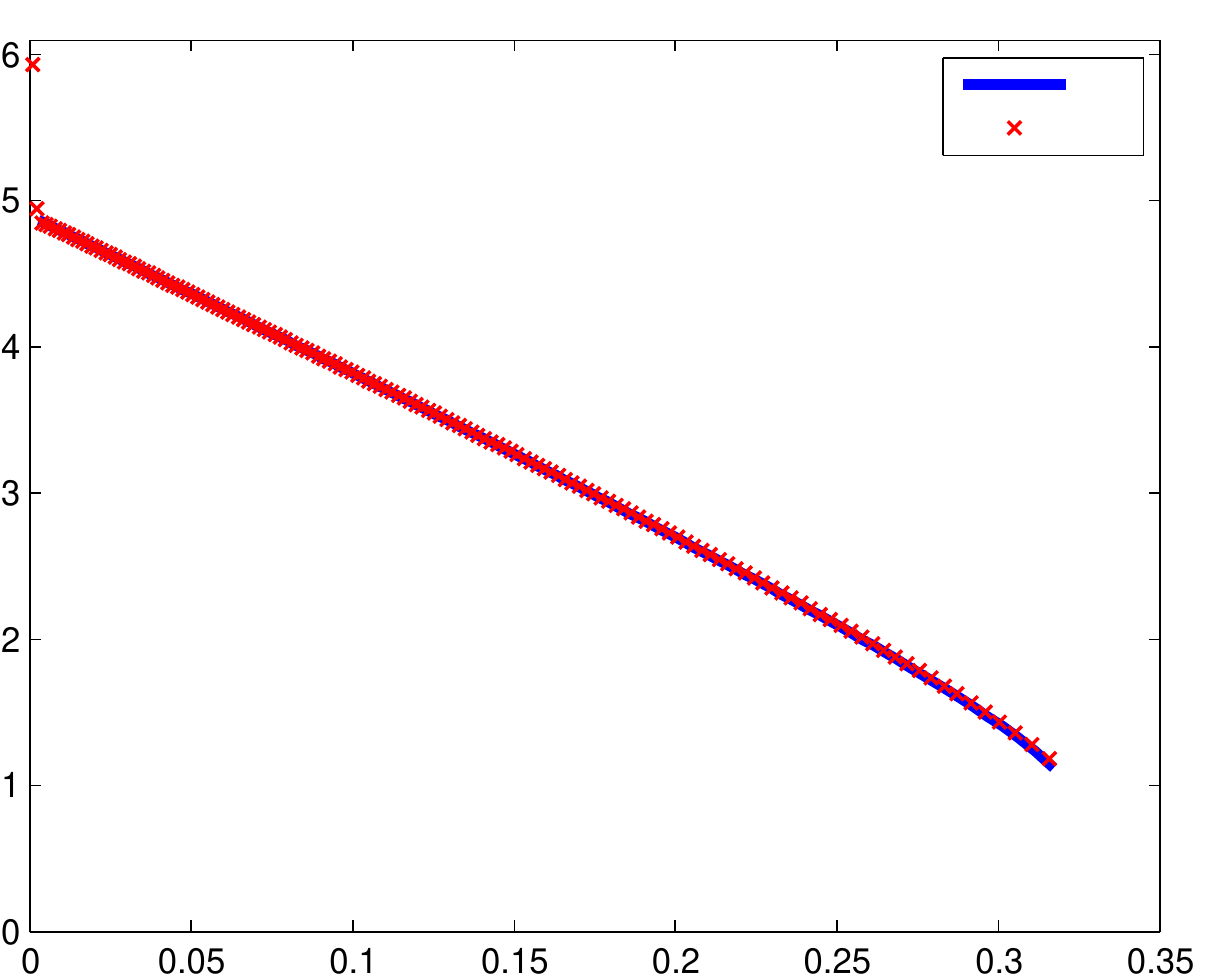}
\end{center}
\caption{Optimal densities relative to $E_n^{(4)}$ and $E^{(4)}$, with $n=150$ and $\beta_n =1$.}
\label{DC4}
\end{figure}

\begin{figure}[htbp]
\labellist
\pinlabel {\small Dislocation positions} [b] at 180 -17
\pinlabel \scriptsize $\rho_n$ at 328 242.5
\pinlabel \scriptsize $\mu$ at 326 255
\endlabellist
\begin{center}
\includegraphics[width=3.6in]{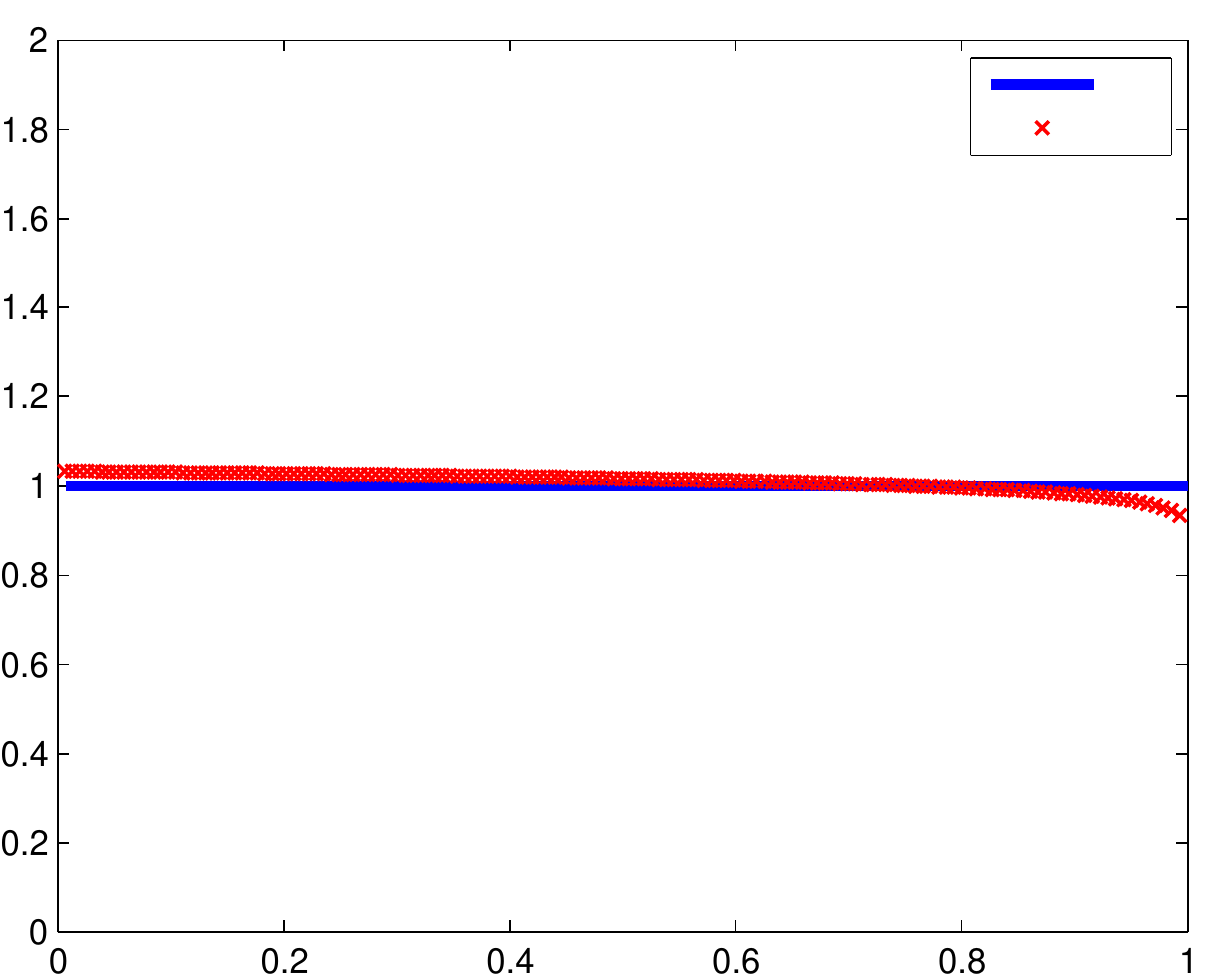}
\end{center}
\caption{Optimal densities relative to $E_n^{(5)}$ and $E^{(5)}$, with $n=200$ and $\beta_n =10^5$.}
\label{DC5}
\end{figure}

\subsection{Five regimes} The role of  $\beta_n$ and of the different asymptotic regimes can be understood as follows. Define the average dimensional distance between two walls (assuming $n$ even) as
\[
\Delta \tilde x := \frac{\tilde x_{n/2}}{n/2}.
\]
Note that $\tilde x_{n/2}$ is a `middle' wall, and therefore a reasonable indication of the size of the pileup. Assuming cases 2--4, we can then rewrite~\eqref{def:rescaling_case_2} as
\[
\frac{\Delta\tilde x}{h_n}   = 2x_{n/2} \beta_n.
\]
If the empirical measures $\mu_n$ in \eqref{measure} converge, then $x_{n/2}  = O(1)$;  this equality therefore indicates that $\beta_n$ is a measure of the aspect ratio $\Delta \tilde x/h_n$, or, put differently, $n\beta_n$ is a measure of the total length of the pileup, relative to $h_n$.

Cases 2--4, therefore, can be understood heuristically as follows:
\begin{itemize}
\item If $\beta_n\to 0$ and $n\beta_n\to\infty$ (case $3$, and Figure~\ref{subfig:case3} below), then the range of the ratio $|\tilde x_i-\tilde x_j|/h_n$, which appears as an argument of $V$ in~\eqref{discreteenergy}, asymptotically covers the whole range from $0$ to $\infty$. In this case the discrete system effectively samples the integral $\int V$, and this integral therefore appears in the limit energy~\eqref{def:limitenergyE3}.
\item If $\beta_n\to c>0$ (case 4 and Figure~\ref{subfig:case4}), then the sampling of $V$ does not refine, but remains discrete, and instead of the integral $\int V$ we find the discrete sampling $V_{\mathrm {eff}}$~\eqref{def:limitenergyE4}.
\item If $n\beta_n\to c>0$ (case 2 and Figure~\ref{subfig:case2}), then the pile-up is not long enough to cover the whole of the integral of $V$. In addition, in this case the length scales of $\mu_n$ and of $V$ are exactly the same, and a convolution integral results.
\end{itemize}

\subfigcapskip23pt
\subcapcentertrue
\begin{figure}[ht]
\centering
\small
\subfigure[$n\beta_n\to c$: total length of the pile-up $n\Delta \tilde x$ remains $O(h_n)$\label{subfig:case2}]{%
\labellist
\pinlabel $\dfrac{n\Delta \tilde x}{h_n}$ [t] at 39 1
\pinlabel $V$ [bl] at 73 27
\endlabellist
\includegraphics[height=3cm]{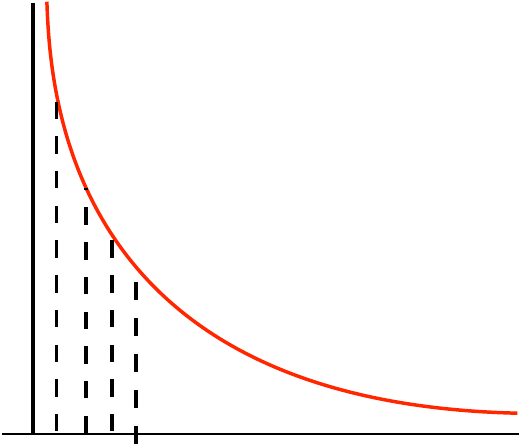}}
\qquad
\subfigure[$1/n\ll\beta_n\ll 1$: the full range $[0,\infty)$ is sampled\label{subfig:case3}]{%
\labellist
\pinlabel $\dfrac{n\Delta \tilde x}{h_n}$ [t] at 121 1
\pinlabel $\dfrac{\Delta \tilde x}{h_n}$ [t] at 21 1
\pinlabel $V$ [bl] at 73 27
\endlabellist
\includegraphics[height=3cm]{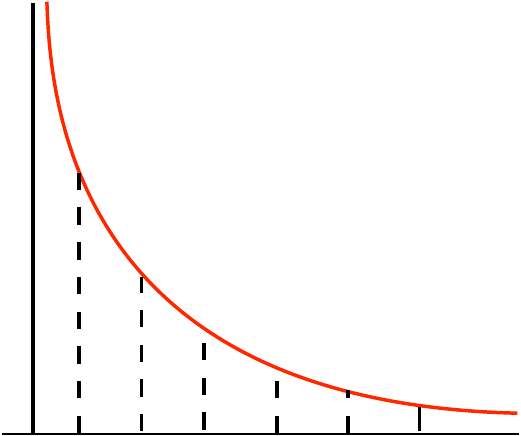}}
\qquad
\subfigure[$\beta_n\to c$: the first dislocation $\tilde x_1\approx \Delta \tilde x$ is of the same order as $h_n$\label{subfig:case4}]{
\labellist
\pinlabel $\dfrac{n\Delta \tilde x}{h_n}$ [t] at 121 1
\pinlabel $\dfrac{\Delta \tilde x}{h_n}$ [t] at 37 1
\pinlabel $V$ [bl] at 73 27
\endlabellist
\includegraphics[height=3cm]{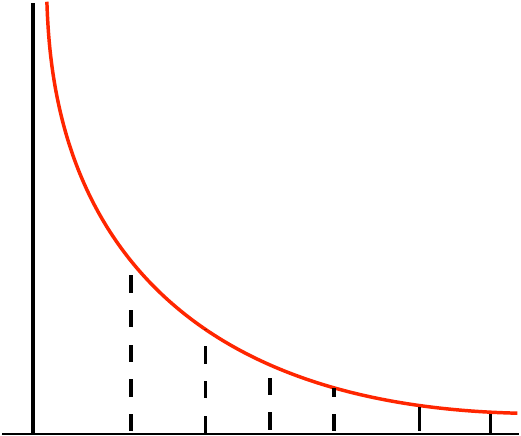}}
\caption{Cases 2--4 of Theorem~\ref{th:main}. Since $\beta_n$ is a measure of the aspect ratio $\Delta\tilde x/h_n$, the scaling of $\beta_n$ determines which values the ratio $(\tilde x_i-\tilde x_j)/h_n$ takes in the argument of $V$ in~\eqref{discreteenergy}.}
\end{figure}

Case 1, where $\beta_n$ is so small that $n\beta_n\to 0$, is a variant of case 2, but now the dislocation walls are pushed completely into the logarithmic singularity of $V$ at the origin (since by~\eqref{rescaling-x:case1} the typical total length of the pile-up is $n K_n/\sigma_n =h_nn^2\beta_n^2$, and this is small with respect to $h_n$). We observe that by the definition of $\beta_n$ \eqref{betan} this situation corresponds to strong forcing, which pushes the dislocation walls closer to each other.
Because of the scaling dependence of the logarithm, a multiplicative rescaling in space (in order to make the sequence $\mu_n$ converge to a non-trivial limit) results in an additive rescaling of $\E$. The corresponding picture is similar to Figure~\ref{subfig:case2}.

In case 5, where $\beta_n$ is large, the value of $(\tilde x_i-\tilde x_j)/h_n$ also becomes large; even the two closest dislocation walls have distance asymptotically larger than $h_n$. Then the dislocations only sample the exponential tail of $V$. By the definition of $\beta_n$ \eqref{betan} this case corresponds to very weak forcing. The degenerate nature of the limit of the interaction energy, which is zero if $\mu=\rho\,dx$ with $\rho\leq 1$, and $+\infty$ otherwise, arises from the `winner takes all' behaviour of the exponential function.

\bigskip

\emph{Other possibilities for $\beta_n$.} One might wonder whether other scaling behaviour of $\beta_n$ could give different results. Although it is certainly possible to construct sequences $\beta_n$ that do not fit into the five classes above, by taking subsequences one can reduce the behaviour to one of these five possibilities. Of course, if different subsequences have different asymptotic behaviour, then one does not expect the functionals to converge; this non-convergence is a further indication that one should separate the cases by dividing into subsequences.

\subsection{Comparison with mesoscopic models in the engineering literature}\label{sec:consequences}

As mentioned above, one motivation for this research is the derivation of a model describing the behaviour of \emph{densities} of dislocations from a more fundamental microscopic model described by the discrete energy~\eqref{defV}. The need for a rigorous derivation of such a dislocation-density model is underlined by the fact that multiple models exist in the literature (see Table~\ref{Table}) that are inconsistent with each other and whose range of validity is not clear.

In the case of this paper, straight parallel edge dislocations in a single slip system, the upscaled evolution equation for the dislocation density (or measure) $\mu$ is expected to be of the form
\begin{equation}\label{mech:1}
\partial_t \mu + \partial_x (v\mu) = 0.
\end{equation}
Here $v(x,t)$ is the velocity of dislocations at $(x,t)$, and is usually taken to be
\begin{equation}\label{mech:2}
v = \frac1B (\sigma_{\mathrm{int}}-\sigma).
\end{equation}
Here $B$ is a mobility coefficient, $\sigma$ is the externally imposed shear stress (as above) and $\sigma_{\mathrm{int}}$ is the shear stress field that the dislocations themselves generate, which is assumed to depend on the dislocation density and on the gradient of the density, i.e., $\sigma_{\textrm{int}} = \sigma_{\textrm{int}}(x,\mu,\partial_x\mu)$. The structure \eqref{mech:1}-\eqref{mech:2} arises naturally from the evolution equations for the discrete system,
\begin{equation}
\label{eq:GFn}
\frac{d}{dt} \tilde x_i = - \frac1B \partial_{\tilde x_i} \E(\tilde x_1,\dots,\tilde x_n)
= -\frac K{Bh}\sum_{\substack{j=0\\j\not=i}}^n V'\Bigl(\frac{\tilde x_i-\tilde x_j}h\Bigr)
 - \frac\sigma B,
\end{equation}
which suggests that $\sigma_{\mathrm{int}}$ should be the upscaled limit of the interaction forces, represented by the sum in~\eqref{eq:GFn}.

The different models proposed in the engineering literature differ in the form of the internal stress $\sigma_{\textrm{int}}$ they suggest, as shown by Table~\ref{Table}, and the arguments leading to the specific choice of $\sigma_{\textrm{int}}$ can not always be rigorously justified.

\medskip

In contrast to these  mostly phenomenologically derived expressions, the convergence results of Theorem~\ref{th:main} offer a rigorous characterization of the limiting internal stress $\sigma_{\textrm{int}}$ in the different cases corresponding to our limit models $k=1,\dots,5$.
The Euler-Lagrange equations of the limit functionals $E^{(k)}$, expressed in terms of the measure $\mu$ or the density $\rho$, are (taking $c=1$ for simplicity):

\begin{align*}
&(k=1)& \quad -\frac1{\pi^2}\partial_x\left(\log \ast\mu\right) + 1 = 0; &\qquad\sigma^{(1)}_{\textrm{int}}  = \frac1{\pi^2}\partial_x\left(\log\ast\mu\right);\\
\bigskip
&(k=2)& \quad \partial_x\left(V\ast\mu\right) + 1 = 0; &\qquad\sigma^{(2)}_{\textrm{int}} = - \partial_x\left(V\ast\mu\right);\\
\bigskip
&(k=3)& \quad \left(\int_{\mathbb{R}} V(t)\,dt\right)\partial_x\rho + 1 = 0;
 &\qquad\sigma^{(3)}_{\textrm{int}} = - \left(\int_{\mathbb{R}} V(t)\,dt\right)\partial_x\rho;\\
\bigskip
&(k=4)& \quad \frac{1}{\rho^3}\, V''_{\textrm{eff}}\left(\frac{1}{\rho}\right)\partial_x\rho -1 = 0;
&\qquad\sigma^{(4)}_{\textrm{int}} = \frac{1}{\rho^3}\, V''_{\textrm{eff}}\left(\frac{1}{\rho}\right)\partial_x\rho.
\end{align*}

We leave out the case $k=5$ since its Euler-Lagrange equation is too degenerate to be  useful.

\medskip

The Euler-Lagrange equation in case $k=1$ coincides with the one derived by Eshelby, Frank, and Nabarro~\cite{EshelbyFrankNabarro51} and Head and Louat~\cite{HeadLouat55} in the case of $n$ dislocations in one slip plane--rather than $n$ dislocation walls. This is consistent with the fact that when $\beta_n\ll 1/n$, the dislocation walls are much closer to each other horizontally than the vertical spacing $h_n$ (see the discussion of $\beta_n$ above), and therefore an approximation by a single-slip-plane setup seems appropriate.

The internal stress $\sigma^{(3)}_{\textrm{int}}$ coincides with the one proposed by Evers, Brekelmans and Geers~\cite{EversBrekelmansGeers04}.
As far as we know, the limiting energies $E^{(k)}$ for $k=2,4$ and the internal stress associated with them have not been mentioned in the engineering literature yet.

\medskip
Groma, Csikor, and Zaiser~\cite{GromaCsikorZaiser03} derived the internal stress $\sigma^{(GCZ)}_{\textrm{int}} = -\partial_x\rho/\rho$ (up to constants) starting from a discrete distribution of dislocations where the horizontal and vertical separation of the dislocations is of the same order.
In our formulation this corresponds to the case $k=4$, $\beta_n\sim 1$. Therefore it is interesting to compare $\sigma^{(GCZ)}_{\textrm{int}}$ with $\sigma_{\mathrm{int}}^{(4)}$. As it turns out, $\sigma^{(GCZ)}_{\textrm{int}}$ can be \emph{formally} obtained from $\sigma_{\textrm{int}}^{(4)}$ by making two approximations. The first  consists in disregarding the interaction between walls that are not nearest neighbours, which is equivalent to replacing the effective potential $V'_{\textrm{eff}}$ with $V'$. The second approximation is to substitute the force $V'(s)$ with its first-order Taylor-Laurent expansion close to zero, namely $- \frac{1}{\pi^2 s}$. Via these two approximations $\sigma_{\mathrm{int}}^{(4)}$ reduces (up to a constant) to $\sigma^{(GCZ)}_{\textrm{int}}$.

Although this derivation can formally be made, it {can not} be made rigorous, since the two approximations are mutually incompatible. In fact,
if Taylor-expanding $V$ is sensible, then the logarithmic singularity of $V$ implies that there is interaction with all neighbours, as is always the case for logarithmic interactions (and as is the case for case $k=1$ above). Therefore neglecting all but nearest neighbours is unjustified.
Moreover, the Taylor expansion of $V'_{\mathrm{eff}}$ and of $V'$ close to zero are quite different since when $s$ is small,\footnote{This follows from the two inequalities (we recall that $V$ is a decreasing function in $(0,\infty)$)
\[
V_{\mathrm{eff}}(s) = \sum_{k=1}^\infty V(ks) \leq\sum_{k=1}^\infty \frac1s \int_{(k-1)s}^{ks} V(t)\, dt =  \frac1s\int_0^\infty V(t)\, dt,
\]
and
\[
V_{\mathrm{eff}}(s) = \sum_{k=1}^\infty V(ks) \geq \sum_{k=1}^\infty \frac1s \int_{ks}^{(k+1)s} V(t)\, dt = \frac1s \int_s^\infty V(t)\, dt.
\]
}
  $V(s)\approx -1/\pi^2\log|s|$, while\begin{equation}
\label{asymp:Veff}
V_{\mathrm{eff}}(s)\approx \dfrac1{2|s|}\int_\R V.
\end{equation}
Therefore truncating to nearest neigbours (replacing $V_{\mathrm{eff}}$ by $V$) and then Taylor-expanding $V'$ results in a large error. We refer to the companion paper \cite{GPPSMech} for further discussions on this point and for a more detailed comparison between the internal stresses $\sigma^{(k)}_{\textrm{int}}$ obtained from our derivation and the models proposed in the engineering literature.

\subsection{Related mathematical work on discrete-to-continuum transitions.}
The model of this paper lies halfway between one and two dimensions. Written as~\eqref{discreteenergy}, it is a one-dimensional system, and an example of the general class of two-point interaction energies. There is a large body of research on this type of energy, which roughly falls into two categories. When the interaction energy is superlinear at infinity, the system models the behaviour of elastic solids, and examples of Gamma-convergence of such functionals are given in~\cite[Th.~1.22]{BraidesMaria06} (see also \cite{AliCic} and \cite{ACP}).
When the functional is bounded at infinity with a global minimum at finite distance, such as in the case of the Lennard-Jones potential $r \mapsto r^{-12}-r^6$ or the Blake-Zisserman potential $r\mapsto \min\{r^2,1\}$~\cite{BlakeZisserman}, such two-point interaction energies lead to models of fracture (see e.g. \cite{BraidesDalMasoGarroni99}, \cite{BraidesCicalese07} and \cite{BLO}).
The functional $V$ in~\eqref{defV} is neither of these, being purely repelling and convex away from the singularity. While the methods that we use are inspired by the general works in this area, we know of no work that deals specifically with this type of functional.

There are various previous works that focus on the behaviour of minimizers rather than on the functional. The early work by Eshelby, Frank and Nabarro~\cite{EshelbyFrankNabarro51} mentioned before studies the case of a single row of dislocations ($h=\infty$) and proves rigorously the asymptotic distribution of the dislocations.
Hall~\cite{Hall11} studies the wall setup, chooses the specific regime $\beta_n\sim n^{-1/2}$, and proves convergence of stationary states using formal methods. Finally we should mention the numerical study~\cite{DeGeusPeerlingsHirschberger11TR} in which the correct asymptotic scaling of the regime $1/n\ll\beta_n\ll 1$ was already found.
Mesarovic and collaborators~\cite{BaskaranMesarovicetc10,MesarovicBaskaranetc10} derive a continuum dislocation model from the discrete wall setup by means of a two-step upscaling: first the dislocations are smeared out in the slip plane and then in the vertical direction. Upscaling in the two directions separately, though, produces a significant error (referred to by the authors as ``the coarsening error") that needs to be corrected by adding an ad hoc term to their continuum model.

\medskip

At the same time, the structure of the walls in Figure~\ref{Wall} is an attempt to make some progress in the  problem of upscaling two-dimensional collections of dislocations. This is a hard problem, and the main difficulty can be recognized as follows. If we consider a field of edge dislocations in two dimensions at points $\{\bx_i\}_{i=1}^N\subset \R^2$, and formulate the corresponding empirical measure on~$\R^2$,
\[
\mu_N := \frac1N \sum_{i=1}^N \delta_{\bx_i}
\]
then the interaction energy for this system is essentially
\[
\iint_{\R^2\times \R^2} V_{\mathrm {edge}}(\bx-\by) \mu_N(d\bx)\mu_N(d\by),
\qquad\text{where}\qquad
V_{\mathrm{edge}}\bigl((x_1,x_2)\bigr) = \frac{x_1^2}{x_1^2+x_2^2}
- \frac12 \log (x_1^2+x_2^2).
\]
The function $V_{\mathrm{edge}}$ is singular at the origin, and therefore a simple weak convergence of $\mu_N$ in the sense of measures to some $\mu$ does not allow us to pass to the limit.

To make things worse, $\partial_{x_1}V_{\mathrm{edge}}$ takes both signs along the line $x_1=\textrm{constant}$.  This indeterminacy causes a phenomenon of cancellation, and surprisingly this cancellation can be \emph{complete}~\cite{RoyPeerlingsGeersKasyanyuk08}: if we consider a continuous vertical line of smeared-out edge dislocations  (i.e. the limit of a wall when $h\to0$), then the total force exerted by this continuous wall on any other edge dislocation \emph{vanishes}~\cite{RoyPeerlingsGeersKasyanyuk08}.  This cancellation is the reason why the tails of $V$ decay exponentially, even though $V_{\mathrm{edge}}$ only decays logarithmically.

Because of the multiple signs of $\partial_{x_1} V_{\mathrm {edge}}$ and this cancellation, also a more advanced argument along the lines of~\cite{SandierSerfaty10TR} does not apply. Indeed, the results of this paper show how the relative spacing in horizontal and vertical directions has a major impact on the limiting energy. This relative spacing, the aspect ratio of the lattice of dislocations, is weakly characterised by $\beta_n$, which we discuss below.

\subsection{Comments}
In this section we collect a number of comments on the discrete model and on the results of this paper.

\bigskip

\emph{Conditions on $V$.} While we perform the calculations in this paper for the exact functional $V$ in~\eqref{defV}, with minor changes the results can be generalized to any function $V$ satisfying
\begin{enumerate}
\item $V:\R\to\R$ is non-negative, even, and convex on $(0,\infty)$;
\item $V$ has a logarithmic singularity at the origin;
\item $V$ has exponential tails.
\end{enumerate}

\bigskip

\emph{On the choice of $\Gamma$-convergence.} Our $\Gamma$-convergence result implies convergence of minimizers, and is stronger in a number of ways. For instance, $\Gamma$-convergence of $E_n^{(k)}$ implies that $E_n^{(k)}+F$ also $\Gamma$-converges whenever $F$ is continuous. This allows us to deduce a similar convergence result, for instance, for a functional of the form
\[
\sum_{i=1}^n\sum_{\stackrel{j=0}{j\neq i}}^n V\left(\frac{\tilde x_i-\tilde x_j}{h}\right) + \sum_{i=1}^n f(\tilde x_i),
\]
for any continuous function $f$, allowing us to consider more general, non-constant forcing terms. If in addition $\liminf_{x\to\infty} f(x)=+\infty$, then a similar compactness result also holds. Note, however, that such a functional obviously behaves differently under rescaling of the $\tilde x_i$.

A second reason why $\Gamma$-convergence is a stronger result is the role that it plays in convergence of the corresponding \emph{evolutionary} problems, i.e. the ordinary differential equations~\eqref{eq:GFn}. That system is a gradient flow, and a method such as in~\cite{SandierSerfaty04} makes use of the $\Gamma$-convergence of $\E$ (and other properties) to pass to the limit $n\to\infty$ in such a system.

\bigskip

\emph{Connection between the limit functionals.} The transitions between the five different limiting functionals of Theorem~\ref{th:main} are continuous. For instance, if in $E^{(2)}$ in~\eqref{def:limitenergyE2} we take the limit $c\to\infty$, then $ s\mapsto cV(cs)$ converges to $(\int V)\delta$, and we recognize the corresponding {single} integral in~\eqref{def:limitenergyE3}.
In the case of $E^{(4)}$, in the limit $c\to0$ we approximate $V_{\mathrm {eff}}(s)$ by its leading order Taylor-Laurent development at the origin, which is $(1/2s)\int_\R V$ by \eqref{asymp:Veff}, upon which $E^{(4)}$ becomes equal to $E^{(3)}$.

Similar transitions exist from $E^{(2)}$ to $E^{(1)}$ in the limit $c\to 0$, and from $E^{(4)}$ to $E^{(5)}$ in the limit $c\to\infty$.

\bigskip

\emph{Boundary layers.} Figure~\ref{DC3} shows a good match over most of the domain, with a sharp boundary layer near the origin. The reason for this boundary layer can be recognized in the fact that $1/n\beta_n\approx 0.08$ is about one order of magnitude smaller than the domain of the density. Such boundary layers are well known in the theory of interacting particles with next-to-nearest neighbours (see e.g.~\cite{BraidesCicalese07}), and we believe that the effect here is similar. Note that in Figure~\ref{DC4} the boundary layer is thinner, and indeed there $1/n\beta_n\approx 0.006$.

\bigskip

\emph{Generalisations.}
Baskaran et al.~\cite{BaskaranMesarovicetc10} and \cite{MesarovicBaskaranetc10} study the same setup with arbitrary angle between the slip planes and the obstacle. They point out that orthogonal slip planes are a  special case among all angles, and an obvious avenue of generalization is to understand the general case. Other generalizations include dislocations of multiple signs, creation and annihilation effects, and convergence of the evolution equations.

\subsection{Organisation of this paper}
In Section~\ref{sec:preliminaries} we prepare the stage for the main proofs, by  introducing equivalent formulations for the rescaled energies and a characterization for the lower-semicontinuity of the limit functionals.
Section~\ref{sec:proof} is devoted to the proofs of the five cases of Theorem~\ref{th:main}, and Theorem~\ref{lem:uniqueness} and Corollary~\ref{cor:minimizers-converge} are proved in Section~\ref{sec:uniqueness}.

\section{Preliminaries}
\label{sec:preliminaries}

In this section we collect a number of preliminary steps leading to the proof of  Theorem~\ref{th:main}. We start with rewriting the discrete functionals in a number of different, equivalent forms. In Section~\ref{Comp:Section} we derive an equivalent characterization of the weak convergence of measures, and in Section~\ref{subsec:lsc} we characterize the lower semicontinuous envelope of functionals of the form $\int f(u')$.

\subsection{Notation}
Here we list some symbols and abbreviations that are going to be used throughout the paper.

\medskip

\noindent
\begin{small}
\begin{tabular}{lll}
$\Omega$        & domain $[0,\infty)$ &\\
$\Lebesgue$ & one-dimensional Lebesgue measure restricted to $\Omega$\\
$\mathcal M(\Omega)$ & non-negative Borel measures on $\Omega$ of mass $1$ &\\
$C_b(A)$ & continuous and bounded functions in $A\subseteq\mathbb{R}$\\
$||\mu||_{TV(A)}$ & total variation of $\mu\in\mathcal M(\Omega)$ in $A\subset \dom$  \\
$\d \nu/\d \mu$ & Radon-Nikodym derivative of $\nu$ with respect to $\mu$\\
$\wmm$ & $\mu_n\otimes\mu_n$ without the diagonal terms (see \eqref{def:wmm})\\
$E_n^{(k)}$, $k\in\{1,2,3,4,5\}$ & discrete energies (see Theorem~\ref{th:main} and Section \ref{subsec:rewriting})\\
$E^{(k)}$, $k\in\{1,2,3,4,5\}$ & limit energies (see Theorem~\ref{th:main})
\end{tabular}
\end{small}

\medskip

Also, we write e.g. $\int_\dom f\, d\mu$ instead of $\int_0^\infty f\, d\mu$, since the latter is ambiguous when~$\mu$ has an atom at zero.

\subsection{Rewriting the functionals}
\label{subsec:rewriting}

The continuum limit functionals $E^{(1)}$ and $E^{(2)}$ are convolution integrals, and this suggests reformulating the corresponding functionals at finite $n$ also as convolution integrals. For given $\mu_n =\frac1n \sum_{i=1}^n \delta_{x_i}$, we define the measure $\wmm$ as the product measure $\mu_n\otimes\mu_n$ without the diagonal:
\begin{equation}
\label{def:wmm}
\wmm(A) := \frac1{n^2}\sum_{i=1}^n\sum_{\substack{j=1\\j\not=i}}^n \delta_{(x_i,x_j)}(A)
\qquad\text{for any Borel set }A\subset\doms.
\end{equation}
Omitting the diagonal does not change the limiting behaviour:
\begin{lem}
\label{lem:boxtimes}
If $\mu_n\weakto \mu$, then $\wmm\weakto \mu\otimes\mu$.
\end{lem}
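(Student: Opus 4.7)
The plan is to handle this in two steps: first establish that the full product measures satisfy $\mu_n\otimes\mu_n \weakto \mu\otimes\mu$, and then show that the diagonal contribution $\mu_n\otimes\mu_n - \wmm$ vanishes in the limit. Combining the two yields the claim.

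For step one, since $\mu_n$ and $\mu$ are probability measures on the Polish space $\Omega=[0,\infty)$, the hypothesis $\mu_n \weakto \mu$ gives tightness of $\{\mu_n\}$ by Prokhorov's theorem, and therefore tightness of the product family $\{\mu_n\otimes\mu_n\}$ on $\doms$. To identify the limit, I would first test on tensor products $\Phi(x,y)=\varphi(x)\psi(y)$ with $\varphi,\psi\in C_b(\Omega)$, for which Fubini gives
\[
\int_\doms \varphi(x)\psi(y)\, d(\mu_n\otimes\mu_n) = \Bigl(\int_\Omega \varphi\, d\mu_n\Bigr)\Bigl(\int_\Omega \psi\, d\mu_n\Bigr) \longrightarrow \Bigl(\int_\Omega \varphi\, d\mu\Bigr)\Bigl(\int_\Omega \psi\, d\mu\Bigr) = \int_\doms \varphi(x)\psi(y)\, d(\mu\otimes\mu).
\]
A Stone--Weierstrass argument on compact rectangles, combined with the tightness bound to control the tails, then extends this convergence to arbitrary $\Phi\in C_b(\doms)$, so that $\mu_n\otimes\mu_n\weakto\mu\otimes\mu$.

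For step two, I would simply compute directly from the definition \eqref{def:wmm} that
\[
\mu_n\otimes\mu_n - \wmm = \frac{1}{n^2}\sum_{i=1}^n \delta_{(x_i,x_i)},
\]
a non-negative measure of total mass $1/n$. Consequently, for any $\Phi\in C_b(\doms)$ with $\|\Phi\|_\infty = M$,
\[
\Bigl|\int_\doms \Phi\, d(\mu_n\otimes\mu_n) - \int_\doms \Phi\, d\wmm\Bigr| \leq \frac{M}{n} \longrightarrow 0.
\]
Combining this with step one yields $\int \Phi\, d\wmm \to \int \Phi\, d(\mu\otimes\mu)$ for every $\Phi\in C_b(\doms)$.

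The main (and essentially only) obstacle is the identification of the limit in step one, since tensor-product functions are dense in $C_b$ only on compact sets. This is why tightness enters: it lets us truncate to a compact set $K\times K$ where Stone--Weierstrass applies, and control the residual on $\doms \setminus (K\times K)$ uniformly in $n$. This is a standard fact about probability measures on Polish spaces, so the argument is routine once set up correctly.
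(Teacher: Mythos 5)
Your proof is correct and follows essentially the same route as the paper: both split $\wmm$ into $\mu_n\otimes\mu_n$ minus the diagonal part of mass $1/n$, and use weak convergence of the full product measures. The only difference is that you spell out the (standard) fact that $\mu_n\weakto\mu$ implies $\mu_n\otimes\mu_n\weakto\mu\otimes\mu$ via tightness and Stone--Weierstrass, which the paper simply asserts.
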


\begin{proof}
Take $\varphi\in C_b(\doms)$. Then
\[
\int_\doms \varphi \, d\wmm  - \int_\doms \varphi\, d\mu\otimes\mu
= \int_\doms\varphi \, d(\wmm-\mu_n\otimes\mu_n) +
  \int_\doms \varphi \, d(\mu_n\otimes\mu_n - \mu\otimes\mu).
\]
The second term on the right-hand side converges to zero since $\mu_n\weakto \mu$, and the first is bounded by $ \|\varphi\|_\infty/n$ and therefore also converges to zero.
\end{proof}

With this notation we can write $E_n^{(1)}$ in a number of different, equivalent forms:
\begin{align}
E_n^{(1)}( x_1,\dots,x_n) &= \frac1{n^2K_n}\, {\E(\tilde x_1,\dots,\tilde x_n)} + \frac1{2\pi^2} (\log 2\pi n^2\beta_n^2 -1)\notag\\
&= \frac1{2n^2} \sum_{i=1}^n\sum_{\substack{j=1\\j\not=i}}^n \tilde V_n(n^2\beta_n^2 ( x_i- x_j)) + \frac1n \sum_{i=1}^n  x_i\notag\\
&= \frac1{n^2} \sum_{k=1}^n\sum_{j=1}^{n-k} \tilde V_n(n^2\beta_n^2 ( x_{j+k}- x_j)) + \frac1n \sum_{i=1}^n  x_i\notag\\
&= \frac1{2} \int_\doms \tilde V_n(n^2\beta_n^2(x-y))\, \wmm(dxdy) + \int_\dom x\, \mu_n(dx).
\label{rewrite-E1}
\end{align}
Here $\tilde{V}_n(s):= V(s) + \pi^{-2}(\log(2\pi n^2\beta_n^2) - 1)$ is a \textit{renormalized} energy, obtained by removing a \textit{core energy} from the energy density $V$.

Similarly we rewrite
\begin{align}
E_n^{(2)}( x_1,\dots, x_n) &= E_n^{(3)}( x_1,\dots, x_n) = E_n^{(4)}( x_1,\dots, x_n)\notag\\
&= \frac{\beta_n}{nK_n}\; \E(\tilde x_1,\dots,\tilde x_n)\notag\\
&= \frac{\beta_n}{n} \sum_{k=1}^n\sum_{j=1}^{n-k} V(n\beta_n(x_{j+k}-x_j)) + \frac1n \sum_{j=1}^n x_j\notag\\
&= \frac{n\beta_n}2 \iint_\doms V(n\beta_n(x-y))\, \wmm(dxdy) + \int_\dom x\, \mu_n(dx),
\label{rewrite-E24-integral}
\end{align}

and

\begin{align}
E_n^{(5)}( x_1,\dots, x_n) &= \frac{2\pi}{nK_n}\frac{\beta_n^2}{\log\left(\frac{2}{\pi}\beta_n^2\right)}\; \E(\tilde x_1,\dots,\tilde x_n)\notag\\
&= \frac{2\pi\beta_n^2}{n\log\left(\frac{2}{\pi}\beta_n^2\right)} \sum_{k=1}^n\sum_{j=1}^{n-k} V\left(\frac{n}{2\pi}\,\log\left(\frac{2\beta_n^2}{\pi}\right)(x_{j+k}-x_j)\right) + \frac1n \sum_{j=1}^n x_j.
\label{rewrite-E5}
\end{align}


\subsection{Convergence concepts and compactness}\label{Comp:Section}

As already discussed in the introduction, there are two natural ways of describing the positions of a row of dislocation walls:
\begin{enumerate}
\item[(i)] The position $x^n_i$ as a function of particle number $i$. One can make this formulation slightly more useful by reformulating it in terms of increasing functions $\xi^n:[0,1]\to\dom$,  such that $\x^n(i/n) = x^n_i$, with linear interpolation.
\item[(ii)] A measure $\mu_n = \frac1n \sum_{i=0}^n \delta_{x^n_i}$.
\end{enumerate}

In the introduction we mentioned the formulation in terms of measures as the basis for convergence results. However, in the proofs it will sometimes be useful to use the formulation in terms of  functions $\xi^n$. Since we intend the resulting $\Gamma$-convergence to be independent of which formulation we choose,  we choose a single concept of convergence and formulate this equivalently for $\x^n$ and for $\mu_n$. This is the content of the next theorem.

\begin{thm}\label{compactness}
Let $(x_i^n)$ be a sequence of n-tuples such that $x_0^n=0$ and $x_i^n \leq x_{i+1}^n$ for every $n$ and for every $i=0,\dots,n-1$.
Let $\xi^n:(0,1)\to \mathbb{R}_+$ be the affine interpolations of $x_i^n$, i.e.
\begin{equation}\label{pa}
\x^n(s) := x^n_i + n(x^n_{i+1}-x^n_i)\left(s-\frac{i}{n}\right), \quad \mbox{for}\quad s\in \left(\frac{i}{n},\frac{i+1}{n}\right),
\end{equation}
and define the measures $\mu_n \in \mathcal{M}(\dom)$ by
\begin{equation}
\label{def:mu_n}
\mu_n := \frac{1}{n}\sum_{i=1}^n \delta_{x_i^n}.
\end{equation}
Then the following convergence concepts are equivalent:
\begin{enumerate}
\item[(i)] 
$\x^n$ converges to $\x$ in $BV(0,1-\delta)$ for each $0<\delta<1$ (we indicate this as `convergence in $\BVloc(0,1)$');
\item[(ii)] 
$\mu_n $ converges weakly to $\mu$.
\end{enumerate}
If the limit function $\x$ is  strictly increasing and a.e.\ approximately differentiable, then it is related to the limit measure $\mu$ by the formula
\begin{equation}
\label{link:mu-xi}
\mu(dy) = \frac{dy}{\x'(\x^{-1}(y))}.
\end{equation}

Finally, if
\begin{equation}\label{D:L1}
\sup_n \frac1n\sum_{i=1}^n x^n_i < \infty,
\end{equation}
then the sequences $\mu_n$ and $\x^n$ are compact in this topology.
\end{thm}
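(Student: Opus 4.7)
Define the distribution function $F_n(y) := \mu_n([0,y])$, a right-continuous step function jumping by $1/n$ at each $x_i^n$, and its pseudo-inverse $F_n^{-1}(s) := \inf\{y : F_n(y) \geq s\}$, which takes the value $x_i^n$ on each interval $((i-1)/n, i/n]$. Comparing this step function with the piecewise linear $\x^n$ from~\eqref{pa} yields the sandwich
$$\x^n(s) \leq F_n^{-1}(s) \leq \x^n(s+1/n)$$
on every cell. Thus $\x^n$ and $F_n^{-1}$ differ, on each $(i/n,(i+1)/n)$, by at most the cell width $x_{i+1}^n - x_i^n$, and their a.e.\ limits agree at every point $s$ for which the limit measure has no atom at $F^{-1}(s)$.

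\textbf{With the dictionary in place, both implications reduce to standard facts on monotone functions.} Weak convergence $\mu_n \weakto \mu$ on $\dom$ is equivalent to pointwise convergence $F_n(y) \to F(y) := \mu([0,y])$ at every continuity point of $F$, which in turn is equivalent (quantile duality) to pointwise a.e.\ convergence of the pseudo-inverses $F_n^{-1}$ on $(0,1)$. The sandwich transfers this to pointwise a.e.\ convergence $\x^n \to \x := F^{-1}$ on each $(0,1-\delta)$. Picking a continuity point $s_0 \in (1-\delta, 1)$, monotonicity gives $\x^n(s) \leq \x^n(s_0) \to \x(s_0) < \infty$ uniformly in $n$ and $s \in (0, 1-\delta)$; dominated convergence then upgrades pointwise to $L^1(0,1-\delta)$ convergence, and the convergence of total variations $\x^n(1-\delta) - \x^n(0) \to \x(1-\delta) - \x(0)$ at continuity points gives full $BV(0,1-\delta)$ convergence. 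The reverse direction is immediate: $\BVloc$ convergence yields pointwise a.e.\ convergence of $\x^n$, the sandwich transfers this to $F_n^{-1}$, and quantile duality returns weak convergence of $\mu_n$.

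\textbf{The formula~\eqref{link:mu-xi} and compactness follow cleanly.} When $\x$ is strictly increasing and a.e.\ approximately differentiable, $\mu$ is the pushforward $\x_\#\Lebesgue|_{(0,1)}$, and the change of variable $y = \x(s)$, $dy = \x'(s)\,ds$, produces \eqref{link:mu-xi}. For compactness under~\eqref{D:L1}, observe $\int_\dom y\,\mu_n(dy) = \frac1n\sum_i x_i^n$ is uniformly bounded, so Markov's inequality yields tightness on $\dom$ and Prokhorov extracts a weakly convergent subsequence. Equivalently, monotonicity gives $\x^n(1-\delta) \leq \delta^{-1}\int_{1-\delta}^1 \x^n \leq \delta^{-1}\int_0^1 \x^n \leq C/\delta$, producing a uniform $BV(0,1-\delta)$ bound; Helly's selection theorem together with a diagonal argument in $\delta$ extracts a $\BVloc$-convergent subsequence, consistent with the equivalence.

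\textbf{Anticipated obstacle.} The only delicate point is verifying that pointwise a.e.\ convergence transfers correctly through the sandwich: one must check that almost every $s \in (0,1)$ is a \emph{good} point, in the sense that $\mu$ has no atom at $F^{-1}(s)$, so that the squeeze between $\x^n(s)$ and $\x^n(s+1/n)$ forces them to the same limit. This is automatic since a monotone function has at most countably many jumps, so the atomic part of $\mu$ loads only a Lebesgue-null set of $s$-values. All remaining ingredients---quantile duality, Helly's theorem, and pushforward change of variables---are standard.
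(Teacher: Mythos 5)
Your proof is correct, and it is worth noting that your quantile function $F_n^{-1}$ is precisely the piecewise-constant interpolant $\overline{\x}^n$ that the paper introduces (it equals $x^n_i$ on $((i-1)/n,i/n]$), so the implication (i) $\Rightarrow$ (ii) and the compactness statement run essentially parallel to the paper: both reduce to the pushforward identity $\int\varphi\,d\mu_n=\tfrac1n\varphi(0)+\int_0^1\varphi(F_n^{-1}(s))\,ds$ plus a.e.\ convergence, and both get tightness from the first-moment bound. Where you genuinely diverge is in (ii) $\Rightarrow$ (i): the paper derives a uniform $W^{1,1}(0,1-\delta)$ bound from tightness and invokes weak-\textasteriskcentered\ $BV$ compactness, which by itself only yields subsequential convergence to an unidentified limit; your route through quantile duality ($F_n\to F$ at continuity points iff $F_n^{-1}\to F^{-1}$ a.e.) identifies the limit as $F^{-1}$ pointwise a.e.\ and then upgrades via the uniform bound $\x^n(s)\le F_n^{-1}(s_0)$ and dominated convergence, so you get convergence of the full sequence with an explicit limit in one pass. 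That is arguably cleaner and makes the $\mu\leftrightarrow\x$ dictionary, and hence \eqref{link:mu-xi}, completely transparent. One small inaccuracy to fix in the write-up: the points where the squeeze $F_n^{-1}(s-1/n)\le\x^n(s)\le F_n^{-1}(s)$ could fail to close are the \emph{jump points of $F^{-1}$}, which correspond to gaps in $\supp\mu$, not to atoms of $\mu$ (atoms of $\mu$ produce constancy intervals of $F^{-1}$, which are harmless); your conclusion is unaffected because either way the bad set is the countable jump set of a monotone function, hence Lebesgue-null, but the justification as stated points at the wrong feature of $\mu$.
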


Note that the weak topology on the space of non-negative Borel measures $\M(\Omega)$ of unit mass is generated by a metric (see e.g.~\cite[Remark~5.1.1]{AmbrosioGigliSavare05} or~\cite[p.~72]{Billingsley99}), and therefore there is no need to distinguish between compactness and sequential compactness.

\begin{proof}
First we prove (i) $\Longrightarrow$ (ii). Let $\overline{\x}^n:[0,1]\to\mathbb{R}_+$ denote the piecewise constant function such that $\overline{\x}^n(s)=x^n_i$ for $s\in \left(\frac{i-1}{n},\frac{i}{n}\right]$, for every $i=1,\dots,n$. We have for sufficiently large $n$,
\[
0\leq \int_0^{1-\delta} (\overline \x^n(s)-\x^n(s))\, ds
\leq \frac1n\sum_{i=0}^{\lceil n(1-\delta)\rceil} (x^n_{i+1}-x^n_i)
\leq \frac1n \|\x^n\|_{TV(0,1-\delta/2)} \stackrel{n\to\infty}\longrightarrow 0,
\]
so that $\overline{\x}^n \to \x$ in $L^1_{\mathrm{loc}}(0,1)$ and, after extracting a subsequence without changing notation, $\overline{\x}^n \to \x$ pointwise a.e.

Let now $\varphi\in C_b(\mathbb{R})$ be a test function (for the weak convergence of measures); then
\begin{align*}
\int_{-\infty}^{\infty}\varphi(y)\,\mu_n(dy)&= \frac1n\sum_{i=0}^n \varphi(x^n_i) =
\frac1n \varphi(0) + \sum_{i=0}^{n-1}\int_{\frac{i}{n}}^{\frac{i+1}{n}}\varphi(\overline{\x}^n(s))\,ds = \frac1n \varphi(0) + \int_0^1\varphi(\overline{\x}^n(s))\,ds,
\end{align*}
and since $\overline{\x}_n\to \x$ a.e.,
\[
\lim_{n\to\infty}\int_0^1\varphi(\overline{\x}^n(s))\,ds = \int_0^1\varphi(\x(s))\,ds.
\]
By the uniqueness of this limit the whole sequence $\mu_n$ converges. By defining $\mu\in \M(\dom)$ through
\begin{equation}
\label{rel:mu-xi}
\forall \varphi\in C_b(\R): \qquad \int_{-\infty}^{\infty}\varphi(y)\,\mu(dy) = \int_0^1\varphi(\x(s))\,ds,
\end{equation}
we have proved that $\mu_n\weakto \mu$.

The identity~\eqref{rel:mu-xi} expresses the property that $\mu$ is the push-forward under $\x$ of the Lebesgue measure $ds$ on $(0,1)$. It follows by~\cite[Lemma~6.5.2]{AmbrosioGigliSavare05} that whenever $\x$ is strictly increasing and a.e. approximately differentiable, then
\[
\mu(dy)= \frac{dy}{\x'(\x^{-1}(y))}.
\]

Next we prove (ii) $\Longrightarrow$ (i). Since $\mu_n$ is assumed to be of the form~\eqref{def:mu_n}, we can construct the positions $x_i^n$ and the linear interpolation $\x^n$ as above. The convergence of $\mu_n$ implies that the sequence $\mu_n$ is tight, which implies in turn that for each $\delta>0$,
\[
\sup_n \sup_{i:i/n\leq 1-\delta} x^n_i <\infty,
\]
and therefore that $\sup_n \x^n(1-\delta)=: M<\infty$.

Therefore, since $\x^n(0)=0$ by \eqref{pa}, we have the bound
\begin{equation}
\int_0^{1-\delta} (1-s) (\x^n)'(s)\,ds =  \delta \x^n(1-\delta) + \int_0^{1-\delta}\x^n(s)\,ds \leq M.
\end{equation}
Therefore, using the monotonicity of $x^n$ we have that
\[
M  \geq \int_0^{1-\delta} (1-s)(\x^n)'(s)\,ds \geq \delta \int_0^{1-\delta}|(\x^n)'(s)|\,ds.
\]
This provides a uniform bound for $(\x^n)'$ in $L^1(0,1-\delta)$, and by integration also a uniform bound on $\x^n$ in $L^1(0,1-\delta)$.
Hence the sequence $(\x^n)$ is equibounded in $W^{1,1}(0,1-\delta)$, and therefore converges in $L^1(0,1-\delta)$ and weakly-\textasteriskcentered\ in $BV(0,1-\delta)$ to
a function $\x\in BV(0,1-\delta)$.

Finally, the compactness of the sequence $\mu_n$ follows from the tightness implied by~\eqref{D:L1} and the estimate
\[
\int_\R |x|\,\mu_n(dx)  = \int_0^1 |\x^n(s)|\, ds \leq  \frac1n \sum_{i=1}^n x^n_i.
\]
\end{proof}

\begin{rem}
Note that the limit function $\x$ introduced in the previous theorem is increasing, since it is the pointwise limit of a sequence of increasing functions.
\end{rem}

\subsection{Lower semicontinuity and relaxation}\label{subsec:lsc}

This section is devoted to a lower semicontinuity result for functionals defined on the space of special
functions with bounded variation. More precisely, the next theorem provides an integral representation for the
relaxed functional in a special case.

\begin{thm}\label{relax}
Let $f:(0,\infty)\to\mathbb{R}$ be a convex and decreasing function such that $\lim_{t\to\infty}f(t)=0$.

Let $F:BV_{loc}(0,1)\to \mathbb{R}\cup\{\infty\}$ be the functional defined as
\begin{equation}
F(u):=\begin{cases}\label{W11}
\displaystyle\int_0^1 f(u')\,dt \quad & \mbox{if }\, u\in W^{1,1}(0,1), u \, \mbox{increasing},\\
+\infty \quad & \mbox{otherwise}.
\end{cases}
\end{equation}
Let $\mathcal{H}$ denote the lower semicontinuous envelope of $F$ (relaxation of $F$) on $BV_{loc}(0,1)$ with respect to the $BV_{\mathrm{loc}}(0,1)$-convergence defined in Theorem~\ref{compactness}.

We introduce the functional $\mathcal{F}:BV_{loc}(0,1)\to \mathbb{R}$ defined, for $u$ increasing, as
\begin{equation}\label{F:BV}
\mathcal{F}(u):= \int_0^1 f(u')\,dt,
\end{equation}
where $u'$ denotes the absolutely continuous part (with respect to the one-dimensional Lebesgue measure) of the measure $Du$, which is the distributional
gradient of $u$. Then we have
$$
\mathcal{F}= \mathcal{H}.
$$
\end{thm}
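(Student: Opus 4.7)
The proof goes by the two standard inequalities for a relaxation: $\mathcal{F}\leq \mathcal{H}$ (lower bound / lsc of $\mathcal F$) and $\mathcal{H}\leq \mathcal{F}$ (construction of a recovery sequence). Throughout, the crucial structural facts are: (i) $u\in BV_{\mathrm{loc}}(0,1)$ is increasing, so $Du = u'\,dt + D^s u$ with $u'\geq 0$ and $D^s u\geq 0$; (ii) $f$ is convex, decreasing, and $f(t)\to 0$ as $t\to\infty$, which in particular forces the recession slope $f^{\infty}(1) = \lim_{t\to\infty} f(t)/t$ to vanish.

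\textbf{Lower bound $\mathcal F\le\mathcal H$.} Since on $W^{1,1}$ we have $u'=u'_a$, the inequality $\mathcal F\le F$ is trivial. It therefore suffices to prove that $\mathcal F$ is $BV_{\mathrm{loc}}$-lower semicontinuous along increasing functions. Let $u_n\to u$ in $BV_{\mathrm{loc}}(0,1)$, with $u_n$ increasing; then $Du_n\rightharpoonup Du$ weakly as measures on $(0,1-\delta)$ for every $\delta>0$. By the classical integral representation of convex functionals on $BV$ (Reshetnyak / Goffman--Serrin type), the weak-$\ast$ lower semicontinuous envelope of $v\mapsto \int f(v')\,dt$ on increasing $BV$ functions equals
\[
v\mapsto \int_0^{1-\delta} f(v'_a)\,dt + f^{\infty}(1)\,|D^s v|\bigl((0,1-\delta)\bigr).
\]
Because $f$ is convex, decreasing, with $f(\infty)=0$, one has $f^{\infty}(1)=0$, so the singular contribution drops out. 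Applying this to $u_n\to u$ on $(0,1-\delta)$ and then letting $\delta\downarrow 0$ via monotone convergence gives $\liminf_n F(u_n)\ge \int_0^1 f(u'_a)\,dt = \mathcal F(u)$.

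\textbf{Upper bound $\mathcal H\le\mathcal F$.} Given $u$ increasing in $BV_{\mathrm{loc}}(0,1)$ with $\mathcal F(u)<\infty$, I will build a recovery sequence by mollification. Fix a standard symmetric mollifier $\rho_\e$ and set $u_\e := u\ast \rho_\e$ on $(\e,1-\e)$, extended linearly with slope $\e^{-1}$ on small end-zones so that $u_\e\in W^{1,1}(0,1)$ is still increasing; clearly $u_\e\to u$ in $BV_{\mathrm{loc}}(0,1)$. Inside $(\e,1-\e)$ we have $u_\e' = (Du)\ast\rho_\e = u'_a\ast\rho_\e + (D^s u)\ast\rho_\e \ge u'_a\ast\rho_\e$. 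Since $f$ is \emph{decreasing},
\[
f(u_\e'(t)) \le f\bigl((u'_a\ast\rho_\e)(t)\bigr),
\]
and by Jensen's inequality applied to the convex $f$,
\[
f\bigl((u'_a\ast\rho_\e)(t)\bigr) \le \bigl(f(u'_a)\ast\rho_\e\bigr)(t).
\]
Integrating and using Fubini, $\int_{\e}^{1-\e} f(u_\e')\,dt \le \int_0^1 f(u'_a)\,dt$. The contributions of the end-zones tend to $0$ because their length is $\e$ and $f(\e^{-1})\to 0$, so $\e\cdot f(\e^{-1})\to 0$. Hence $\limsup_{\e\to 0} F(u_\e)\le \mathcal F(u)$, and combined with the lsc already established this gives $F(u_\e)\to \mathcal F(u)$, proving $\mathcal H(u)\le \mathcal F(u)$.

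\textbf{Main obstacle.} The delicate point is the upper bound: the singular part $D^s u$ can be arbitrarily wild (Cantor staircase, countable jumps), and one must ensure that after smoothing it is absorbed into the tail region of $f$ where $f\approx 0$, without introducing an uncontrolled contribution elsewhere. The combination of (a) monotonicity of $f$ (to dominate $u_\e'$ by $u'_a\ast\rho_\e$, using $D^s u\ge 0$ which in turn uses that $u$ is increasing) and (b) Jensen's inequality (to control the convolution of $u'_a$) is exactly what makes the bound work. The hypothesis $f(\infty)=0$ is used both in eliminating the recession term in the lsc step and in handling the end-zone correction in the recovery step.
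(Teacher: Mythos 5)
Your proof is correct, and the lower bound is essentially the paper's (the paper cites Ambrosio--Fusco--Pallara for the $L^1_{\mathrm{loc}}$-lower semicontinuity of $u\mapsto\int f(u'_a)$, which is the same Reshetnyak/Goffman--Serrin fact you invoke, with the recession term vanishing because $f(\infty)=0$). The recovery sequence, however, is built differently. The paper first removes the jump set by replacing $u$ near each jump point $t^*$ by the affine interpolant on $(t^*-\varepsilon,t^*+\varepsilon)$ (the large slope costs $2\varepsilon f\bigl((u(t^*+\varepsilon)-u(t^*-\varepsilon))/2\varepsilon\bigr)\to 0$ by the decay of $f$), and then approximates the remaining singular part $D^su$ weakly in measure by nonnegative $L^1$ functions $g^\ell$, taking $u^\ell$ to be the primitive of $u'_a+g^\ell$; since $(u^\ell)'=u'_a+g^\ell\geq u'_a$ and $f$ is decreasing, the energy bound $\int f((u^\ell)')\le\int f(u'_a)$ is pointwise and requires no convexity. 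Your mollification handles jump and Cantor parts in one stroke and avoids the case analysis, at the price of an extra Jensen step (needed because $u'_a*\rho_\varepsilon$ is no longer equal to $u'_a$) and of the small boundary-zone surgery needed to land in $W^{1,1}(0,1)$ globally; both prices are paid correctly (Jensen applies since $\rho_\varepsilon\ge0$ has unit mass and $f$ is convex, the end-zones contribute $O(\varepsilon f(\varepsilon^{-1}))\to0$, and the unit of derivative mass they carry escapes to the boundary and is not seen by the $BV_{\mathrm{loc}}(0,1)$ convergence of Theorem~\ref{compactness}). Both arguments rest on the same two structural facts -- $D^su\ge0$ because $u$ is increasing, so that the singular part can only lower the energy of a decreasing $f$, and $f(\infty)=0$ to absorb large slopes -- so the difference is one of packaging rather than of substance.
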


\begin{proof}
We first note that by construction $\mathcal{F}\leq F$ on $BV_{loc}(0,1)$. Since $\mathcal F$ is lower semicontinuous with respect to strong convergence in $L^1_{\mathrm{loc}}(0,1)$, by e.g.~\cite[Proposition 5.1--Theorem 5.2]{AmbrosioFuscoPallara00},  it follows that
$$
\mathcal{F}\leq\mathcal{H}.
$$

For the opposite inequality we need to show that, for a given $u\in BV_{loc}(0,1)$, $u$ increasing, there exists an approximating sequence
$(u^\ell)\subset W^{1,1}(0,1)$, $u^{\ell}$ increasing for every $\ell$, such that $u^{\ell} \to u$ in $L^1_{\mathrm{loc}}$ and
\begin{equation}\label{rel:limsup}
\limsup_{\ell\to\infty}\int_0^1f\left((u^{\ell})'\right)\,dt \leq \int_0^1f(u')\,dt.
\end{equation}

\medskip

For the construction of the sequence $(u^\ell)$ we proceed as follows. We first approximate the distributional gradient $Du$ of $u$
with $L^1$ functions, say $w^\ell$, with respect to the weak convergence in measure. Then we construct approximations $u^\ell$ as (properly defined)
anti-derivatives of $w^\ell$ and will be therefore in $W^{1,1}$ by construction. This argument is strictly one-dimensional, since it makes use
of the property that every function is a gradient.

\medskip

We now go through the details of the proof.

\medskip

\emph{Step 1: Approximation of $Du$ with $L^1$ functions}. We decompose the distributional gradient $Du$ as $Du = u' + D^su$ into its absolutely continuous part and singular part with respect to the Lebesgue measure. Since $u$ is increasing, both $u'$ and $D^su$ are non-negative measures (being mutually singular).
We notice that the absolutely continuous gradient $u'$ (identified with its density with respect the Lebesgue measure) is by definition a nonnegative $L^1$-function; therefore it is sufficient to approximate the singular measure $D^su$ with nonnegative functions in $L^1$. Let $(g^\ell)$, with $g^\ell\in L^1(0,1)$ be such an approximation and define
\begin{equation}\label{L1:approxDu}
w^\ell:= u' + g^\ell;
\end{equation}
then $w^\ell \in L^1(0,1)$, $w^\ell\geq 0$ a.e. and $w^\ell \rightharpoonup Du$ weakly in measure.

\medskip

\emph{Step 2: Approximation of $u$.} We notice that, for the construction of the approximating sequence, we can assume that $S_u=\emptyset$.
Indeed, let us assume instead that $S_u \neq \emptyset$; by the locality of the argument we are going to use, it is not restrictive
to assume that $S_u=\{t^*\}$.

We define continuous approximations of $u$ as
\begin{equation*}
u_\varepsilon(t):=
\begin{cases}
\bigskip
u(t) &\mbox{if } \, t\in (0,t^*-\varepsilon)\cup(t^* + \varepsilon, 1)\\

u(t^* -\varepsilon) + \frac{u(t^*+\varepsilon) - u(t^*-\varepsilon)}{2\varepsilon}(t-t^* + \varepsilon) &\mbox{if } \, t\in (t^*-\varepsilon,t^* + \varepsilon).
\end{cases}
\end{equation*}
Then, clearly,
\begin{equation*}
u'_\varepsilon(t):=
\begin{cases}
\bigskip
u'(t) &\mbox{if } \, t\in (0,t^*-\varepsilon)\cup(t^* + \varepsilon, 1)\\

\frac{u(t^*+\varepsilon) - u(t^*-\varepsilon)}{2\varepsilon} &\mbox{if } \, t\in (t^*-\varepsilon,t^* + \varepsilon).
\end{cases}
\end{equation*}
For the approximating sequence $u_\varepsilon$ we have
\begin{align*}
\int_0^1f(u'_{\varepsilon}(s))ds
&=\int_{(0,1)\setminus(t^*-\varepsilon,t^* + \varepsilon)}f(u'(s))ds +
\int_{t^*-\varepsilon}^{t^* + \varepsilon}f\left(\frac{u(t^*+\varepsilon)- u(t^*-\varepsilon)}{2\varepsilon}\right)ds\\
&=\int_{(0,1)\setminus(t^*-\varepsilon,t^* + \varepsilon)}f(u'(s))ds + 2\varepsilon f\left(\frac{u(t^*+\varepsilon)- u(t^*-\varepsilon)}{2\varepsilon}\right)\\
&\leq \int_0^1 f(u'(s))ds +
2\varepsilon f\left(\frac{u(t^*+\varepsilon)- u(t^*-\varepsilon)}{2\varepsilon}\right).
\end{align*}
The decay at infinity of $f$ implies therefore that
$$
\limsup_{\varepsilon\to 0} \int_0^1f(u'_{\varepsilon}(s))ds \leq \int_0^1 f(u'(s))ds.
$$
Therefore we can assume that $u$ is continuous.
\bigskip

We define the primitive of the function $w^\ell$ defined in \eqref{L1:approxDu} as
$$
u^\ell(t):= u(0)+\int_0^tw^\ell(s)ds.
$$
It follows that $u^\ell \in W^{1,1}(0,1)$, $u^\ell(0)=u(0)$ and $(u^\ell)' = w^\ell$, which converges weakly to $Du$ in measure.

Since $(u^\ell)$ is bounded in $W^{1,1}$, then it converges weakly in $BV$ to a function $v\in BV(0,1)$. By the weak convergence of $(u^\ell)'$ in measure
it follows that $Du = Dv$ and therefore, since $v(0)=u(0)$, that $u=v$.
Hence, we have constructed a sequence $(u^\ell)\subset W^{1,1}(0,1)$ such that $u^\ell\to u$ in $BV(0,1)$, and hence in $BV_{\mathrm{loc}}(0,1)$.

\medskip

\emph{Step 3: Upper bound for the energies.} Since $(u^\ell)' = u'+ g^\ell$ and $g^\ell\geq 0$ we have by construction that
$$
\int_0^1f((u^\ell)')ds \leq \int_0^1 f(u')ds
$$
for every $\ell$, since $f$ is a decreasing function. The bound \eqref{rel:limsup} follows immediately.
\end{proof}

\section{Proof of Theorem~\ref{th:main}}\label{sec:proof}

We separate Theorem~\ref{th:main} into the five different cases, and state and prove each case separately.

\begin{thm}[Case 2, first critical regime: $\beta_n\sim 1/n$]
\label{th:first-critical}
Let $c_n := n\beta_n \to c>0$ as $n\to \infty$. For this case the functional $E_n^{(2)}$ in~\eqref{rewrite-E24-integral}, which can be rewritten as
\[
E_n^{(2)}(\mu_n) = \frac {c_n}2\iint_\doms V(c_n(x-y))\, \wmm(dxdy) + \int_\dom x\, \mu_n(dx),
\]
$\Gamma$-converges with respect to the weak convergence in measure to the functional $E^{(2)}$ defined for $\mu \in \mathcal{M}(\dom)$ as
\begin{equation}\label{E2c}
E^{(2)}(\mu):= \frac c2\iint_\doms V(c(x-y))\,\mu(dx)\mu(dy) + \int_\dom x\mu(dx).
\end{equation}
In addition, if $E_n^{(2)}(\mu_n)$ is bounded, then $\mu_n$ is weakly compact.
\end{thm}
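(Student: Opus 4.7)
My plan is the standard $\Gamma$-convergence route: compactness, then liminf, then construction of a recovery sequence. Compactness is immediate: since $V\ge0$, bounded energy $E_n^{(2)}(\mu_n)\le C$ gives $\int_\dom x\,\mu_n(dx)=\frac1n\sum_ix_i^n\le C$, and Theorem~\ref{compactness} then yields weak compactness of $\mu_n$.

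\textbf{Liminf inequality.} For $\mu_n\weakto\mu$ the linear term is handled by standard lower semicontinuity of $\mu\mapsto\int x\,\mu(dx)$ on $\dom=[0,\infty)$ against continuous non-negative test functions (approximating $x$ from below by $x\wedge K$). For the interaction term I would cut off the singularity of $V$ from above: set $V_M:=\min(V,M)$, which is continuous, bounded on $\R$, and uniformly continuous (since $V$, and hence $V_M$, decays to $0$ at infinity and equals $M$ on a neighbourhood of $0$). Since $c_n\to c$, the functions $(x,y)\mapsto V_M(c_n(x-y))$ converge uniformly to $(x,y)\mapsto V_M(c(x-y))$ on $\doms$, and by Lemma~\ref{lem:boxtimes} we have $\wmm\weakto\mu\otimes\mu$. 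A short triangle-inequality argument gives
\[
\lim_{n\to\infty}\frac{c_n}2\iint_\doms V_M(c_n(x-y))\,\wmm(dxdy)
=\frac c2\iint_\doms V_M(c(x-y))\,(\mu\otimes\mu)(dxdy).
\]
Using $V\ge V_M$ on the left and monotone convergence ($V_M\uparrow V$) on the right as $M\to\infty$ yields the desired liminf.

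\textbf{Recovery sequence.} We may assume $E^{(2)}(\mu)<\infty$, which forces $\mu$ to be atomless and to have finite first moment. I would first treat the ``nice'' case where $\mu$ is absolutely continuous with density $\rho$ bounded and bounded away from $0$ on a compact interval $[a,b]\subset\dom$. For such $\mu$ set
\[
x_i^n:=F_\mu^{-1}(i/n),\qquad i=1,\dots,n,
\]
where $F_\mu$ is the CDF of $\mu$, and let $\mu_n:=\frac1n\sum_i\delta_{x_i^n}$. Then $\mu_n\weakto\mu$ and $\frac1n\sum_ix_i^n\to\int x\,d\mu$ is a Riemann-sum convergence. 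For the interaction term, the lower bound $\rho\le M_0$ gives $|x_i^n-x_j^n|\ge|i-j|/(nM_0)$, hence
\[
V\bigl(c_n(x_i^n-x_j^n)\bigr)\le V\bigl(c_nM_0^{-1}|i-j|/n\bigr),
\]
which in turn is summable thanks to the logarithmic singularity and exponential tails of $V$ (a Riemann sum bounded by $\frac1{M_0}\int_0^\infty V$). The Riemann sum
$\frac{c_n}{2n^2}\sum_{i\ne j}V(c_n(x_i^n-x_j^n))$ is therefore dominated by an integrable function of $(s,t)\in(0,1)^2$, and it converges to $\frac c2\iint V(c(F_\mu^{-1}(s)-F_\mu^{-1}(t)))\,ds\,dt=\frac c2\iint V(c(x-y))\,\mu(dx)\mu(dy)$ by dominated convergence.

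\textbf{General $\mu$ via density.} For an arbitrary $\mu$ with $E^{(2)}(\mu)<\infty$, I would construct a sequence of ``nice'' approximants $\mu_k$ (truncate the support to $[0,R_k]$, convolve with a mollifier $\eta_{\delta_k}$, and add a small uniform layer of density $\varepsilon_k$ on $[0,R_k]$) with $\mu_k\weakto\mu$ and $E^{(2)}(\mu_k)\to E^{(2)}(\mu)$; the continuity of $\mu\mapsto\iint V(c(x-y))\,d\mu\,d\mu$ along this approximation uses the integrability of $\log$ together with the uniform $L^1$-control of the mollified densities. Combining Step~3a with a standard diagonal extraction $\tilde\mu_n:=\mu_n^{(k(n))}$ produces a recovery sequence for~$\mu$.

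\textbf{Main obstacle.} The delicate step is the density argument in the recovery sequence: one must show that measures with finite interaction energy $\iint V(c(x-y))\,d\mu\,d\mu<\infty$ can be approximated weakly by smooth densities bounded away from $0$ on a compact interval, in such a way that the logarithmically singular integral passes to the limit. Everything else is fairly routine once the cut-off $V_M$ is used for the liminf and the quantile approximation for the limsup.
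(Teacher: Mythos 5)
Your proposal is correct and follows essentially the same route as the paper's proof: compactness from $V\ge0$ plus Theorem~\ref{compactness}; the liminf from $\wmm\weakto\mu\otimes\mu$ (Lemma~\ref{lem:boxtimes}) together with truncation of $V$ from above and monotone convergence; and a recovery sequence built from quantiles of an $L^\infty$-density approximant, with the near-diagonal contribution controlled by the separation bound $|x_i^n-x_j^n|\ge |i-j|/(n\|\rho\|_\infty)$ and a comparison with $\int_0^{X}V$. (Two slips: that separation bound uses the \emph{upper} bound $\rho\le M_0$, which you call a lower bound, and the resulting near-diagonal estimate is $M_0\int_0^{X}V$, not $M_0^{-1}\int V$; also ``dominated convergence for the Riemann sum'' should really be phrased, as in the paper, as a split of $V$ into a bounded continuous part plus a near-diagonal remainder that is small uniformly in $n$.) The one place where you genuinely diverge from the paper is the density step, which you yourself flag as the delicate point: the paper approximates $\mu$ by $\rho_k(x)=k\mu([x,x+1/k))$ and verifies $E^{(2)}(\mu_k)\to E^{(2)}(\mu)$ in a single Fubini/convolution computation, whereas your truncate--mollify--lift scheme requires separately controlling the logarithmic singularity under mollification (e.g.\ a Chebyshev-type bound on $\mu\otimes\mu(\{|x-y|<\delta\})$ from finiteness of the log-energy); this can be made to work but is the one step you leave unproved, and the paper's choice of approximant avoids it. Note also that boundedness of the density away from zero is never needed --- only the upper bound enters --- so the ``$\varepsilon_k$ uniform layer'' can be dropped; and the paper reduces to $c_n\equiv1$ by rescaling the measure and reinstates general $c$ at the end, while you carry $c_n\to c$ through directly via uniform continuity of the truncated kernel, which is equally fine.
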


\begin{proof}
The compactness statement is a direct consequence of Theorem~\ref{compactness}, since the interaction potential $V$ is non-negative.
The remainder of the theorem we first prove under the assumption that $c_n=1$.

\medskip

\textbf{Liminf inequality}.
Let $\mu \in \mathcal{M}(\dom)$ and let $\mu_n$ be a sequence of measures of the form $\mu_n=\frac1n\sum_{i=1}^n\delta_{x^n_i}$ such that $\mu_n{\rightharpoonup} \mu$ weakly in measure.
Since $V\geq0$ is lower semicontinuous on $\R^2$ and $\wmm\weakto \mu\otimes\mu$ by Lemma~\ref{lem:boxtimes},
\[
\liminf_{n\to\infty} \frac12\iint_\doms  V(x-y) \,\wmm(dxdy) \geq \frac12 \iint_\doms V(x-y) \,\mu(dx)\mu(dy).
\]
For the second term we have a similar bound, and therefore
\[
\liminf_{n\to\infty} E_n^{(2)}(\mu_n) \geq E^{(2)}(\mu).
\]

\medskip

\textbf{Limsup inequality}.
It is sufficient to prove the limsup inequality only for a dense class,
\[
A := \Bigl\{\mu\in \M(\dom): \supp \mu \text{ bounded, }\mu \ll \mathcal{L}, \text{ and }
\frac{\d\mu}{\d \mathcal{L}} \in L^\infty\Bigr\}.
\]
This set is dense in $\M(\dom)$, and for any $\mu\in\M(\dom)$ with $E^{(2)}(\mu)<\infty$ an approximating sequence $(\mu_k)\subset A$ can be found such that $\mu_k\weakto\mu$ and $E^{(2)}(\mu_k)\to E^{(2)}(\mu)$. This can be seen, for instance, by defining
\[
\mu_k(dx) = \rho_k(x)\,dx \qquad\text{with }\rho_k(x) = k\mu([x,x+1/k)).
\]
Then by Fubini,
\begin{align*}
\iint_\doms V(x-y) \,\mu_k(dx)\mu_k(dy)
&= k^2\iint_\doms V(x-y) \int_x^{x+1/k}\mu(d\xi) \int_y^{y+1/k}\mu(d\eta)\,dx dy\\
&= \iint_\doms \int_{(\xi-1/k)_+}^\xi \int_{(\eta-1/k)_+}^\eta k^2 V(x-y) \,dy dx\, \mu(d\xi)\mu(d\eta).
\end{align*}
Now one recognizes in the inner two integrals the convolution of the function
\[
(x,y) \mapsto V(x-y)\chi_\dom(x)\chi_\dom(y)
\]
with the characteristic function of the square $[0,1/k)^2$, so that the expression above converges to
\[
\iint_\doms V(x-y)\,\mu(dx)\mu(dy).
\]
This shows that it is sufficient to prove the limsup inequality for all $\mu\in A$.

Take such a measure $\mu\in A$ with Lebesgue density $\rho\in L^\infty(\dom)$, and construct an approximation $\mu_n = \frac1n \sum_{i=1}^n \delta_{x^n_i}$ by defining the points $x_i$ by
\[
\int_0^{x^n_i} \rho(x)\, dx = \frac in.
\]
Then
\[
|x^n_{i+1}-x^n_i|\geq \frac1{n\|\rho\|_\infty}.
\]
Since $\supp \rho$ is bounded, all $x_i^n$ are uniformly bounded, and
\[
\lim_{n\to\infty} \int_\dom x\,\mu_n(dx) = \int_\dom x\, \mu(dx).
\]

Turning to the convolution term, for fixed $m>0$ we write
\[
\frac12\iint_\doms V\,\wmm = \frac12\iint_\doms (V\wedge m) \,\wmm + \frac12\iint_\doms (V-(V\wedge m))\,\wmm.
\]
In the first term the function $V\wedge m$ is bounded and continuous, and this term therefore converges to
\[
\frac12 \iint_\doms (V\wedge m)\, \mu\otimes\mu \leq \frac12 \iint_\doms V\, \mu\otimes\mu.
\]
If $X_m>0$ solves $V(X_m) = m$, then we estimate the second term by
\begin{align*}
\frac12\iint_\doms (V-(V\wedge m))\,\wmm &\leq \frac12 \iint_{\{|x-y|<X_m\}} V\, \wmm\\
&= \frac1{n^2} \sum_{k=1}^n \sum_{j=1}^{n-k} V(x^n_{j+k}-x^n_j) \1{|x^n_{j+k}-x^n_j|< X_m}\\
&\leq \frac1{n^2} \sum_{k=1}^{\lfloor X_m n \|\rho\|_\infty\rfloor}\sum_{j=1}^{n-k} V(x^n_{j+k}-x^n_j)\\
&\leq \frac1{n} \sum_{k=1}^{\lfloor X_m n \|\rho\|_\infty\rfloor} V\Bigl(\frac k{n\|\rho\|_\infty}\Bigr)\\
&\leq \|\rho\|_\infty \sum_{k=1}^{\lfloor X_m n \|\rho\|_\infty\rfloor} \int_{\frac{k-1}{n\|\rho\|_\infty}}^\frac{k}{n\|\rho\|_\infty}V(s)\, ds\\
&\leq \|\rho\|_\infty \int_0^{X_m} V(s)\, ds.
\end{align*}
Therefore
\[
\limsup_{n\to\infty} E_n^{(2)}(\mu_n) \leq E^{(2)}(\mu) + \|\rho\|_\infty \int_0^{X_m } V(s)\, ds.
\]
Since $m>0$ is arbitrary, and since $\lim_{m\to\infty} X_m = 0$, this proves the limsup estimate
\begin{equation}
\label{th:first-critical:limsupaim}
\limsup_{n\to\infty} E_n^{(2)}(\mu_n) \leq E^{(2)}(\mu) .
\end{equation}
\medskip

In order to allow for $c_n\not=1$, we define the scaled measure
\[
\widetilde\mu_n := \frac1n \sum_{i=1}^n \delta_{c_nx_i},
\]
with which
\[
E_n^{(2)}(\mu_n) = \frac {c_n}2\iint_\doms V(x-y)\, \widetilde\mu_n\boxtimes\widetilde\mu_n(dxdy) + \frac1{c_n}\int_\dom x\, \widetilde\mu_n(dx).
\]
The two prefactors in this expression do not change the arguments above, and upon back-transformation the result of the theorem is found.
\end{proof}

\begin{thm}[Case 1, subcritical regime: $\beta_n\ll1/n$]
\label{th:subcritical}
Let $\beta_n>0$ be a sequence such that $n\beta_n\to 0$ as $n\to \infty$. Then the functionals $E_n^{(1)}$ defined in \eqref{rewrite-E1} $\Gamma$-converge to the functional $E^{(1)}$ defined on measures $\mu\in \mathcal{M}(\dom)$ as
\begin{equation}\label{unoC}
E^{(1)}(\mu):= -\frac{1}{2\pi^2}\iint_\doms \log|x-y|\, \mu(dy)\mu(dx)  + \int_\dom x \mu(dx).
\end{equation}
In addition, if $E_n^{(1)}(\mu_n)$ is bounded, then $\mu_n$ is weakly compact.
\end{thm}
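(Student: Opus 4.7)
The plan is to closely mirror the proof of Theorem~\ref{th:first-critical} (Case 2), using the logarithmic singularity of $V$ at the origin together with the $n$-dependent shift built into $\tilde V_n$ to recover the logarithmic kernel in the limit. The key algebraic input is the decomposition $V(t) = -\pi^{-2}\log|t| + W(t)$, with $W$ continuous on $\R$, $W(0) = \pi^{-2}(1-\log 2\pi)$, and $W(t)\to+\infty$ as $|t|\to\infty$ (since $V\geq 0$ has exponential tails); in particular $W$ is bounded below globally by some $W_{\min}$. Substituting this into the definition of $\tilde V_n$ gives, for every $s\neq 0$,
\begin{equation}\label{sub:decomp}
\tilde V_n(n^2\beta_n^2 s) = -\frac{1}{\pi^2}\log|s| + R_n(s), \quad R_n(s) := W(n^2\beta_n^2 s) + \frac{1}{\pi^2}(\log 2\pi - 1),
\end{equation}
with $R_n\geq W_{\min}+\pi^{-2}(\log 2\pi-1) =: C_0$ globally and $R_n\to 0$ uniformly on every compact subset of $\R$ (because $n^2\beta_n^2\to 0$ in this regime).

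For compactness, I would combine~\eqref{sub:decomp} with the elementary bound $-\log|x-y|\geq -|x-y|\geq -|x|-|y|$ to get
\[
E_n^{(1)}(\mu_n) \geq \Bigl(1 - \tfrac{1}{\pi^2}(1-\tfrac{1}{n})\Bigr)\int_\dom x\,\mu_n(dx) + \tfrac{C_0(n-1)}{2n}.
\]
Since $1-1/\pi^2>0$, boundedness of $E_n^{(1)}(\mu_n)$ implies boundedness of $\int x\,\mu_n(dx)$, and weak compactness then follows from Theorem~\ref{compactness}. For the \textbf{liminf} inequality with $\mu_n\weakto\mu$, I would split
\[
E_n^{(1)}(\mu_n) = -\tfrac{1}{2\pi^2}\iint_\doms\log|x-y|\,\wmm + \tfrac{1}{2}\iint_\doms R_n(x-y)\,\wmm + \int_\dom x\,\mu_n(dx).
\]
The linear term passes to the limit by the moment bound and uniform integrability. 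The $R_n$ term tends to zero: on a compact $K_\epsilon\subset\doms$ capturing almost all of the mass (tightness is uniform), $R_n\to 0$ uniformly, while on the complement the bound $R_n\geq C_0$ together with $\wmm(\doms\setminus K_\epsilon)<\epsilon$ controls the tail. For the logarithmic term, lower semicontinuity of $-\log|\cdot|$ combined with Lemma~\ref{lem:boxtimes} ($\wmm\weakto\mu\otimes\mu$) and the standard truncation $(-\log|\cdot|)\wedge M$ yields the required bound $\liminf\geq -\pi^{-2}\iint\log|x-y|\,\mu\otimes\mu$.

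For the \textbf{limsup}, as in Theorem~\ref{th:first-critical} it suffices to build a recovery sequence on the dense class of compactly supported measures with bounded Lebesgue density (the mollification by convolution used there applies verbatim, since $\log$ is locally integrable). Given $\mu=\rho\,dx$ with $\rho\in L^\infty$ and compact support, I would define $x_i^n$ by $\int_0^{x_i^n}\rho = i/n$, so that $x^n_{i+1}-x^n_i\geq 1/(n\|\rho\|_\infty)$. Splitting the interaction into $|x-y|<\delta$ and $|x-y|\geq\delta$, on the bulk region $-\log|x-y|$ is bounded and $R_n\to 0$ uniformly, so this part matches the continuum value; on the diagonal block, the explicit gap lets one compare the discrete sum to $\|\rho\|_\infty\int_0^\delta(-\pi^{-2}\log s)\,ds = O(\delta|\log\delta|)$, which vanishes as $\delta\to 0$.

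The main obstacle, absent in Case 2, is that $\inf\tilde V_n\to-\infty$, so the non-negativity of the interaction used for compactness in the proof of Theorem~\ref{th:first-critical} is no longer available. The decomposition~\eqref{sub:decomp}, with its $n$-uniform lower bound on $R_n$, is the crucial structural input that rescues both the compactness argument and the tail control in the liminf.
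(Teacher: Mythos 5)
Your overall strategy coincides with the paper's: lower-bound $\tilde V_n(n^2\beta_n^2\,\cdot)$ by $-\pi^{-2}\log|\cdot|$ to obtain compactness and the liminf, and upper-bound it by $-\pi^{-2}\log|\cdot|+o(1)$ on bounded sets to run the Case-2 recovery construction. Where you differ is in how these bounds are produced: the paper proves the sharp two-sided estimates $-\pi^{-2}\log|t|\le\tilde V_n(n^2\beta_n^2t)\le\pi^{-2}(2\pi n^2\beta_n^2-\log|t|)$ by explicit monotonicity computations on $V-\hat V$, whereas you use the soft decomposition $V=-\pi^{-2}\log|\cdot|+W$ with $W$ continuous, bounded below and divergent at infinity. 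This is a legitimate and somewhat more robust variant (it uses only the qualitative hypotheses on $V$ listed in the Comments section), at the price of a non-explicit constant $C_0$; your compactness estimate, trading $-\log|x-y|\ge-(x+y)$ against the forcing term, is a correct alternative to the paper's grouping of $-\tfrac1{\pi^2}\log|t|$ with $\tfrac12|t|$.

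The one step you should repair is the liminf. The paper deliberately keeps the kernel and the forcing together, so that the single integrand $-\tfrac1{2\pi^2}\log|x-y|+\tfrac12(x+y)$ is lower semicontinuous \emph{and bounded below} on $\doms$, and the liminf then follows at once from $\wmm\weakto\mu\otimes\mu$. You separate the two terms, and then $-\log|x-y|$ is unbounded \emph{below} (as $|x-y|\to\infty$), so neither lower semicontinuity nor the truncation $(-\log|\cdot|)\wedge M$ --- which only cures the singularity on the diagonal --- justifies passing to the limit. What saves your route is that $\log_+|x-y|$ \emph{is} uniformly integrable with respect to $\wmm$ thanks to the uniform first-moment bound (since $\log_+t\le(\log M/M)\,t$ for $t\ge M\ge e$), but you attribute the uniform integrability to the linear term $\int_\dom x\,\mu_n(dx)$, where it neither holds (a first-moment bound does not make $x\mapsto x$ uniformly integrable, and this term need not converge) nor is needed (non-negativity and lower semicontinuity give $\liminf_n\int_\dom x\,\mu_n(dx)\ge\int_\dom x\,\mu(dx)$, which suffices). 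Similarly, your claim that the $R_n$ term ``tends to zero'' is more than you can prove, since $W(n^2\beta_n^2(x-y))$ can be large when $|x-y|\gg(n\beta_n)^{-2}$; but only the one-sided bound $R_n\ge C_0$ off a compact set is needed, and that part of your argument is correct. With these two points restated as one-sided bounds your proof closes; alternatively, adopt the paper's grouping and the issue disappears. The compactness and limsup parts are sound as written.
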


\begin{proof}
\textbf{Compactness for the measures $\mu_n$.}
This is the only one of the five cases in which the compactness is non-trivial, since $\tilde V_n$ takes both signs; therefore a bound on $E_n^{(1)}$ does not translate directly into a bound on the second term $\frac1n \sum x_i$. However, by combining the first two terms, such a bound can be obtained, as we now show.

First we show that
\begin{equation}
\label{ineq:bound-V-from-below}
V(t) \geq \hat V(t) := \frac{1-\log2\pi |t|}{\pi^2}\qquad\text{for all $t\not=0$}.
\end{equation}
This follows by remarking that for $t>0$
\begin{equation}
\label{ineq:bound-V-from-below-part1}
V'(t) - \hat V'(t) = -\frac t{\sinh^2\pi t} + \frac1{\pi^2 t}\geq 0,
\end{equation}
and using the expression
\[
V(t) = \frac {2t}{\pi(e^{2\pi t}-1)} - \frac1{\pi^2} \log(1-e^{-2\pi t})
\]
we compute that for $t>0$
\begin{equation}
\label{ineq:bound-V-from-below-part2}
\lim_{t\downarrow 0} V(t) - \hat V(t)
= \lim_{t\downarrow 0}
  \Bigl[\frac {2t}{\pi(e^{2\pi t}-1)} + \frac1{\pi^2}\Bigl\{- \log(1-e^{-2\pi t})
  - 1 + \log 2\pi t\Bigr\}\Big] = 0.
\end{equation}
From~\eqref{ineq:bound-V-from-below-part1} and~\eqref{ineq:bound-V-from-below-part2} we deduce~\eqref{ineq:bound-V-from-below}.

Therefore  the renormalised interaction energy $\tilde{V}_n$ satisfies
\begin{align}
\tilde{V}_n(n^2\beta_n^2 t) &= V(n^2\beta_n^2 t) + \frac{\log(2\pi n^2\beta_n^2) - 1}{\pi^2} \notag\\
&\geq \frac{1-\log(2\pi n^2\beta_n^2 |t|)}{\pi^2} + \frac{\log(2\pi n^2\beta_n^2) - 1}{\pi^2}\notag\\
& = -\frac{1}{\pi^2}\log |t|.
\label{ineq:tildeVn}
\end{align}

Note that for all $t\not=0$
\[
\tilde V_n(n^2\beta_n^2 t) + \frac12 |t| \geq \frac12|t| - \frac1{\pi^2} \log |t| \geq
\frac1{\pi^2}\Bigl(1-\log\frac2{\pi^2}\Bigr)\geq0.
\]
Let $\mu_n$ be a sequence of measures of the form $\mu_n=\frac1n\sum_{i=1}^n\delta_{x_i^n}$ such that $E_n^{(1)}(\mu_n)$ is bounded. We now estimate
\begin{align*}
E_n^{(1)}(\mu_n) &=
\frac1{n^2} \left\{ \frac12\sum_{i=1}^n\sum_{\substack{j=1\\j\not=i}} V_n(n^2\beta_n^2(x_i-x_j)) + \frac14 \sum_{i=1}^n\sum_{j=1}^n (x_i+x_j) \right\}
+ \frac1{2n} \sum_{i=1}^n x_i\\
&\geq \frac1{2n^2} \sum_{i=1}^n\sum_{\substack{j=1\\j\not=i}}\Bigl[ V_n(n^2\beta_n^2(x_i-x_j)) + \frac12 |x_i-x_j| \Bigr]
+ \frac1{2n} \sum_{i=1}^n x_i\\
&\geq \frac1{2n} \sum_{i=1}^n x_i.
\end{align*}
The boundedness of $E_n^{(1)}(\mu_n)$ and Theorem~\ref{compactness} then provide compactness of the sequence $\mu_n$.

\medskip

\textbf{Liminf Inequality.}
Let now $\mu_n$ be a sequence of measures of the form $\mu_n=\frac1n\sum_{i=1}^n\delta_{x_i^n}$ that converges weakly to $\mu$, and note that by Lemma~\ref{lem:boxtimes}, $\wmm\weakto \mu\otimes\mu$.
By \eqref{ineq:tildeVn} we have the bound
\begin{align}
E_n^{(1)}(\mu_n) &\geq -\frac{1}{2\pi^2}\iint_\doms\log|x-y|\, \wmm(dxdy) + \int_\dom x \,\mu_n(dx)\notag\\
&= \iint_\doms\Bigl[-\frac{1}{2\pi^2}\log|x-y| + \frac12 (x+y)\Bigr]\, \wmm(dxdy) + \frac1n\int_\dom x \,\mu_n(dx).
\label{ineq:th:k=1:liminf}
\end{align}
The function between brackets is lower semicontinuous, and by a similar argument as we used for the compactness above it is also bounded from below. Therefore the right-hand side in~\eqref{ineq:th:k=1:liminf} is lower semicontinuous with respect to weak measure convergence, and therefore
\begin{align*}
\liminf_{n\to\infty} E_n^{(1)}(\mu_n) &\geq -\frac{1}{2\pi^2}\iint_\doms \log|x-y|\, \mu(dy)\mu(dx)  + \int_\dom x\, \mu(dx)\\
&= E^{(1)}(\mu).
\end{align*}

\medskip

\textbf{Limsup inequality}.
For the construction of a recovery sequence we first prove a second inequality on $V$ for $t>0$:
\begin{align*}
V(t) &= \frac {2t}{\pi(e^{2\pi t}-1)} - \frac1{\pi^2} \log(1-e^{-2\pi t})\\
&\leq \frac1{\pi^2} - \frac1{\pi^2} \Bigl[-2\pi t + \log(e^{2\pi t}-1)\Bigr]\\
&\leq \frac1{\pi^2} \Bigl[ 1+2\pi t - \log 2\pi t\Bigr],
\end{align*}
from which follows the estimate for all $t\not =0$,
\begin{equation}
\label{est:tildeV}
\tilde{V}_n(n^2\beta_n^2 t)
=  V(n^2\beta_n^2 t) + \frac{\log(2\pi n^2\beta_n^2) - 1}{\pi^2}
\leq \frac1{\pi^2}(2\pi n^2\beta_n^2 - \log |t|).
\end{equation}

The remainder of the argument follows largely the proof of Theorem~\ref{th:first-critical}. Given a similar limit measure $\mu$ and approximating sequence $\mu_n$, we estimate
\begin{align*}
E_n^{(1)}(\mu_n) &= \frac12 \iint_\doms V(n^2\beta_n^2(x-y))\,\wmm(dxdy) + \int_\dom x\, \mu_n(dx)\\
&\leq - \frac1{2\pi^2} \iint_\doms \log|x-y|\,\wmm(dxdy) + \int_\dom x\, \mu_n(dx)
+ \frac{n^2\beta_n^2}\pi.
\end{align*}
Decomposing $-\log|x-y|$ into a part that is bounded and a remainder, as in the proof of Theorem~\ref{th:first-critical}, and repeating the corresponding estimate, one can show that the right-hand side converges to $E^{(1)}(\mu)$. This proves
\[
\limsup_{n\to\infty} E_n^{(1)}(\mu_n) \leq E^{(1)}(\mu).
\]
\end{proof}

\begin{thm}[Case 3, intermediate regime: $1/n\ll\beta_n\ll1$]\label{GE3}
Let $\beta_n>0$ be a sequence such that $\beta_n\to 0$ and $n\beta_n \to \infty$, as $n\to \infty$. Then the functionals $E_n^{(3)}$ defined in \eqref{rewrite-E24-integral} $\Gamma$-converge with respect to weak measure convergence to the functional $E^{(3)}$ defined on measures $\mu\in \mathcal{M}(\Omega)$ as
\begin{equation}
E^{(3)}(\mu) := \begin{cases}
\displaystyle \frac 12 \left(\int_\R V\right) \int_\dom\rho^2(x)\,dx
+ \int_\dom x\rho(x)\, dx & \text{if } \mu = \rho dx,\\
+\infty &\text{otherwise,}
\end{cases}
\end{equation}
which is the same as
\begin{equation}\label{E3c}
E^{(3)}(\x):=\frac12 \left(\int_\R V\right) \int_0^1 \frac{1}{\x'(s)}ds + \int_0^1 \x(s)\,ds,
\end{equation}
when written in terms of $\x\in BV_{loc}(0,1)$, $\x$ increasing, and $\mu$ and $\xi$ are linked by~\eqref{link:mu-xi}.
In addition, if $E_n^{(3)}(\mu_n)$ is bounded, then $\mu_n$ is weakly compact.
\end{thm}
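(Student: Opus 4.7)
\textbf{Compactness} is immediate: the nonnegativity of $V$ gives $\int_\Omega x\,\mu_n(dx) \leq E_n^{(3)}(\mu_n) \leq C$, which is the first-moment hypothesis \eqref{D:L1} of Theorem~\ref{compactness}, so $(\mu_n)$ is weakly compact (and the associated interpolants $\xi^n$ are $BV_{\mathrm{loc}}$-compact).

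\textbf{Liminf inequality.} Let $\mu_n\weakto\mu$ with bounded energies, and let $\xi^n\to\xi$ in $BV_{\mathrm{loc}}(0,1)$ with $\xi$ increasing. The linear term passes to the limit because $\int_\Omega x\,\mu_n(dx)=\int_0^1\xi^n(s)\,ds\to\int_0^1\xi(s)\,ds=\int_\Omega x\,\mu(dx)$. For the interaction, the key idea is the change of index $\tau=k\beta_n$: writing $A_n(s,u):=u^{-1}\int_s^{s+u}(\xi^n)'$ for the averaged discrete derivative, one has $n\beta_n(x_{j+k}^n-x_j^n)=\tau\,A_n(s,\tau/(n\beta_n))$ at $s=j/n$, and the Riemann partition in $k$ has mesh $\beta_n\to 0$. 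Since $n\beta_n\to\infty$, the averaging scale $\tau/(n\beta_n)$ vanishes for every fixed $\tau$; for a smooth limit $\xi\in W^{1,1}$, Lebesgue differentiation gives $A_n\to\xi'$ pointwise, and because $V\geq 0$, Fatou's lemma yields
\[
\liminf_n E_n^{(3)}(\mu_n) \;\geq\; \int_0^1\!\!\int_0^\infty V(\tau\,\xi'(s))\,d\tau\,ds + \int_0^1\xi(s)\,ds \;=\; \frac{\int_\R V}{2}\int_0^1\frac{ds}{\xi'(s)} + \int_0^1\xi(s)\,ds.
\]
To extend this $W^{1,1}$ bound to a generic $BV_{\mathrm{loc}}$ limit $\xi$ --- in particular to obtain $E^{(3)}(\mu)=+\infty$ on any $\mu$ with a nontrivial singular part --- I invoke Theorem~\ref{relax} with $f(t)=(\int_\R V)/(2t)$, which is convex, decreasing and vanishes at infinity; the right-hand side of \eqref{E3c} is then exactly the lower-semicontinuous envelope of the $W^{1,1}$ functional.

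\textbf{Limsup inequality.} By continuity of $E^{(3)}$ and a density argument one may restrict to $\mu=\rho\,dx$ with $\rho\in C_c^0(\Omega)$ bounded away from $0$ on its support. The natural recovery sequence comes from the quantile construction $\int_0^{x_i^n}\rho=i/n$, which yields $\mu_n\weakto\mu$ and $x_{i+1}^n-x_i^n\approx 1/(n\rho(x_i^n))$. The linear term converges by a standard Riemann sum. For the interaction, writing $n\beta_n(x_{j+k}^n-x_j^n)\approx k\beta_n/\rho(x_j^n)$ and setting $\tau=k\beta_n$, the inner sum is a Riemann approximation of $\int_0^\infty V(\tau/\rho(x_j^n))\,d\tau=\rho(x_j^n)\int_0^\infty V$; averaging over $j$ produces the expected limit $(\int_\R V/2)\int\rho^2\,dx$. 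Uniform positivity of $\rho$ on its support and the exponential decay of $V$ at infinity dominate both the Riemann-sum errors and the boundary terms near $j=n$.

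\textbf{Main obstacle.} The most delicate step is the liminf: one has to justify the Riemann-sum/Fatou argument when $\xi^n$ converges only in $BV_{\mathrm{loc}}$ (no pointwise control on $(\xi^n)'$), and must in particular force the value $+\infty$ on singular limits. Theorem~\ref{relax} is the essential bridge between $W^{1,1}$ limits and genuine $BV_{\mathrm{loc}}$ limits, as it identifies the relaxation of $\int f(\xi')$ with its natural extension built from the absolutely continuous part of $D\xi$ alone.
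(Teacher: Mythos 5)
Your compactness argument and your limsup construction are essentially the paper's (quantile/interpolation points $x_i^n=\x(i/n)$, Jensen's inequality in the averaged increments, and the Riemann sum $\delta_n\sum_k V(k\delta_n)\to\int_0^\infty V$; the reduction to nice densities is what Theorem~\ref{relax} and the perturbation $\x_\e(t)=\x(t)+\e t$ are for). The liminf inequality, however, has a genuine gap, and it is precisely the step you flag as the ``main obstacle''. Two things go wrong. First, $A_n(s,\tau/(n\beta_n))$ is an average of $(\x^n)'$, not of $\x'$, over a window that shrinks as $n\to\infty$; weak-\textasteriskcentered\ convergence in $BV_{\mathrm{loc}}$ gives no pointwise control whatsoever on such averages, and the Lebesgue differentiation theorem applies to a fixed $L^1$ function, not to a sequence of derivatives that may oscillate or concentrate at scale $1/n$. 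So the pointwise convergence $A_n\to\x'$ needed for Fatou is unavailable, and Jensen's inequality for the convex $V$ runs in the wrong direction for a lower bound (it gives $V(\tau A_n)\le$ an average of $V(\tau(\x^n)')$, which is useful only for the limsup). Second, Theorem~\ref{relax} cannot serve as the bridge you want: it computes the lower semicontinuous envelope of the \emph{continuum} functional $F$ on $W^{1,1}$ and is used in the paper only to shrink the class of targets for the \emph{recovery sequence}; it says nothing about $\liminf_n E_n^{(3)}(\mu_n)$ for the discrete energies, and in particular it cannot produce the value $+\infty$ on measures with a singular part.

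The paper closes exactly this gap by a different mechanism: it factorizes the kernel as $V=W\ast W$ with $W=\check U$, $U=\sqrt{\widehat V}$ (using $\widehat V\ge0$, proved in the Appendix), and shows that boundedness of the interaction term forces $W^m_n\ast\mu_n$ to be bounded in $L^2(\R)$, where $W^m_n(t)=n\beta_n W^m(n\beta_n t)$ and $W^m=\min\{W,m\}$. Identifying the weak $L^2$ limit as $\bigl(\int_\R W^m\bigr)\mu$ proves that $\mu$ must be absolutely continuous with density $\rho\in L^2$, and weak lower semicontinuity of the $L^2$ norm, followed by $m\to\infty$ and the identity $\bigl(\int_\R W\bigr)^2=\int_\R V$, gives the sharp constant. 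This single argument simultaneously rules out singular limits and produces the lower bound; your proposal has no substitute for it. (A minor additional point: in the liminf the linear term $\int_\dom x\,\mu_n(dx)$ does not simply ``pass to the limit'' since $x\mapsto x$ is unbounded; one only gets lower semicontinuity, via truncation to $[0,M]$ and taking the supremum over $M$, which is all that is needed.)
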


\begin{proof}
Again the compactness statement follows from Theorem~\ref{compactness}.

For the \textbf{liminf inequality} we will make use of the expression \eqref{rewrite-E24-integral} for the energy, i.e.,
\begin{equation}\label{E3m}
E^{(3)}_n(\mu_n)= \frac12\,n\beta_n\iint_\doms V(n\beta_n(x-y))\,\wmm(dxdy) + \int_\dom x \,\mu_n(dx).
\end{equation}
We will prove that for any sequence $\mu_n\weakto \mu$,
\begin{equation}
\label{ineq:th:k=3:liminf}
\liminf_{n\to \infty} E^{(3)}_n(\mu_n) \geq   E^{(3)}(\mu).
\end{equation}
\medskip

Take a sequence $\mu_n\weakto \mu$ such that $E^{(3)}(\mu_n)$ remains bounded. Since the second term
of $E^{(3)}_n(\mu_n)$ is bounded, Theorem~\ref{compactness} guarantees that there exists a measure $\mu$ such that $\mu_n \rightharpoonup \mu$ in measure, at least along a subsequence, and we switch to that subsequence without changing notation. The support of the limit measure $\mu$ lies in $\Omega = [0,\infty)$ by the definition of the extended measures~$\mu_n$.
We split the rest of the proof into three steps.

\medskip

\emph{Step 1: Rewriting the energy in terms of convolutions.}
Let $V_n(t):=n\beta_nV(n\beta_n t)$; we claim that $V_n$ converges to $(\int_{\mathbb{R}} V)\delta_0$ in distributions. Indeed, let
$\psi \in C_0^\infty(\mathbb{R})$; then
\begin{equation*}
\lim_{n\to\infty}\int_{\mathbb{R}}V_n(s)\psi(s)ds = \lim_{n\to\infty}\int_{\mathbb{R}} V(t)\psi\left(\frac t{n\beta_n}\right)dt = \left(\int_{\mathbb{R}}V(t)dt\right) \psi(0),
\end{equation*}
which proves the claim.

\medskip

Now we use the fact that $V = W\ast W$, where $W = \check{U}$ and $U = \sqrt{\hat{V}}$, as proved in the Appendix (Subsection \ref{convolution}). In addition to $W$ we will also use its truncation $W^m := \min\{W,m\}$ for any fixed $m>0$.

Let $W_n$ be defined as $W_n:= \check{U}_n$, with $U_n = \sqrt{\hat{V_n}}$. By the scaling properties of the Fourier transform it follows that  $W_n(t) = n\beta_n W(n\beta_n t)$. Similarly we define $W^m_n(t) := n\beta_n W^m(n\beta_n t)$.
We note that, like in the case of $V_n$, the distributional limits of $W_n$ and $W^m_n$ are $\left(\int_{\mathbb{R}}W\right) \delta_0$ and $(\int_\R W^m)\,\delta_0$. Moreover,
\begin{equation}\label{intVW}
\int_{\mathbb{R}}W(t)\,dt = \hat{W}(0) = \sqrt{\hat{V}}(0) = \sqrt{\int_{\mathbb{R}}V(t)\,dt}.
\end{equation}

\medskip

Note that since $V=W\ast W$,
\[
V_n(x-y) = \int_\R W_n(z-x)W_n(z-y)\, dz,
\]
and therefore
\begin{align*}
\frac12\iint_{\doms} V_n(x-y)\wmm(dxdy) &= \frac1{n^2} \sum_{k=1}^n\sum_{j=1}^{n-k} V_n(x_{j+k}-x_j)\\
&= \frac1{n^2} \sum_{k=1}^n\sum_{j=1}^{n-k} \int_\R W_n(z-x_{j+k})W_n(z-x_j)\, dz.
\end{align*}
We then estimate
\begin{align}
\frac12\int_\R (W^m_n\ast \mu_n)^2
&= \frac1{n^2} \sum_{k=1}^n\sum_{j=1}^{n-k} \int_\R W^m_n(z-x_{j+k})W^m_n(z-x_j)\, dz + \frac1{2n^2} \sum_{j=1}^n \int W^m_n(z-x_j)^2\, dz\notag\\
&\leq \frac1{n^2} \sum_{k=1}^n\sum_{j=1}^{n-k} \int_\R W_n(z-x_{j+k})W_n(z-x_j)\, dz
+ \frac1{2n} \|W^m_n\|^2_2\notag\\
&= \frac12\iint_{\doms} V_n(x-y)\wmm(dxdy) + \frac{\beta_n}2\|W^m\|^2_2
\label{th:intermediate:est:liminf:wmn}\\
&\leq C.
\notag
\end{align}

Therefore we obtain weak convergence in $L^2(\R)$ along a subsequence of $W^m_n\ast \mu_n$ to some $f\in L^2(\R)$.

\medskip

\emph{Step 2: Identification of $f$.}
In order to find the relation between $f$ and $\mu$ we compute the distributional limit of the sequence $W^m_n\ast \mu_n$. Let $\psi\in C^\infty_0(\mathbb{R})$ be a test function; then we have
\begin{align}\label{identifylimit}
\lim_{n\to \infty}\int_{\mathbb{R}} (W^m_n\ast \mu_n)(x)\psi(x)\,dx = \lim_{n\to \infty}\int_{\mathbb{R}}(W^m_n\ast \psi)(x)\mu_n(dx). 
\end{align}
Note that $W^m_n\ast \psi \stackrel{n\to\infty}\longrightarrow \left(\int_{\mathbb{R}}W^m\right)\psi$ uniformly, since $W^m_n\ast \psi$ converges to $(\int W^m)\,\psi$ strongly in $H^1(\R)$. The uniform convergence of $W^m_n\ast\psi$, together with the weak convergence of $\mu_n$ to $\mu$, guarantee that the limit in \eqref{identifylimit} is:
\begin{equation}
\lim_{n\to \infty}\int_{\mathbb{R}} (W^m_n\ast \mu_n)(x)\psi(x)\,dx = \lim_{n\to \infty}\int_{\mathbb{R}}(W^m_n\ast \psi)(x)\mu_n(dx) = \left(\int_{\mathbb{R}}W^m\right) \int_{\mathbb{R}}\psi(x)\mu(dx).
\end{equation}
Therefore, by the uniqueness of the limit of $(W^m_n\ast \mu_n)$ we deduce that $f = \left(\int_{\mathbb{R}}W^m\right) \mu$.
Hence, $\mu$ is absolutely continuous with respect to the Lebesgue measure with a density in $L^2(\mathbb{R})$, i.e., there
exists $\rho \in L^2(\mathbb{R})$, $\rho \geq 0$ a.e., such that $\mu = \rho\, dx$.

\medskip

\emph{Step 3: Lower bound.} From~\eqref{th:intermediate:est:liminf:wmn} it follows that
\[
\liminf_{n\to \infty} \frac 12 \iint_\doms V_n(x-y)\wmm(dxdy)
\geq \frac 12 \left(\int_{\mathbb{R}}W^m\right)^2\int_\dom\rho^2(x)\,dx.
\]
Taking on both sides the supremum over $m>0$ we find
\begin{align}
\liminf_{n\to \infty} \frac 12 \iint_\doms V_n(x-y)\wmm(dxdy)&
\geq \frac 12 \left(\int_{\mathbb{R}}W\right)^2\int_\dom\rho^2(x)\,dx \nonumber\\
&= \frac 12 \left(\int_{\mathbb{R}} V\right) \int_\dom\rho^2(x),dx,
\end{align}
where in the last equality we used the relation \eqref{intVW}. For the second term of the energy we have that, for every $M>0$,
\begin{equation}\label{3:1}
\liminf_{n\to \infty} \int_{\mathbb{R}} x \,\mu_n(dx) \geq \liminf_{n\to \infty}\int_{[0,M]} x \,\mu_n(dx) = \int_0^M x\rho(x)\, dx,
\end{equation}
so that, taking the supremum on all $M>0$
\begin{equation}\label{3:2}
\liminf_{n\to \infty}\int_{\mathbb{R}} x \,\mu_n(dx) \geq \int_\dom x\rho(x) \,dx.
\end{equation}

In conclusion, from \eqref{3:1} and \eqref{3:2} follows the inequality
\begin{align*}
\liminf_{n\to \infty} E^{(3)}_n(\mu_n) \geq  \frac 12 \left(\int_\R V\right) \int_\dom\rho^2(x)\,dx +
\int_\dom x \rho(x)\,dx = E^{(3)}(\mu),
\end{align*}
which is~\eqref{ineq:th:k=3:liminf}.

\medskip
We now continue with the proof of the \textbf{limsup inequality}: for each $\xi\in BV_{\mathrm{loc}}(0,1)$, there exists a sequence $(x^n)_n$ of $n$-vectors $(x^n_1,\dots,x^n_n)$ such that
\begin{equation}
\label{ineq:th:k=3:limsup}
\limsup_{n\to\infty}E^{(3)}_n(x^n_1,\dots,x^n_n) \leq  E^{(3)}(\xi) = \frac12\left(\int_\R V\right) \int_0^1 \frac{1}{\x'(s)}\,ds + \int_0^1 \x(s)\,ds.
\end{equation}
By  Theorem \ref{relax} we can assume without loss of generality that $\x\in W^{1,1}(0,1)$.

\medskip

So, let $\x\in W^{1,1}(0,1)$ be an increasing function such that $E^{(3)}(\x)<\infty$. We can assume that there exists $\varepsilon>0$ such that $\x'\geq \varepsilon$ uniformly on $(0,1)$. Indeed, we can otherwise approximate $\x$ by the sequence $\x_\varepsilon(t):= \x(t) + \varepsilon t$. Clearly $\x_\varepsilon \to \x$ in $W^{1,1}$ as $\varepsilon\to 0$; moreover for the absolutely continuous part of the distributional gradient of $\x_\varepsilon$ we have that $\x'_\varepsilon = \x' + \varepsilon$. Hence $\x'_{\varepsilon}\geq \varepsilon$, since $\x$ is increasing. Also, since $\x'_\varepsilon \geq \x'$ and $f(t)=1/t$ is decreasing we have
\[
\int_0^1 \frac{1}{\x'_\varepsilon(t)}\,dt \leq \int_0^1 \frac{1}{\x'(t)}\,dt,
\]
so that
\[
\limsup_{\varepsilon\to 0} E^{(3)}(\x_\varepsilon) \leq E^{(3)}(\x).
\]
Therefore, from now on we can assume that $\x'\geq \varepsilon$ for some $\varepsilon >0$.

\medskip

For every $n\in \mathbb{N}$ we define the piecewise affine function $\x^n$ and the points $x^n_i$ by $\x^n\left(\frac in\right):= x^n_i := \x\left(\frac in\right)$. Clearly the sequence $\x^n$ converges to $\x$ strongly in $W^{1,1}$. We consider the energy for this sequence,
\begin{align*}
E^{(3)}_n(x^n_1,\dots, x^n_n) &=\frac{\beta_n}{n}\sum_{k=1}^n\sum_{i=0}^{n-k} V\left(\beta_n n(x^n_{i+k}-x^n_i)\right) + \frac{1}{n} \sum_{i=0}^n x^n_i.
\end{align*}
As the second term of the functional is the Riemann sum of the integral of $\x^n$ in $(0,1)$, we focus on the first term.
By the convexity of the energy density $V$ we have that, using Jensen's inequality,
\begin{align*}
\frac{1}{n}\, V\left(\beta_n n(x^n_{i+k}- x^n_i)\right)
= \frac{1}{n}\,V\left(\beta_n k \frac{n}{k}\int_{\frac in}^{\frac{i+k}{n}}(\x^n)'(s)\,ds\right)
\leq \frac{1}{k}\int_{\frac in}^{\frac{i+k}{n}} V\left(\beta_n k (\x^n)'(s)\right)\, ds
\end{align*}
for every $k=1,\dots,n$ and every $i=0,\dots,n-k$. Since
\[
\sum_{i=0}^{n-k}\frac{1}{k}\int_{\frac in}^{\frac{i+k}{n}}V\left(\beta_n k (\x^n)'(s)\right) ds \leq \int_0^1V\left(\beta_n k (\x^n)'(s)\right)\,ds\quad \mbox{for every }k=1,\dots,n,
\]
we have the following estimate for the first term of the energy:

\begin{align}\label{subbfa}
\frac{\beta_n}{n}\sum_{k=1}^n\sum_{i=0}^{n-k} V\left(\beta_n n(x^n_{i+k}-x^n_i)\right) &\leq \beta_n\sum_{k=1}^n \int_0^1 V\left(\beta_n k (\x^n)'(s)\right) ds\nonumber\\
&= \int_0^1 \frac{1}{(\x^n)'(s)}\beta_n (\x^n)'(s)\sum_{k=1}^n V\left(k(\beta_n (\x^n)'(s))\right) ds.
\end{align}

We define $\delta_n(s):= \beta_n(\x^n)'(s)$. Since by assumption $(\x^n)'\geq \varepsilon$ a.e.\ in $(0,1)$, it follows that $n\delta_n(s)\to \infty$ for a.e.\ $s \in (0,1)$. Moreover, since $\x'$ is finite for a.e.\ $s$, $\delta_n(s)\to 0$. It follows that for a.e.\ $s$, the expression
\[
\delta_n(s)\sum_{k=1}^n V\left(k\delta_n(s)\right)
\]
is a Riemann sum for the integral $\int_0^{\infty}V(t)dt = (1/2) \int_\R V$. Therefore letting $n\to \infty$ and by virtue of \eqref{subbfa}, we have the following bound for the energies $E^{(3)}_n$:
\[
\limsup_{n\to\infty}E^{(3)}_n(x^n_1,\dots,x^n_n) \leq \frac12\left(\int_\R V\right) \int_0^1 \frac{1}{\x'(s)}\,ds + \int_0^1 \x(s)\,ds = E^{(3)}(\xi).
\]
This proves~\eqref{ineq:th:k=3:limsup}.
\end{proof}

\begin{thm}[Case 4, second critical regime: $\beta_n\sim 1$]\label{SCR}
Let $\beta_n>0$ be a sequence such that $\beta_n \to c>0$ as $n\to \infty$. Then, as $n\to \infty$, the functionals $E_n^{(4)}$ defined in \eqref{rewrite-E24-integral} $\Gamma$-converge to the functional $E^{(4)}$ defined in terms of measures $\mu\in \mathcal{M}(\dom)$ by
\begin{equation}
\label{def:E4mu}
E^{(4)}(\mu) := \begin{cases}
\displaystyle c\int_\dom V_{\mathrm{eff}}\Bigl(\frac c{\rho(x)}\Bigr)\rho(x)\, dx
+ \int_\dom x\rho(x)\, dx & \text{if } \mu = \rho dx,\\
+\infty &\text{otherwise,}
\end{cases}
\end{equation}
or in terms of increasing functions $\x\in BV_{loc}(0,1)$ as
\[
E^{(4)}(\x)=c\int_{0}^1V_{\mathrm{eff}}(c\x'(s))\,ds + \int_0^1\x(s)\,ds,
\]
where $V_{\mathrm{eff}}(t):= \sum_{k=1}^{\infty}V(kt)$ for every $t\in \mathbb{R}$,
and $\mu$ and $\xi$ are linked by~\eqref{link:mu-xi}.
In addition, if $E_n^{(4)}(\mu_n)$ is bounded, then $\mu_n$ is weakly compact.
\end{thm}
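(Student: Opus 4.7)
The plan is to adapt the proof of Theorem~\ref{GE3}: use Jensen's inequality for the limsup and a truncation plus lower-semicontinuity argument for the liminf. Compactness follows from Theorem~\ref{compactness}: since $V\ge 0$ and $\beta_n$ is bounded, a bound on $E_n^{(4)}(\mu_n)$ bounds $\int_\Omega x\,\mu_n(dx)$.

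\textbf{Limsup.} Because $f(t):=cV_{\mathrm{eff}}(ct)$ is convex, decreasing, and vanishes at infinity on $(0,\infty)$, Theorem~\ref{relax} reduces the construction of a recovery sequence to $\xi\in W^{1,1}(0,1)$ increasing; the perturbation $\xi\mapsto\xi+\varepsilon s$ further reduces to $\xi'\ge\varepsilon>0$ a.e. Setting $x_i^n:=\xi(i/n)$ and applying Jensen exactly as in \eqref{subbfa} yields
\[
E_n^{(4)}(x_1^n,\dots,x_n^n) - \frac1n\sum_{i=1}^n x_i^n \;\le\; \beta_n\sum_{k=1}^n\int_0^1 V\!\left(k\beta_n(\xi^n)'(s)\right)ds.
\]
The bound $(\xi^n)'\ge\varepsilon$ and the exponential tails of $V$ provide a uniform dominating constant, while $\beta_n\to c$ and $(\xi^n)'\to\xi'$ a.e.\ give pointwise convergence to $cV_{\mathrm{eff}}(c\xi'(s))$; dominated convergence then yields $\limsup E_n^{(4)}\le E^{(4)}(\xi)$.

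\textbf{Liminf.} Take $\mu_n\weakto\mu$ with $E_n^{(4)}(\mu_n)$ bounded and switch to $\xi^n\to\xi$ in $BV_{\mathrm{loc}}(0,1)$. With $\overline{h}_{j,k}^n:=(n/k)(x_{j+k}^n-x_j^n)$, which equals the average of $(\xi^n)'$ over $(j/n,(j+k)/n)$, and discarding the tail $k>N$ (permitted since $V\ge 0$),
\[
E_n^{(4)}(\mu_n) \;\ge\; \sum_{k=1}^N \frac{\beta_n}{n}\sum_{j=0}^{n-k} V\!\left(k\beta_n\overline{h}_{j,k}^n\right) + \int_\Omega x\,\mu_n(dx).
\]
For each fixed $k\le N$, introduce the piecewise-affine function $\Xi_n^{(k)}$ with $\Xi_n^{(k)}(0)=0$ and $(\Xi_n^{(k)})'\equiv\overline{h}_{j,k}^n$ on $(j/n,(j+1)/n)$; a short telescoping computation (using $x_0^n=0$) gives $\|\Xi_n^{(k)}-\xi^n\|_{L^1_{\mathrm{loc}}}=O(k/n)$, so $\Xi_n^{(k)}\to\xi$ in $BV_{\mathrm{loc}}(0,1)$. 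Fix $\varepsilon>0$; for $n$ large, $\beta_n\in[c(1-\varepsilon),c(1+\varepsilon)]$, and the monotonicity of $V$ on $(0,\infty)$ yields
\[
\frac{\beta_n}{n}\sum_{j=0}^{n-k}V\!\left(k\beta_n\overline{h}_{j,k}^n\right) \;\ge\; c(1-\varepsilon)\int_0^{1-\delta} V\!\left(kc(1+\varepsilon)(\Xi_n^{(k)})'(s)\right)ds
\]
for any $\delta>0$ and $n$ large. Theorem~\ref{relax} applied with density $V(kc(1+\varepsilon)\,\cdot\,)$ shows this integral functional is $BV_{\mathrm{loc}}$-lower-semicontinuous along $\Xi_n^{(k)}\to\xi$, so its $\liminf$ is at least $c(1-\varepsilon)\int_0^{1-\delta}V(kc(1+\varepsilon)\xi'_{\mathrm{ac}}(s))\,ds$. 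Letting successively $\delta\to 0$, $\varepsilon\to 0$ (by monotone convergence, since $V(kc(1+\varepsilon)\cdot)\nearrow V(kc\cdot)$), summing over $k=1,\dots,N$, and finally $N\to\infty$ by monotone convergence of the partial sums $\sum_{k=1}^N V(kc\,\cdot\,)\nearrow V_{\mathrm{eff}}(c\,\cdot\,)$, combined with the lower semicontinuity of the linear term treated as in Theorem~\ref{th:subcritical}, produces $\liminf E_n^{(4)}(\mu_n)\ge E^{(4)}(\xi)$. When $\mu$ has a singular part, $\xi'_{\mathrm{ac}}$ vanishes on a set of positive measure and the logarithmic singularity of $V$ at $0$ forces $V_{\mathrm{eff}}(0)=+\infty$ there, consistent with $E^{(4)}(\mu)=+\infty$.

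\textbf{Main obstacle.} The delicate step is the fixed-$k$ liminf: verifying that the sliding-window primitives $\Xi_n^{(k)}$ converge to $\xi$ in $BV_{\mathrm{loc}}(0,1)$ (with modest boundary bookkeeping), and absorbing the $\beta_n\ne c$ discrepancy through the two-sided $\varepsilon$-sandwich so as to bring Theorem~\ref{relax} to bear. The remaining passages---truncation in $k$, the $\delta$- and $\varepsilon$-limits, and the monotone convergence in $N$---are routine thanks to the non-negativity and exponential decay of $V$.
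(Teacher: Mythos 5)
Your proof is correct and follows essentially the same route as the paper's: Jensen's inequality plus Theorem~\ref{relax} for the limsup, and for the liminf a rewriting of the $k$-th interaction term as an integral of $V$ evaluated at a windowed average of $(\xi^n)'$, followed by truncation in $k$ and lower semicontinuity in $BV$ (the paper works with the two-sided sliding averages $\xi^n_{k,j}$ and reduces to $c=1$ by rescaling at the end, whereas you anchor the window at $0$ and absorb $\beta_n\to c$ through an $\varepsilon$-sandwich). One small correction: $\Xi_n^{(k)}$ converges to $\xi$ only up to the additive constant $\frac1k\sum_{m=0}^{k-1}x^n_m$, which need not vanish when $\xi(0^+)>0$, but this is harmless since the interaction functional depends only on the absolutely continuous part of the derivative.
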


\begin{proof}
Again the compactness follows from Theorem~\ref{compactness}. We first prove the theorem under the assumption that $c=1$, and comment on the general case at the end.

\textbf{Liminf inequality.}
We will show that for every sequence $(x^n_1,\dots,x^n_n)_n$ of $n$-tuples, converging to $\xi$ in $BV_{\mathrm{loc}}$ in the sense of Theorem~\ref{compactness},
\begin{equation}
\label{ineq:th:k=4:liminf}
\liminf_{n\to\infty} E^{(4)}_n(x^n_1,\dots,x^n_n) \geq  E^{(4)}(\xi).
\end{equation}
Take such a sequence $(x^n_1,\dots,x^n_n)_n$.
We first rewrite the functional $E_n^{(4)}$ in a more convenient way.
For every $k\in\mathbb{N}$ we define the function $V^k(t):=V(kt)$. Hence from \eqref{rewrite-E24-integral} we have
\begin{equation}\label{newQn}
E_n^{(4)}(x^n_1,\dots,x^n_n)= \frac{1}{n}\sum_{k=1}^{n}\sum_{j=0}^{n-k}
V^k\left(\frac{x^n_{i+k}-x^n_i}{\frac{k}{n}}\right) + \frac{1}{n}\sum_{i=0}^n x^n_i.
\end{equation}
The expression in the argument of $V^k$ resembles a {gradient}, and we make the gradient term appear explicitly using the affine interpolant $\x^n$ of the $x_i^n$ (see~\eqref{pa}). For fixed $k\in \{0,\dots,n-1\}$ and $\ell\in \{0,\dots,n-1\}$ and for $i\leq\ell\leq k+i-1$, we have
\begin{align}
\frac{x^n_{i+k}-x^n_i}{\frac{k}{n}} =\frac1k \sum_{m=i-\ell}^{i-\ell+k-1}\frac{x^n_{\ell+m+1}-x^n_{\ell+m}}{\frac{1}{n}} = \frac1k \sum_{m=i-\ell}^{i-\ell+k-1}(\x^n)'\left(s+\frac mn\right)
\end{align}
for every $s\in \left(\frac\ell n, \frac{\ell+1}n\right)$. Then
\begin{align*}
\frac1n V^k\left(\frac{x^n_{i+k}-x^n_i}{\frac{k}{n}}\right) &= \frac1k \sum_{\ell=i}^{i+k-1} \int_{\frac\ell n}^{\frac{\ell+1}n}V^k\left(\frac1k \sum_{m=i-\ell}^{i-\ell+k-1}(\x^n)'\left(s+\frac mn\right)\right)ds\\
&\stackrel{(j=\ell-i)}{=} \frac1k \sum_{j=0}^{k-1} \int_{\frac{j+i} n}^{\frac{j+i+1}n}V^k\left(\frac1k \sum_{m=-j}^{k-1-j}(\x^n)'\left(s+\frac mn\right)\right)ds
\end{align*}
Therefore, we can rewrite the first term in \eqref{newQn} in terms of the function $\tilde{x}$, as
\begin{align*}
\sum_{i=0}^{n-k}\frac1n V^k\left(\frac{x^n_{i+k}-x^n_i}{\frac{k}{n}}\right)=& \frac1k \sum_{i=0}^{n-k} \sum_{j=0}^{k-1} \int_{\frac{j+i} n}^{\frac{j+i+1}n}V^k\left(\frac1k \sum_{m=-j}^{k-1-j}(\x^n)'\left(s+\frac mn\right)\right)ds\\
=& \frac1k\sum_{j=0}^{k-1} \int_{\frac{j} n}^{\frac{n-k+j+1}n}V^k\left(\frac1k \sum_{m=-j}^{k-1-j}(\x^n)'\left(s+\frac mn\right)\right)ds\\
=& \frac1k\sum_{j=0}^{k-1} \int_{\frac{j} n}^{1-\frac{k-j-1}n}V^k\left(\frac1k \sum_{m=-j}^{k-1-j}(\x^n)'\left(s+\frac mn\right)\right)ds,
\end{align*}
and the first term of the functional $E_n^{(4)}$ becomes
\begin{equation}\label{Q_ncont}
\frac{1}{n}\sum_{k=1}^{n}\sum_{i=0}^{n-k}
V^k\left(\frac{x^n_{i+k}-x^n_i}{\frac{k}{n}}\right) = \sum_{k=1}^{n}\frac1k\sum_{j=0}^{k-1} \int_{\frac{j} n}^{1-\frac{k-j-1}n}V^k\left(\frac1k \sum_{m=-j}^{k-1-j}(\x^n)'\left(s+\frac mn\right)\right)ds.
\end{equation}

Now fix $\delta>0$ and note that by Theorem~\ref{compactness}, $\x^n$ converges to $\x$ weakly in $BV(0,1-\delta)$. We claim that for every fixed integer $N$ the following lower bound is satisfied:
\begin{equation}\label{bfa}
\liminf_{n\to+\infty} E_n^{(4)}(x^n_1,\dots,x^n_n) \geq \int_0^{1-\delta} V^N_{\textrm{eff}}(\x'(s))\,ds + \int_0^{1-\delta} \x(s)\,ds,
\end{equation}
where the energy density $V^N_{\textrm{eff}}$ is defined as $V^N_{\textrm{eff}}(t):= \sum_{k=1}^{N} V(kt)$ for every $t\in \mathbb{R}$.
This claim implies the lower bound~\eqref{ineq:th:k=4:liminf} by the arbitrariness of $N$ and of $\delta$.

As in the proofs of earlier theorems we focus on the first term of the discrete energy $E_n^{(4)}$; indeed, since $\x^n \to \x$ in $L^1(0,1-\delta)$ (by the $BV$-convergence), the bound on the second term of the energy in terms of the integral of $\x$ on $(0,1-\delta)$ follows.

Let $N$ be fixed (independent of $n$). Then, for $n\geq N$,
\[
\frac{1}{n}\sum_{k=1}^{n}\sum_{i=0}^{n-k}
V^k\left(\frac{x^n_{i+k}-x^n_i}{\frac{k}{n}}\right)
\geq \sum_{k=1}^{N}\frac1k\sum_{j=0}^{k-1} \int_{\frac{j} n}^{1-\frac{k-j-1}n}V^k\left(\frac1k \sum_{m=-j}^{k-1-j}(\x^n)'\left(s+\frac mn\right)\right)ds.
\]
We note that, since $j$ and $k$ run through a finite set independent of $n$, for every $\eta>0$ 
there exists an integer $\nu(\eta)$ such that, if $n\geq \nu(\eta)$, then
\[
\int_{\frac{j} n}^{1-\frac{k-j-1}n}V^k\left(\frac1k \sum_{m=-j}^{k-j-1}(\x^n)'\left(s+\frac mn\right)\right)ds \geq
\int_{\eta}^{1-\delta -\eta}V^k\left(\frac1k \sum_{m=-j}^{k-j-1}(\x^n)'\left(s+\frac mn\right)\right)ds.
\]
for every $j$ and for every $k$. Moreover, for every $k$ and $j$, also the convex combination
\[
\x^n_{k,j}(s):=\frac1k \sum_{m=-j}^{k-j-1}\x^n\left(s+\frac mn\right)
\]
converges to $\x$ weakly in $BV(\eta,1-\delta - \eta)$, since it is bounded in $W^{1,1}(\eta,1-\delta - \eta)$ and has the same $L^1$-limit as the sequence $\x^n$. Therefore, for every $k$ and $j$
\begin{equation}\label{lscBV}
\liminf_{n\to\infty}\int_{\eta}^{1-\delta -\eta}V^k((\x^n_{k,j})'(s))ds \geq \int_{\eta}^{1-\delta -\eta}V^k(\x'(s))ds,
\end{equation}
since the integral functional in \eqref{lscBV} is lower semicontinuous with respect to the weak convergence in $BV$, by e.g.~\cite[Proposition 5.1--Theorem 5.2]{AmbrosioFuscoPallara00}. In conclusion,
\begin{align*}
\liminf_{n\to\infty}E_n^{(4)}(x^n_1,\dots,x^n_n) &\geq  \sum_{k=1}^{N}\frac1k\sum_{j=0}^{k-1}\int_{\eta}^{1- \delta-\eta}V^k(\x'(s))\,ds + \int_0^{1-\delta} \x(s)\,ds\\
& = \sum_{k=1}^{N}\int_{\eta}^{1-\delta-\eta}V^k(\x'(s))\,ds + \int_0^{1-\delta} \x(s)\,ds\\
&= \int_\eta^{1-\delta-\eta}V^N_{\mathrm{eff}}(\x'(s))\,ds + \int_0^{1-\delta} \x(s)\,ds,
\end{align*}
and the claim \eqref{bfa} follows by the arbitrariness of $\eta$.

\medskip

\textbf{Limsup inequality}. By Theorem \ref{relax} we can reduce to proving the existence of a recovery sequence for a function $\x\in W^{1,1}(0,1)$.

\medskip

Therefore, let $\x \in W^{1,1}(0,1)$ be an increasing function such that $E^{(4)}(\x)<\infty$.
For every $n\in \mathbb{N}$ we define the piecewise affine function $\x^n$ and the points $x^n_i$ by $\x^n\left(\frac in\right):= x^n_i := \x\left(\frac in\right)$.
The sequence $\x^n$ converges to $\x$ strongly in $W^{1,1}$, and therefore also in $BV_{\mathrm{loc}}$.

As in~\eqref{newQn} we  write
\begin{align*}
E^{(4)}_n(x^n_1,\dots,x^n_n)= \frac{1}{n}\sum_{k=1}^{n}\sum_{j=1}^{n-k}
V^k\left(\frac{x^n_{j+k}-x^n_j}{\frac{k}{n}}\right) + \frac{1}{n}\sum_{j=0}^n x^n_j.
\end{align*}
Since the second term of the functional converges to the integral of $\x$ in $(0,1)$, we focus on the first term. As in the proof of the previous theorem the convexity of the function $V^k$ implies, by Jensen's inequality,
\begin{equation}
\frac{1}{n}\sum_{k=1}^{n}\sum_{j=1}^{n-k}
V^k\left(\frac{x^n_{j+k}-x^n_j}{\frac{k}{n}}\right)
=
\frac{1}{n}\sum_{k=1}^{n}\sum_{j=0}^{n-k}
V^k\left( \frac{n}{k}\int_{\frac jn}^{\frac{j+k}{n}}\x'(s)\,ds\right) \leq
\sum_{k=1}^{n}\sum_{j=0}^{n-k} \frac{1}{k}\int_{\frac jn}^{\frac{j+k}{n}}
V^k(\x'(s))\,ds.
\end{equation}
Since
\[
\sum_{j=0}^{n-k}\frac{1}{k}\int_{\frac jn}^{\frac{j+k}{n}} V^k(\x'(s))\,ds \leq \int_0^1V^k(\x'(s))\,ds\quad \mbox{for every }k=1,\dots,n,
\]
we have the following estimate for the energy:
\[
 \frac{1}{n}\sum_{k=1}^{n}\sum_{j=1}^{n-k}
V^k\left(\frac{x^n_{j+k}-x^n_j}{\frac{k}{n}}\right) \leq \sum_{k=1}^{n}\int_0^1V^k(\x'(s))\,ds
\leq \int_0^1 V_{\mathrm{eff}}(\x'(s))\,ds,
\]
which proves the desired inequality,
\[
\limsup_{n\to\infty} E^{(4)}_n(x^n_1,\dots,x^n_n)\leq E^{(4)}(\x).
\]

\medskip
\emph{General $c$}. The case of general $c = \lim_{n\to\infty} \beta_n$ follows by rescaling, as in the proof of Theorem~\ref{th:first-critical}. In terms of the scaled measure
\[
\widetilde\mu_n := \frac1n \sum_{i=1}^n \delta_{\beta_nx_i},
\]
the functional $E_n^{(4)}$ reads
\[
E_n^{(4)}(\mu_n) = \frac {n\beta_n}2\iint_\doms V(x-y)\, \widetilde\mu_n\boxtimes\widetilde\mu_n(dxdy) + \frac1{\beta_n}\int_\dom x\, \widetilde\mu_n(dx).
\]
Since $\beta_n$ is bounded away from zero and infinity, the same arguments apply, and we find that the right-hand side $\Gamma$-converges to
\[
\widetilde E^{(4)}(\widetilde\mu) := \begin{cases}
\displaystyle c\int_\dom V_{\mathrm{eff}}\Bigl(\frac 1{\widetilde\rho(x)}\Bigr)\widetilde\rho(x)\, dx
+ \frac1c\int_\dom x\widetilde\rho(x)\, dx & \text{if } \widetilde\mu = \widetilde\rho dx,\\
+\infty &\text{otherwise,}
\end{cases}
\]
where $\mu$ and $\widetilde\mu$ are linked by
\[
\int_\dom \varphi(x)\, \widetilde\mu(dx) = \int_\dom \varphi(cx)\, \mu(dx)\qquad
\text{for all }\varphi\in C_b(\dom).
\]
Back-transformation gives the $\Gamma$-convergence of the unscaled $E^{(4)}_n$ to the $E^{(4)}$ defined in~\eqref{def:E4mu}.
\end{proof}

\begin{thm}[Case 5, supercritical regime: $\beta_n\to\infty$]
\label{th:supercritical}
Let $\beta_n>0$ be a sequence such that $\beta_n\to \infty$ as $n\to \infty$. Then the functionals $E_n^{(5)}$ defined in \eqref{rewrite-E5} $\Gamma$-converge with respect to the strong convergence in $L^1_{loc}$ to the functional $E^{(5)}$ defined for $\x\in BV_{loc}(0,1)$, $\x$ increasing, as:
\begin{equation}\label{treC}
E^{(5)}(\x) = \begin{cases}
\displaystyle\int_0^1 \xi(s)\,ds &\text{if } \x'(s) \geq 1 \text{ for a.e. }0<s<1,\\
+\infty &\text{otherwise},
\end{cases}
\end{equation}
or equivalently, in terms of measures $\mu$ linked to $\x$ by~\eqref{link:mu-xi},
\[
E^{(5)}(\mu) = \begin{cases}
\displaystyle
\int_\dom x\, \mu(dx) & \text{if } \mu \leq \Lebesgue,\\
+\infty &\text{otherwise}.
\end{cases}
\]
\end{thm}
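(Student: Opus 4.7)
The plan is to verify the compactness and the two $\Gamma$-convergence inequalities in the measure formulation. Compactness follows directly from Theorem~\ref{compactness}: since $V\geq 0$, we have $E_n^{(5)}(\mu_n) \geq (1/n)\sum_i x_i^n$, so a uniform energy bound yields uniform first-moment control on $\mu_n$.

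For the liminf, fix $\mu_n \weakto \mu$. Because the interaction term is non-negative and a truncation argument gives $\liminf \int x\,\mu_n(dx) \geq \int x\,\mu(dx)$, the only non-trivial point is that a uniform energy bound forces $\mu \leq \mathcal{L}$. Suppose instead that $\mu(I) > b - a$ for some open interval $I = (a,b)$; weak convergence yields $\mu_n(I) \geq (1+\delta)(b-a)$, so at least $N_n \geq (1+\delta) n(b-a)$ of the $x_i^n$ lie in $I$ for some $\delta > 0$ and $n$ large. A pigeonhole argument on the $N_n - 1$ nearest-neighbor gaps in $I$ shows that, provided $\epsilon < \delta$, at least $c n$ of those gaps must satisfy $d_j < (1-\epsilon)/n$, for otherwise their sum would exceed $b - a$. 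Writing $\Lambda_n := (1/2\pi)\log(2\beta_n^2/\pi)$ and $\phi_n(t) := (\beta_n^2/\Lambda_n)V(\Lambda_n t)$, each such small gap contributes at least $(1/n)\phi_n(1-\epsilon)$ to the interaction term. The exponential tail $V(s) \sim (2/\pi)\,s e^{-2\pi s}$ gives $\phi_n(1-\epsilon) \sim \mathrm{const}\cdot\beta_n^{2\epsilon} \to \infty$, so the total contribution $c\,\phi_n(1-\epsilon)$ diverges, contradicting the energy bound.

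For the limsup, I would reduce by density to $\mu = \rho\,dx$ with $\rho$ compactly supported and $\rho \leq 1-\delta$ strictly, using for instance the approximation $\rho_k(x) := (1-1/k)\rho((1-1/k)x)$, which preserves unit mass, satisfies $\rho_k \leq 1-1/k$, and gives $E^{(5)}(\mu_k) \to E^{(5)}(\mu)$. For such $\rho$, define $x_i^n$ by the quantile rule $\int_0^{x_i^n} \rho(x)\,dx = i/n$, so that $x_{j+k}^n - x_j^n \geq k/(n(1-\delta))$. Substituting the exponential tail of $V$ and summing over $k$ produces a geometric series with ratio $(2\beta_n^2/\pi)^{-1/(1-\delta)} \to 0$, so the full interaction term is bounded by $\mathrm{const}\cdot \beta_n^{-2\delta/(1-\delta)} \to 0$, while the linear term $(1/n)\sum_i x_i^n$ converges to $\int x\rho\,dx$ in the standard way.

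The main obstacle is the pigeonhole step in the liminf: one must convert the global excess $\mu(I) > |I|$ into \emph{many} simultaneous small nearest-neighbor gaps rather than just one, and then match the resulting count of $cn$ pairs, each carrying weight $1/n$, against the only polynomial divergence $\phi_n(1-\epsilon) \sim \beta_n^{2\epsilon}$. This balance, delicately tuned by the rescaling prefactor $\beta_n^2/(n\Lambda_n)$, is exactly what makes the threshold $\rho \leq 1$ sharp.
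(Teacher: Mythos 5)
Your proposal is correct, but both halves take a genuinely different route from the paper's proof. For the liminf, the paper keeps only the nearest-neighbour terms, rewrites that partial sum as $\int_0^1 V_n((\x^n)'(s))\,ds$ with $V_n(t)=\frac{\pi}{2}\frac{e^{2\pi\alpha_n}}{\alpha_n}V(\alpha_n t)$ and $\alpha_n=\frac{1}{2\pi}\log(\frac2\pi\beta_n^2)$, proves $\liminf_n V_n(t_n)\geq V_\infty(t)$ for $t_n\to t$, and then cites a lower-semicontinuity result for integral functionals with varying integrands (Braides--Gelli); your pigeonhole argument --- excess mass $\mu(I)>|I|$ forces at least $cn$ consecutive gaps below $(1-\epsilon)/n$, each contributing $\frac1n\phi_n(1-\epsilon)\sim \beta_n^{2\epsilon}/n$ to the energy --- reaches the same conclusion in a self-contained way, and you rightly observe that a \emph{single} small gap would not do, since $\beta_n^{2\epsilon}/n$ need not diverge. (Two small points that do not affect the argument: the gap count actually requires $\epsilon<\delta/(1+\delta)$ rather than $\epsilon<\delta$, and one should remark that $\mu\leq\Lebesgue$ follows from $\mu(I)\leq|I|$ on open intervals by outer regularity.) For the limsup, the paper reduces to piecewise affine $\x$ and perturbs the slopes by $\delta_n\to\ell$ chosen so that the interaction term vanishes, handling the multi-slope case by a separate estimate on $x^n_{i+k}-x^n_i$; your quantile construction from a compactly supported density $\rho\leq 1-\delta$, followed by summation of the geometric series over all $k$, is arguably cleaner and avoids that case analysis, at the price of one additional routine truncation step to obtain compact support before applying the dilation $\rho_k(x)=(1-1/k)\rho((1-1/k)x)$. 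Your version is more elementary and self-contained; the paper's integral-functional formulation of the liminf has the advantage of being uniform with the treatment of the other regimes.
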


Note that the inequality $\mu\leq\Lebesgue $ is intended in the sense of measures, i.e. $\mu(A)\leq \Lebesgue(A)$ for all $A\subset\dom$ measurable. Equivalently, one can require that $\mu\ll\Lebesgue $ and $\d\mu/\d\Lebesgue \leq 1$.

\begin{proof}
First of all, we define the sequence $\alpha_n:=\frac{1}{2\pi}\log\left(\frac{2}{\pi}\beta_n^2\right)$ and rewrite the energy \eqref{rewrite-E5} in terms of the new sequence, as
\begin{equation}\label{rewrite-E5_2}
E_n^{(5)}(x_1,\dots, x_n) = \frac{\pi}{2}\,\frac{e^{2\pi\alpha_n}}{n\alpha_n} \sum_{k=1}^n\sum_{j=1}^{n-k} V\left(n\alpha_n(x_{j+k}-x_j)\right) + \frac1n \sum_{j=1}^n x_j.
\end{equation}
Notice that $\beta_n^2 = \frac{\pi}{2}e^{2\pi\alpha_n}$ and that $\alpha_n\gg1$, since $\beta_n\gg1$.

\medskip

\textbf{Liminf inequality.}
Let $(x_1^n,\dots,x_n^n)$ be a sequence of $n$-vectors such that the piecewise affine interpolation $\x^n$, as defined in Theorem~\ref{compactness} converges in $BV_{\mathrm{loc}}(0,1)$ to some $\xi$.
As for the other cases, the second term in the discrete energy $E^{(5)}_n$ is lower semi-continuous with respect to this convergence, and therefore we focus on the first term.

The energy satisfies the trivial estimate
$$
\sum_{k=1}^n\sum_{i=0}^{n-k}V\left(n\alpha_n(x^n_{i+k}-x^n_i)\right) \geq
\sum_{i=0}^{n-1}V\left(n\alpha_n(x^n_{i+1}-x^n_i)\right),
$$
since every term in the sum is nonnegative. Now, following the proof of the liminf inequality in Theorem \ref{SCR} we can write
$$
\frac1n V\left(\alpha_n\,\frac{x^n_{i+1} - x^n_i}{\frac1n} \right) = \int_{\frac in}^{\frac{i+1}{n}} V(\alpha_n(\x^n)'(s)) ds,
$$
and therefore
\[
\frac1n\sum_{i=0}^{n-1}V\left(n\alpha_n(x^n_{i+1}-x^n_i)\right) = \int_0^1 V(\alpha_n(\x^n)'(s)) ds.
\]

Hence, we have the following bound for the first term of the energy:
\begin{align*}
\frac{\pi}{2}\frac{e^{2\pi \alpha_n}}{n\alpha_n}\sum_{k=1}^n\sum_{i=0}^{n-k}V\left(n\alpha_n(x^n_{i+k}-x^n_i)\right) \geq \int_0^1 V_n((\x^n)'(s))ds,
\end{align*}
where $V_n(t):= \frac{\pi}{2}\frac{e^{2\pi \alpha_n}}{\alpha_n}V(\alpha_n t)$. We claim that for any $t_n\to t$,
\begin{equation}
\label{ineq:Vn}
\liminf_{n\to\infty}V_n(t_n) \geq V_{\infty}(t):=
\begin{cases}
+\infty & \mbox{if }\, 0\leq t<1,\\
0 & \mbox{if }\, t \geq 1.
\end{cases}
\end{equation}
By e.g.~\cite[Prop.~2.2]{BraidesGelli02a}, this inequality implies that
\[
\liminf_{n\to\infty} E_n^{(5)}(x_1^n,\dots,x_n^n) \geq E^{(5)}(\x).
\]

To prove~\eqref{ineq:Vn}, we only need to show that if $0\leq t<1$, then $V_n(t_n)\to\infty$.
This can be easily proved in the following way. Since $0\leq t<1$ can be rewritten as $t:=1-\eta$ (for $0<\eta\leq 1$), then for a sequence $t_n$ converging to $t$, we can
 assume that $|t_n-t|\leq \frac{\eta}{2}$, and in particular $t_n\leq 1-\frac{\eta}2$.
Since  $V_n$ is decreasing for every $n$, we have the bound $V_n(t_n)\geq V_n\left(1-\frac{\eta}{2}\right)$ for $n$ large enough.
Therefore
$$
\liminf_{n\to\infty}V_n(t_n) \geq \lim_{n\to\infty} V_n\left(1-\frac{\eta}{2}\right)=\infty,
$$
which concludes the proof.

\medskip

\textbf{Limsup inequality.} We can once more invoke Theorem \ref{relax}, since $f(t)=(\mbox{sc}^- V_{\infty})(t)$ satisfies the assumptions of the theorem, and
construct a recovery sequence only for increasing functions $\x \in W^{1,1}(0,1)$ such that $E^{(5)}(\x)<\infty$.

By density we can further reduce to $\x$ piecewise affine.

\medskip

Let us first assume that $\x$ is linear, i.e., $\x(s)=(1+\ell)s$, for $\ell\geq0$ (since $E^{(5)}(\x)<\infty$).

The recovery sequence $x_i^n$ can be constructed in the following way.
Let $\delta_n>0$ be a sequence such that $\delta_n\to \ell$; then we set $x_i^n:= (1+\delta_n)\frac in$ for $i=0,\dots,n$. The sequence $(x_i^n)$ is increasing, $x_0^n=0$ for every $n$, and
\begin{align}\label{E5sup}
E^{(5)}_n(x^n_1,\dots,x^n_n) =& \frac{\pi}{2}\frac{e^{2\pi\alpha_n}}{n\alpha_n}\sum_{k=1}^n\sum_{i=0}^{n-k}V\left(n\alpha_n(x^n_{i+k}-x^n_i)\right) + \frac1n\sum_{i=0}^n x^n_i \nonumber\\
=& \frac{\pi}{2}\frac{e^{2\pi\alpha_n}}{n\alpha_n}\sum_{k=1}^n\sum_{i=0}^{n-k}V\left(\alpha_nk(1+\delta_n)\right) + \frac{(1+\delta_n)}{n^2}\sum_{i=0}^n i.
\end{align}
We claim that the sequence $\delta_n$ can be chosen so that
\begin{equation}\label{claim5}
\lim_{n\to\infty} \frac{e^{2\pi\alpha_n}}{n\alpha_n}\sum_{k=1}^n\sum_{i=0}^{n-k}V\left(\alpha_nk(1+\delta_n)\right) = 0.
\end{equation}
We focus on the term $k=1$ in the sum in \eqref{claim5}; we note that it is an upper bound for the other terms in the sum, corresponding to $k\geq 2$.
Using the form of $V$ for large values of $t$ recalled above, we can rewrite the term $k=1$ as

\begin{align*}
\frac{e^{2\pi\alpha_n}}{n\alpha_n}\sum_{i=0}^{n-1}V\left(\alpha_n(1+\delta_n)\right) &\sim \frac{e^{2\pi\alpha_n}}{n\alpha_n}n (\alpha_n(1+\delta_n))e^{-2\pi(\alpha_n(1+\delta_n))}\\
&= (1+\delta_n)e^{-2\pi\alpha_n\delta_n},
\end{align*}
up to a remainder going to zero exponentially fast as $n\to \infty$.
Clearly, the sequence $\delta_n$ can always be chosen so that the last expression converges to zero (indeed, even if $\ell=0$ we can choose $\delta_n \to 0$ such that $\alpha_n\delta_n\to \infty$).

Therefore, the claim \eqref{claim5} follows directly, since every other term in the sum is estimated by the term $k=1$; in conclusion,
$$
\limsup_{n\to\infty}E_n^{(5)}(x^n) \leq \frac12(1+\ell) = \int_0^1x(s)ds = E^{(5)}(x).
$$
To illustrate the general case of $x$ piecewise affine we can reduce to $x$ of the form
\begin{equation}
x(t)=\begin{cases}
\medskip
(1+\ell_1)t \quad &\mbox{if }\,t\leq t^* \\
(1+\ell_2)\left(t-t^*\right) + (1+\ell_1)t^* \quad &\mbox{if }\,t\geq t^*,
\end{cases}
\end{equation}
where, for example $0<\ell_1<\ell_2$, and $t^*\in (0,1)$. Then assuming that $nt^*\in \mathbb{N}$, the approximating sequence is defined as
\begin{equation}
x^n_i=\begin{cases}
\medskip
(1+\ell_1)\frac{i}{n} \quad &\mbox{if }\,0\leq i\leq nt^* \\
(1+\ell_2)\left(\frac{i}{n}- t^*\right) + (1+\ell_1)t^* \quad &\mbox{if }\,nt^*\leq i\leq n.
\end{cases}
\end{equation}
Clearly $x_i^n$ converges to $x$ in $L^1$.

We claim that for every $i$ and $k$
\begin{equation}\label{Est:slope}
x_{i+k}^n - x_i^n \geq (1+\ell_1)\frac{k}{n}.
\end{equation}
Before proving \eqref{Est:slope} we notice that it implies the required bound for the energy. Indeed, it allows us to reduce to the case of one
single slope, which was already treated in the first part of the proof.

It remains to prove \eqref{Est:slope}. We first notice that if $i, i+k \leq nt^*$ or $i, i+k \geq nt^*$, then \eqref{Est:slope} follows immediately, since $\ell_1<\ell_2$. So we assume that $i< nt^* \leq i+k$. Then
\begin{align*}
x_{i+k}^n - x_i^n &= (1+\ell_2)\left(\frac{i}{n}- t^*\right) + (1+\ell_1)t^* - (1+\ell_1)\frac{i}{n}\\
&= (\ell_2-\ell_1)\frac in + (1+\ell_2)\frac{k}n - (\ell_2 - \ell_1)t^*\\
&\geq (\ell_2-\ell_1)\frac in + (1+\ell_2)\frac{k}n - (\ell_2 - \ell_1)\left(\frac{i+k}{n}\right),
\end{align*}
where the last inequality follows from the assumption $i< nt^* \leq i+k$. The last term is exactly $(1+\ell_1)\frac{k}{n}$, and therefore the claim is proved.
\end{proof}

\section{Uniqueness and convergence of minimizers}\label{sec:uniqueness}

We now prove Theorem~\ref{lem:uniqueness} and Corollary~\ref{cor:minimizers-converge}.

\begin{proof}[Proof of Theorem~\ref{lem:uniqueness}]
To show existence, we take each of the cases $k=\{1,2,3,4,5\}$ in turn and consider a minimizing sequence $\mu_m$ for each of them. Note that we only need to prove compactness, since each of the $\Gamma$-limits is automatically lower semicontinuous. For $E^{(2)}$, $E^{(3)}$, $E^{(4)}$, and $E^{(5)}$, the compactness is immediate, since boundedness of $E^{(k)}(\mu_m)$ implies boundedness of $\int_\dom x\, d\mu_m(x)$, and therefore tightness. For $E^{(1)}$ we use the same argument as in the proof of Theorem~\ref{th:subcritical} to show that $\int_\dom x\,\mu_m(dx)$ is bounded

We prove uniqueness by proving strict convexity, either in $\mu$ or in $\x$.
Note that convexity of $\x\mapsto E^{(k)}(\x)$ corresponds to displacement convexity. Also note that the term $\int_\dom x\mu = \int_0^1 \x(s)\, ds$ is convex in both senses, and therefore we only need to prove strict convexity of the interaction terms in each of the $E^{(k)}$.

We treat the cases separately. The functional $\mu\mapsto E^{(3)}(\mu)$ is strictly convex because the function $\rho\mapsto\rho^2$ is strictly convex. Similarly, since $s\mapsto V_{\mathrm{eff}}(s)$ is strictly convex for $s>0$, the function $\x\mapsto E^{(4)}(\x)$ is strictly convex. Writing $E^{(2)}$ as
\[
E^{(2)}(\x) = \frac c2 \int_0^1\int_0^1 V\bigl(c(\x(t)-\x(s))\bigr)\, dsdt + \int_0^1 \x(s)\, ds
=  c\int_0^1\int_0^t V\bigl(c(\x(t)-\x(s))\bigr)\, dsdt + \int_0^1 \x(s)\, ds.
\]
Since $s\mapsto V(s)$ is strictly convex for $s>0$, it follows that $E^{(2)}$ is strictly convex. For $E^{(1)}$ a similar argument applies.

The functional $E^{(5)}$ is non-strictly convex; but one finds in a straightforward manner that $\mu = \Lebesgue\bigr|_{[0,1]}$ is the unique minimizing measure.
\end{proof}

\begin{proof}[Proof of Corollary~\ref{cor:minimizers-converge}]
Fix the case $k\in\{1,2,3,4,5\}$.
Given a sequence of minimizers $(x_1^n,\dots,x_n^n)$ of $E_n^{(k)}$, we again set $\mu_n = \frac1n \sum_{i=1}^n \delta_{x_i^n}$. Taking $\mu$ to be the unique minimizer of $E^{(k)}$ given by Theorem~\ref{lem:uniqueness}, by Theorem~\ref{th:main} there exists a recovery sequence $\widehat \mu_n\weakto \mu$ along which $E_n^{(k)}(\widehat\mu_n)$ converges and therefore remains bounded. Since $\mu_n$ are minimizers, $E_n^{(k)}(\mu_n)\leq E_n^{(k)}(\widehat \mu_n)$ also remains bounded. By the compactness statement of Theorem~\ref{th:main}, this imples that $\mu_n$ is compact.

Therefore $\mu_n$ converges along a subsequence $n_k$ to a limit $\widetilde \mu$;  the minimality of $\mu_{n_k}$ transfers to $\widetilde \mu$, so that $\widetilde \mu$ is also a minimizer of the limit $E^{(k)}$ (see e.g.~\cite[Corollary~7.20]{DalMaso93}). By Theorem~\ref{lem:uniqueness} this minimizer is unique, $\widetilde \mu = \mu$,  and the whole sequence $\mu_n$ converges.
\end{proof}



\appendix
\section{}

\subsection{$V$ as a convolution}\label{convolution}
In this section we prove that V can be written as a convolution. Our definition of the Fourier transform and its inverse is
\begin{align*}
\widehat{f}(\xi):= \int_{\mathbb{R}}e^{-2\pi i \xi x}f(x)\,dx,\qquad
\check{f}(x):= \int_{\mathbb{R}}e^{2\pi i x \xi}f(\xi)\,d\xi.
\end{align*}
We claim that $V = W\ast W$, where $W:= \check{U}$,  $U:=\sqrt{\widehat{V}}$, and $\widehat V\geq0$.

\medskip

First of all we notice that, if the function $W$ is well-defined, then $V=W\ast W$. Indeed, since
\begin{align*}
V = W\ast W \Leftrightarrow \hat{V} = \widehat{W\ast W},
\end{align*}
by elementary properties of the Fourier transform and by the definition of $W$ and $U$ we have
\begin{equation*}
\hat{V} = \widehat{W\ast W} = \hat{W}^2 = U^2,
\end{equation*}
which proves the claim.

\bigskip

Now we prove that the function $W$ is well-defined.

\medskip

We first observe that since $V$ is even, $\hat{V}$ is real-valued. Moreover, the Fourier transform of $V$ can be computed explicitly, as we now show.

\subsubsection{Fourier transform of $V$}

We start computing the Fourier transform of the function $\varphi(x)=\frac{x}{\sinh^2(\pi x)}=  -V'(x)$.
We can rewrite the function $\varphi$ as
\begin{equation}
\varphi(x)=-\frac{x}{\pi}\,\frac{d}{dx}\coth(\pi x).
\end{equation}
In order to compute the Fourier transform of $\varphi$, we first note that $\varphi$ has a non-integrable singularity at the origin; therefore $\varphi$ should be considered a tempered distribution, whose effect on a Schwartz function $\psi$ is defined as the principle-value integral
\[
\langle \varphi,\psi\rangle := - PV \int_\R \frac x\pi \Bigl(\frac d{dx} \coth(\pi x)\Bigr)\psi(x)\, dx = \frac1\pi PV \int_\R \coth(\pi x) \frac d{dx} (x\psi(x))\, dx.
\]
Below we show that
\begin{equation}\label{FT:coth}
\widehat{\coth(\pi\cdot)}(\xi) = -i \coth(\pi \xi),
\end{equation}
from which it follows by the properties of Fourier transforms that
\[
\widehat \varphi(\xi) = -\frac i\pi \frac d{d\xi}\bigr( \xi \coth(\pi\xi)\bigr)
\qquad\text{and}\qquad \widehat V(\xi) =  \frac{1}{2\pi^2\xi}\frac d{d\xi}\bigr( \xi \coth(\pi\xi)\bigr).
\]
From the explicit expression of $\hat{V}$ we can see that it is nonnegative. Since $\hat{V}$ is an even function, it is sufficient to
check that it is positive for $\xi>0$. Writing more explicit the last term in \eqref{FT:coth} we have
\begin{align*}
\partial_{\xi} \Big(\xi \coth(\pi \xi) \Big) = \coth(\pi\xi) - \frac{\pi\xi}{\sinh^2(\pi \xi)} =
\frac{1}{\sinh(\pi\xi)}\left(\cosh(\pi\xi) - \frac{\pi\xi}{\sinh(\pi\xi)} \right).
\end{align*}
And, since $\sinh t \geq t$ for $t>0$, from the previous expression we get
\begin{align}
\partial_{\xi} \Big(\xi \coth(\pi \xi) \Big) \geq \frac{1}{\sinh(\pi\xi)}\left(\cosh(\pi\xi) - 1 \right),
\end{align}
which is positive for $\xi >0$ since $\cosh t \geq 1$ for every $t$. Then this proves that $\hat{V}(\xi)>0$ for $\xi>0$.

\medskip

This proves in turn that the function $U$ is well defined, and it is even, since $\hat{V}$ is even.
Hence the function $W$ is real (once again, since $W$ is defined as the inverse Fourier transform of $\check{U}$ and $\check{U}$ has a non-integrable singularity at the origin, $\check{U}$ should be considered a tempered distribution).

\bigskip
To prove~\eqref{FT:coth} it is convenient to use the representation of the hyperbolic cotangent in terms of a series, i.e.,
$$
\coth x = x \sum_{k=-\infty}^{\infty}\frac{1}{k^2\pi^2 + x^2} = \frac{1}{x} + 2\sum_{k= 1}^{\infty}\frac{x}{k^2\pi^2 + x^2};
$$
then we have
$$
\coth(\pi x) = \frac{1}{\pi x} + 2\pi\sum_{k= 1}^{\infty}\frac{x}{k^2\pi^2 + \pi^2 x^2}.
$$
Therefore, using the previous expression, we have
\begin{align}\label{FT:2-3}
\widehat{\coth(\pi\cdot)}(\xi) = \int_{\mathbb{R}}\frac{e^{-2\pi i \xi x}}{\pi x}\,dx +
2\pi\sum_{k=1}^{\infty}\int_{\mathbb{R}}e^{-2\pi i \xi x}\,\frac{x}{k^2\pi^2+\pi^2x^2}\,dx.
\end{align}
We compute the two integrals in the right-hand side of the previous expression separately. For the first term we have
\begin{equation}\label{FT:2}
\int_{\mathbb{R}}\frac{e^{-2\pi i \xi x}}{\pi x}\,dx = -i \,\textrm{sgn}(\xi).
\end{equation}
For the second term we claim that
\begin{equation}\label{FT:3}
\int_{\mathbb{R}}e^{-2\pi i \xi x}\,\frac{x}{k^2\pi^2+\pi^2x^2}\,dx = \frac{i}{\pi}\, e^{-2k\pi |\xi|} \,\textrm{sgn}(\xi).
\end{equation}
To prove this we need some preliminary steps. First of all, elementary calculations show that
\begin{equation}\label{FT:3a}
\int_{\mathbb{R}}e^{-2\pi i \xi x}\,\frac{x}{k^2\pi^2+\pi^2x^2}\,dx = \frac{i}{2\pi} \partial_\xi \int_{\mathbb{R}}e^{-2\pi i \xi x}\,\frac{1}{k^2\pi^2+\pi^2x^2}\,dx.
\end{equation}
Moreover, the Fourier transform of the function $f$ defined as
$$
f(x):= \frac{2a}{a^2+(2\pi x)^2}
$$
is $\hat{f}(\xi) = e^{-a|\xi|}$; using this formula in our case leads to
\begin{equation}\label{FT:3b}
\int_{\mathbb{R}}e^{-2\pi i \xi x}\,\frac{1}{k^2\pi^2+\pi^2x^2}\,dx = \frac{1}{\pi k}\, e^{-2\pi k|\xi|}.
\end{equation}
Therefore, combining \eqref{FT:3a} and \eqref{FT:3b} we have
$$
\int_{\mathbb{R}}e^{-2\pi i \xi x}\,\frac{x}{k^2\pi^2+\pi^2x^2}\,dx = \frac{i}{2\pi} \partial_\xi \frac{1}{\pi k}\, e^{-2\pi k|\xi|} = - \frac{i}{\pi}\,\textrm{sgn}(\xi)\,e^{-2\pi k|\xi|},
$$
which proves the claim \eqref{FT:3}. Finally, from relations \eqref{FT:2-3}, \eqref{FT:2} and \eqref{FT:3} we have
\begin{align}\label{FT:4}
\widehat{\coth(\pi\cdot)}(\xi) = -i \,\textrm{sgn}(\xi) - 2\, i \,\textrm{sgn}(\xi)\sum_{k=1}^{\infty}e^{-2\pi k|\xi|}.
\end{align}

At this point we make use of the expression of the hyperbolic cotangent in terms of an infinite series, for negative values of its argument, i.e.,
$$
\coth x = -1 -2\sum_{k=1}^\infty e^{2kx}, \quad x<0;
$$
then \eqref{FT:4} reduces simply to
\begin{equation}\label{FT:coth2}
\widehat{\coth(\pi\cdot)}(\xi) = i \,\textrm{sgn}(\xi)\coth(\pi(-|\xi|)) = -i \coth(\pi \xi).
\end{equation}

\medskip

\bigskip
\bigskip
\centerline{\textsc{Acknowledgments}}
\bigskip
\noindent
The authors would like to thank Jan Zeman, Andrea Braides, and Massimiliano Morini for their fruitful comments on the paper. The research of L. Scardia was carried out under the project number M22.2.09342 in the framework of the Research Program of the Materials innovation institute (M2i) (www.m2i.nl). The research of M. A. Peletier has received funding from the ITN ``FIRST'' of the Seventh Framework Programme of the European Community (grant agreement number 238702).

\addcontentsline{toc}{chapter}{References}


\begin{thebibliography}{9}

\bibitem{AliCic} Alicandro R. and Cicalese M.: A General Integral Representation Result for Continuum Limits of Discrete Energies with Superlinear Growth. \textit{SIAM J. Math. Anal.}, \textbf{36}/1 (2004), 1--37.

\bibitem{ACP} Alicandro R., Cicalese M. and Ponsiglione M.: Variational equivalence between Ginzburg-Landau, XY spin systems and
screw dislocation energies. \textit{Indiana Univ. Math. J.} To appear.

\bibitem{AmbrosioFuscoPallara00} Ambrosio L., Fusco N. and Pallara D.: Functions of Bounded Variation and Free Discontinuity Problems. Oxford Mathematical Monographs. Oxford University Press, first edition, 2000.

\bibitem{AmbrosioGigliSavare05} Ambrosio L., Gigli N. and Savar\'e G.: Gradient Flows in Metric Spaces and in the Space of Probability Measures. Lectures in mathematics ETH Z{\"u}rich. Birkh{\"a}user, 2005.

\bibitem{AsaroRice77} Asaro R.J. and Rice J.R.: Strain localization in ductile single crystals. \textit{J. Mech. Phys. Solids}, \textbf{25} (1977), 309-338.

\bibitem{BaskaranMesarovicetc10} Baskaran R., Sreekanth A., Mesarovic S. and Zbib H.M.: Energies and distributions of dislocations in stacked pile-ups.
\textit{Int. J. Solids Struc.}, \textbf{47} (2010), 1144--1153.

\bibitem{Billingsley99} Billingsley P.: Convergence of probability measures. Wiley-Interscience, 1999.

\bibitem{BlakeZisserman} Blake A. and Zisserman A.: Visual Reconstruction. The MIT Press, Cambridge, Massachussets, 1987.

\bibitem{BraidesCicalese07} Braides A. and Cicalese M.: Surface energies in nonconvex discrete systems. \textit{Math. Models Methods Appl. Sci.},
\textbf{17} (2007), 985--1037.

\bibitem{BraidesDalMasoGarroni99} Braides A., Dal Maso G. and Garroni A.: Variational formulation of softening phenomena in fracture mechanics:
the one-dimensional case. \textit{Arch. Ration. Mech. Anal.}, \textbf{146} (1999), 23--58.

\bibitem{BraidesGelli02a} Braides A. and Gelli M.S.: Continuum limits of discrete systems without convexity hypotheses. \textit{Math. Mech. Solids},
\textbf{7}/1 (2002), 41--66.

\bibitem{BraidesMaria06} Braides A. and Gelli M.S.: From discrete systems to continuous variational problems: an introduction.
\textit{Topics on Concentration Phenomena and Problems with Multiple Scales} (2006), 3--77.

\bibitem{BLO} Braides A., Lew A. and Ortiz M.: Effective cohesive behavior of layers of interatomic planes. \textit{Arch. Ration. Mech. Anal.},
\textbf{180} (2006), 151--182.

\bibitem{DalMaso93} Dal Maso G.: An Introduction to $\Gamma$-Convergence. Birkh{\"a}user, 1993.

\bibitem{DeGeusPeerlingsHirschberger11TR} Geus, de T.W.J., Peerlings R.H.J. and Hirschberger C.B.: An analysis of the
pile-up of infinite walls of edge dislocations. Submitted paper.

\bibitem{DengEl-Azab09} Deng J. and El-Azab A.: Mathematical and computational modelling of correlations in dislocation dynamics.
\textit{Model. Simul. Mater. Sci. Eng.}, \textbf{17} (2009).

\bibitem{El-Azab00} El-Azab A.: Statistical mechanics treatment of the evolution of dislocation distributions in single crystals.
\textit{Phys. Rev. B}, \textbf{61}/18 (2000), 11956--11966.

\bibitem{EshelbyFrankNabarro51} Eshelby J.D., Frank F.C. and Nabarro F.R.N.: The equilibrium of linear arrays of dislocations.
\textit{Philos. Mag.}, \textbf{43} (1951), 351--364.

\bibitem{EversBrekelmansGeers04} Evers L.P., Brekelmans W.A.M. and Geers M.G.D.: Scale dependent crystal plasticity framework with dislocation density
and grain boundary effects. \textit{Int. J. Solids Struc.}, \textbf{41} (2004), 5209--5230.

\bibitem{FleckMullerAshbyHutchinson94} Fleck N.A., Muller G.M., Ashby M.F. and Hutchinson J.W.: Strain gradient plasticity: theory and experiment.
\textit{Acta Metall. Mater.}, \textbf{42}/2 (1994), 475--487.

\bibitem{Groma97} Groma I.: Link between the microscopic and mesoscopic length-scale description of the collective behavior of dislocations.
\textit{Phys. Rev. B}, \textbf{56}/10 (1997), 5807--5813.

\bibitem{GromaBalogh99} Groma, I. and Balogh, P.: Investigation of dislocation pattern formation in a two-dimensional self-consistent field approximation.
\textit{Acta Mater.}, \textit{47}/13 (1999), 3647--3654.

\bibitem{GromaCsikorZaiser03} Groma I., Csikor F.F. and Zaiser M.: Spatial correlation and higher-order gradient terms in a continuum description of dislocation
dynamics. \textit{Acta Mater.}, \textbf{51} (20030, 1271--1281.

\bibitem{Hall11} Hall C.L.: Asymptotic analysis of a pile-up of edge dislocation walls. \textit{Mat. Sci. Eng. A - Struct.} (2011), To appear.

\bibitem{Hall51a} Hall E.O.: The deformation and ageing of mild steel: III. Discussion of results.	\textit{Proc. Phys. Soc. Sect. B}, \textbf{64} (1951).

\bibitem{HeadLouat55} Head A.K. and Louat N.: The distribution of dislocations in linear arrays. \textit{Aust. J. Phys.}, \textbf{8}/1 (1955), 1--7.

\bibitem{Hill66} Hill R.: Generalized constitutive relations for incremental deformation of metal crystals by multislip. \textit{J. Mech. Phys. Solids},
\textbf{14} (1966), 95--102.

\bibitem{HillHavner82} Hill R. and Havner K.S.: Perspectives in the mechanics of elastoplastic crystals. \textit{J. Mech. Phys. Solids}, \textbf{30}/5 (1982).

\bibitem{HillRice72} Hill R. and Rice J.R.: Constitutive analysis of elastic-plastic crystals at arbitrary strain. \textit{J. Mech. Phys. Solids},
\textbf{20} (1972), 401--413.

\bibitem{HullBacon01} Hull D. and Bacon D.J.: Introduction to dislocations. Butterworth-Heinemann, 2001.

\bibitem{LimkumnerdVan-der-Giessen08} Limkumnerd S. and Van der Giessen E.: Statistical approach to dislocation dynamics:
From dislocation correlations to a multiple-slip continuum theory of plasticity. \textit{Phys. Rev. B}, \textbf{77}/18 (2008).

\bibitem{McCann97} McCann R.J.: A Convexity Principle for Interacting Gases. \textit{Adv. Math.}, \textbf{128} (1997), 153--179.

\bibitem{MesarovicBaskaranetc10} Mesarovic S., Baskaran R. and Panchenko A.: Thermodynamic coarsening of dislocation mechanics and the
size-dependent continuum crystal plasticity. \textit{J. Mech. Phys. Solids}, \textbf{58}/3 (2010), 311--329.

\bibitem{Petch53} Petch N.J.: The cleavage strength of polycrystals. \textit{J. Iron Steel Institute}, \textbf{174}/1 (1953), 25--28.

\bibitem{RoyAcharya06} Roy A. and Acharya A.: Size effects and idealized dislocation microstructure at small scales: prediction of a phenomenological
model of Mesoscopic Field Dislocation Mechanics. Part II. \textit{J. Mech. Phys. Solids}, \textbf{54}/8 (2006), 1711-1743.

\bibitem{RoyPeerlingsGeersKasyanyuk08} Roy A., Peerlings R.H.J., Geers M.G.D. and Kasyanyuk Y.: Continnum modeling of dislocation interactions: Why discreteness matters? \textit{Mat. Sci. Eng. A - Struct.}, \textbf{486} (2008), 653--661.

\bibitem{GPPSMech} Scardia L., Peerlings R.H.J., Peletier M.A. and Geers M.G.D.: Mechanics of dislocation pile-ups: A unification of scaling regimes.
Submitted paper.

\bibitem{SandierSerfaty04} Sandier E. and Serfaty S.: Gamma-convergence of gradient flows with applications to Ginzburg-Landau. \textit{Comm. Pure Appl. Math.},
\textbf{57}/12 (2004), 1627--1672.

\bibitem{SandierSerfaty10TR} Sandier E. and Serfaty S.: From the Ginzburg-Landau model to vortex lattice problems. Arxiv preprint arXiv:1011.4617 (2010).

\bibitem{ZaiserMiguelGroma01} Zaiser M., Miguel M.C. and Groma I.: Statistical dynamics of dislocation systems: The influence of dislocation-dislocation correlations. \textit{Phys. Rev. B}, \textbf{64}/22 (2001).

\end{thebibliography}
\end{document}